\def\epsilon{\varepsilon}
\def\m{\textbf{m}}
\def\e{\textbf{e}}
\def\bu{\textbf{u}}
\def\ba{\textbf{a}}
\def\d{\,\mathrm{d}}
\def\bh{\textbf{h}}
\def\bx{\boldsymbol{x}}
\def\by{\boldsymbol{y}}
\def\bdiv{\mathrm{div}}
\def\ha{\mathbf{h}_{\mathrm{a}}}
\newcommand{\abs}[1]{\lvert#1\rvert}
\crefname{hypothesis}{Hypothesis}{Hypotheses}
\title{Two-scale Analysis for Multiscale Landau-Lifshitz-Gilbert Equation:\\ Theory and Numerical Methods\thanks{Submitted to the editors DATE.
\funding{The work was supported by the National Natural Science Foundation of China (No. 12271409), the Natural Science Foundation of Shanghai (No. 21ZR1465800), the Interdisciplinary Project in Ocean Research of Tongji University and the Fundamental Research Funds for the Central Universities.
}}}
\author{Xiaofei Guan\thanks{School of Mathematical Sciences, Tongji University, Shanghai, 200092, China 
  (\email{guanxf@tongji.edu.cn}, \email{qihang7750@gmail.com}).}
 %(\email{ddoe@imag.com}, \url{http://www.imag.com/\string~ddoe/}).}
\and Hang Qi\footnotemark[2]
\and Zhiwei Sun\thanks{School of Mathematical Sciences, Soochow University, Suzhou, 215006, China 
(\email{sunzhiwei1029@gmail.com}).}}
\begin{document}
\begin{sloppypar}

\maketitle

% REQUIRED
\begin{abstract}
  This paper discusses the theory and numerical method of two-scale analysis for the multiscale Landau-Lifshitz-Gilbert equation in composite ferromagnetic materials. 
  The novelty of this work can be summarized in three aspects:
  Firstly, the more realistic and complex model is considered, including the effects of the exchange field, anisotropy field, stray field, and external magnetic field. The explicit convergence orders in the $H^1$ norm between the classical solution and the two-scale solution are obtained.
  Secondly, we propose a robust numerical framework, which is employed in several comprehensive experiments to validate the convergence results for the Periodic and Neumann problems.
  Thirdly, we design an improved implicit numerical scheme to reduce the required number of iterations and relaxes the constraints on the time step size, which can significantly improve computational efficiency. Specifically, the projection and the expansion methods are given to overcome the inherent non-consistency in the initial data between the multiscale problem and homogenized problem. 
\end{abstract}

% REQUIRED
\begin{keywords}
  multiscale Landau-Lifshitz-Gilbert equation, two-scale analysis, convergence order, improved implicit scheme
\end{keywords}

% REQUIRED
\begin{MSCcodes}
35B27, 65N12, 78M40, 82D40
\end{MSCcodes}

\section{Introduction}

The composite ferromagnetic materials are widely used in practical engineering, such as electric motors \cite{HGSS:JMMM:2006}, sensors \cite{MSLML:SAP:1991}, and data storage devices \cite{VZSN:JAP:2014}, 
due to their ability to achieve physical and chemical properties difficult for homogeneous materials \cite{RSB:CM:2022}.
%because of their intrinsic magnetic orders with bistable structures.
%\textcolor{red}{The composite ferromagnetic materials consist of various ferromagnetic particles, such as iron, nickel, or cobalt, and find widespread applications in numerous fields, including electronic devices \cite{RRSEP:IEEEA:2020}, electric motors \cite{HGSS:JMMM:2006}, transformers \cite{CFSF:SAP:2021}, sensors \cite{MSLML:SAP:1991}, and data storage devices \cite{VZSN:JAP:2014}.}  
The state of composite ferromagnetic materials is defined by the magnetic moment $\mathbf{m}^\varepsilon\in S^2$, which is the sphere-valued vector field, and $\epsilon\ll 1$ represents the spatial period of microstructure. Then, the dynamics of $\mathbf{m}^\varepsilon$ are characterized by the multiscale Landau-Lifshitz-Gilbert (LLG) equation
\begin{equation}\label{multi model}
    \begin{aligned}
        &\partial_t \mathbf{m}^{\varepsilon}-\alpha \mathbf{m}^{\varepsilon} \times \partial_t \mathbf{m}^{\varepsilon} =-\left(1+\alpha^2\right) \mathbf{m}^{\varepsilon} \times \mathbf{h}_{\mathrm{eff}}^{\varepsilon}\left(\mathbf{m}^{\varepsilon}\right).
    \end{aligned}
\end{equation}
Here, $\alpha>0$ denotes the damping constant. 
$\mathbf{h}_{\mathrm{eff}}^{\varepsilon}\left(\mathbf{m}^{\varepsilon}\right)$ is the effective field, which typically includes the exchange field, anisotropy field, stray field, and external magnetic field.
To overcome the difficulties arising from strong degeneracy and nonconvex constraint $|\m|=1$ of LLG equation without considering multiscale characteristics \cite{LL:E:1935}, extensive studies have been conducted on the well-posedness of LLG equation, and the solutions are proved to exist both in the weak sense \cite{FA:NATMA:1992,CEF:MMAS:2011} and in the strong sense \cite{SSB:CMP:1986,CF:DIE:2001,FT:SJMA:2017,CF:CAA:2001,CV:NATMA:2008}.

When delving into the multiscale LLG equation \eqref{multi model}, additional difficulties in analysis and simulation arise from the singular perturbation induced by $\varepsilon$ \cite{DYGC:JCP:2022,LH:JCAM:2024}. 
%To address these problems, one can apply multiscale method to find the homogenized solution, denoted by $\m_0$, which describes the macro behavior of solution to equation \eqref{multi model}. Moreover, an $\epsilon$-scale corrector $\boldsymbol{\chi}\left(\frac{\bx}{\epsilon}\right)$ can be determined, as given in \eqref{first-order corrector}, such that 
To address these problems, one can apply multiscale method to find the approximate solution of \eqref{multi model}

\begin{equation}\label{two scale approximate}
    \mathbf{m}^{\varepsilon}(\bx) \approx \m_0(\bx)+ \epsilon\boldsymbol{\chi}\left(\frac{\bx}{\epsilon}\right)\cdot\nabla\m_0(\bx),
\end{equation}
where $\m_0$ is the homogenized solution, describing the macro behavior of \eqref{multi model}. $\boldsymbol{\chi}\left(\frac{\bx}{\epsilon}\right)$ is the $\epsilon$-scale corrector, as given in \eqref{first-order corrector}. 
Then, heterogeneous multiscale methods (HMM) of multiscale LLG equation \cite{WE:CMS:2003,LR:MMS:2022} have been proposed, which can significantly reduce the computational costs. The works \cite{CMT:DCDS:2018,S:JMAA:2007,AD:PRSMPES:2015,CDMSZ:MMS:2022} have proved the approximation \eqref{two scale approximate} in the sense of two-scale convergence.
%Regarding the theoretical analysis of approximation \eqref{two scale approximate}, it has been proven to hold in the sense of two-scale convergence, as demonstrated in \cite{CMT:DCDS:2018,S:JMAA:2007,AD:PRSMPES:2015,CDMSZ:MMS:2022}. 
%it was well-established in \cite{CMT:DCDS:2018,S:JMAA:2007,AD:PRSMPES:2015,CDMSZ:MMS:2022} that the approximation \eqref{two scale approximate} make sense in the two-scale convergence region. 
Suppose only the exchange field is considered in \eqref{multi model}, and the periodic boundary condition is posed. In this case, it has been proved that the convergence order $O(\epsilon)$ of \eqref{two scale approximate} holds in the $L^2(\Omega)$ sense, and remains uniformly bounded in the $H^1(\Omega)$ sense \cite{LR:CMS:2022}.
%In this work, we consider the full system in \eqref{eqn: Periodic problem} and provide a rigorous deduction of convergence rate in the $H^1(\Omega)$ sense.
%Moreover, the computation of convergence order in $\epsilon$ is of great important for HMM error analysis. In this region, \cite{LR:CMS:2022} provided the first comprehensive discussion on equation \eqref{multi model} with periodic boundary conditions on a square domain, focusing on the exchange field in $\mathbf{h}_{\mathrm{eff}}^{\varepsilon}\left(\mathbf{m}^{\varepsilon}\right)$. It was proven in \cite{LR:CMS:2022} that within $O(1)$ time, the multiscale solution $\mathbf{m}^{\varepsilon}$ exhibit a convergence order of $O(\varepsilon)$ in the $L^2$ norm and remain uniformly bounded in the $H^1$ norm. 
Subsequently, \cite{CLS:apa:2022} considered the effect of both the exchange field and the stray field in $\mathbf{h}_{\mathrm{eff}}^{\varepsilon}\left(\mathbf{m}^{\varepsilon}\right)$ under the Neumann boundary condition. An approximation error in the $H^1$ norm is derived by designing a second-order corrector which satisfies specific geometric constraints. Moreover, it was found that approximation \eqref{two scale approximate} has a convergence deterioration under the Neumann boundary condition. To address this scenario, a more accurate approximation is chosen as 
\begin{equation}\label{Neumann approximate}
\mathbf{m}^{\varepsilon}(\bx) \approx \m_0(\bx)+ \big(\boldsymbol{\Phi}^\varepsilon(\bx) - \bx\big)\cdot\nabla\m_0(\bx),
\end{equation}
where $\boldsymbol{\Phi}^\epsilon$ is the Neumann corrector defined in \eqref{neumann correct}.

In the theoretical part of this work, we extend the analysis in \cite{CLS:apa:2022} to a more realistic model based on Landau and Lifshitz's original consideration \cite{LL:E:1935}. Compared to the simplified model of \eqref{multi model} with only the exchange field, our model further considers the contribution of the anisotropy field, stray field, and external magnetic field in the effective field $\mathbf{h}_{\mathrm{eff}}^{\varepsilon}\left(\mathbf{m}^{\varepsilon}\right)$ of \eqref{effective field}. 
%In the theoretical part of this work, we extend the analysis in \cite{CLS:apa:2022} to a more realistic and complex model of \eqref{multi model}, which contains the contribution of exchange field, anisotropy field, stray field, and external magnetic field in the overall effective field $\mathbf{h}_{\mathrm{eff}}^{\varepsilon}\left(\mathbf{m}^{\varepsilon}\right)$, as given in \eqref{effective field}. This is the original model based on Landau and Lifshitz's consideration\cite{LL:E:1935}. 
%Here, we focus on the periodic boundary problem of \eqref{multi model} and derive the approximation error of \eqref{two scale approximate} in the $H^1$ norm. 
It is worth noting that we focus on the periodic boundary problem of \eqref{multi model} and the corresponding approximation error of \eqref{two scale approximate}. The main difficulties arise from the specific second-order corrector of our multiscale LLG equation. Applying a similar strategy in \cite{CLS:apa:2022}, the total consistency error is decomposed into two parts: the two-scale approximation error and the stray field error. Finally, we derive the approximation error of \eqref{two scale approximate} in the $H^1$ norm. Additionally, a uniform $W^{1,6}$ estimate of \eqref{multi model} is deduced by the inner elliptic estimate and the compactness of the torus.

Although the convergence order of approximation \eqref{two scale approximate}-\eqref{Neumann approximate} have been proved in \cite{CLS:apa:2022}, a gap exists in their numerical simulation.
%Although theoretical results provide the convergence order of approximation \eqref{two scale approximate}-\eqref{Neumann approximate} \cite{CLS:apa:2022}, a gap exists in the literature regarding their numerical simulation.
%It is worth noting that the numerical experiment is necessary since the theoretical analysis relies on specific smoothness assumptions, such as:
It is worth noting that the theoretical analysis usually relies on specific smoothness assumptions, such as:
%On the other hand, we find the lack of simulation of the approximation \eqref{two scale approximate}-\eqref{Neumann approximate} in the current reference. 
%Although the approximation order is derived theoretically, it relies on specific smoothness assumptions, which are not satisfied usually, such as:
\begin{itemize}
    \item The periodic perturbation coefficients in $\mathbf{h}_{\mathrm{eff}}^{\varepsilon}\left(\mathbf{m}^{\varepsilon}\right)$ being $C^2$ continuous.
    \item The boundary of the area $\Omega$ being $C^{1,1}$ continuous.
    \item The initial data being sufficiently smooth.
\end{itemize}
However, it is hard to handle the above assumptions in the numerical simulation.
Several numerical methods have been used to solve the LLG equation \cite{BP:SJNA:2006,YCH:JCP:2021}. Specifically, we refer to the work \cite{BP:SJNA:2006}, which provides an implicit finite element method with unconditional convergence.
When delving into the simulation of the multiscale LLG equation, it becomes computational prohibitive and eventually infeasible to achieve adequate numerical resolution \cite{WFC:CMAME:2023,YC:MMS:2023}. 
%In brief, our proposed numerical framework employs a two-scale method to address the multi model equation. 
%In order to overcome this difficulty, we design an improved scheme utilizing the finite element method for spatial discretization and a modified implicit difference scheme for temporal discretization.
Thus, we design an improved implicit numerical scheme inspired by \cite{BP:SJNA:2006}, where a semi-implicit iteration approach is employed to solve the nonlinear system that results from implicit temporal discretization. In contrast to the implicit treatment in \cite{BP:SJNA:2006}, the improved implicit approach significantly reduces the required number of iterations and relaxes the constraints on the time step size, as shown in \cref{tab: time and niter}.
%, where provide a framework to deal with the multiscale LLG equation.
%Based on this numerical framework, we validate the convergence orders of approximation \eqref{two scale approximate}-\eqref{Neumann approximate}, and compare them with the theoretical results presented in \cite{LR:CMS:2022,CLS:apa:2022} and this work. 
Merging the improved implicit scheme with the two-scale method, a numerical framework is provided to deal with the multiscale LLG equation. 
It is worth noting that the initial data of the multiscale problem \eqref{multi model} and its homogenized problem exhibit inherent non-consistency. Therefore, we propose two viable methods, the projection method and the expansion method, to overcome this difference.
Finally, the convergence orders of both the Periodic and Neumann problems of \eqref{multi model} with complicated effective field $\mathbf{h}_{\mathrm{eff}}^{\varepsilon}\left(\mathbf{m}^{\varepsilon}\right)$
are successfully verified, which implies the robustness of our numerical framework. 

%In the numerical part of this work, we conduct simulation of the approximation  \eqref{two scale approximate}-\eqref{Neumann approximate}, and validate the convergence order given in \cite{LR:CMS:2022,CLS:apa:2022} and this work.

%It is worth noting that, to set the multiscale problem \eqref{multi model} and its homogenized problem comparable, their initial data must be considered carefully. 
%Thus, we propose two viable methods, the projection method and the expansion method, to handle the initial data problem.

%This semi-implicit approach, in contrast to the implicit treatment in \cite{BP:SJNA:2006}, significantly relaxes constraints on the time step size, and leads to a substantial reduction in the number of iterations required.

This paper is organized as follows. In \cref{sec2}, the homogenized model and correctors of multiscale LLG equation are derived by the two-scale method. Moreover, the boundary and initial setting of the \eqref{multi model}, together with the numerical framework are given. In \cref{sec3}, we present 
the convergence results of both the Periodic and Neumann problems under approximation \eqref{two scale approximate}-\eqref{Neumann approximate}.
%the convergence results of approximation \eqref{two scale approximate}-\eqref{Neumann approximate}, considering both the Periodic and Neumann problems. 
In \cref{sec4}, numerical results are conducted to validate the error estimates presented in the previous section, which imply the effectiveness and robustness of the proposed framework. Finally, conclusions are given in \cref{sec5}.

\section{Two-scale analysis and numerical discretization}\label{sec2}

In this work, we consider the multiscale LLG equation \eqref{multi model}, with the effective field $\mathbf{h}_{\mathrm{eff}}^{\varepsilon}$ %consisting of the exchange field, anisotropy field, stray field, and external magnetic field, as given by
%In this work, we consider the magnetic material occupied on open bounded region $\Omega \subset \mathbb{R}^n$ for $n=2$ and $n=3$. The magnetic moment $\m^\epsilon:\Omega \rightarrow S^2$ satisfies the multiscale LLG equation \eqref{multi model}, with the effective field  $\mathbf{h}_{\mathrm{eff}}^{\varepsilon}$ consisting of the exchange field, anisotropy field, stray field, and external magnetic field, as given by
\begin{equation}\label{effective field}
	\mathbf{h}_{\mathrm{eff}}^{\varepsilon}\left(\mathbf{m}^{\varepsilon}\right)
	=
	\mathrm{div}\left(\ba^{\varepsilon} \nabla \mathbf{m}^{\varepsilon}\right)-K^{\varepsilon}\left(\mathbf{m}^{\varepsilon} \cdot \mathbf{u}\right) \mathbf{u}+M^\varepsilon\mathbf{h}_{\mathrm{a}}
	+\mu^\varepsilon \mathbf{h}_{\mathrm{d}}\left[M^\varepsilon \mathbf{m}^{\varepsilon}\right],
\end{equation}
where $\m^\epsilon:\Omega \rightarrow S^2$ is the magnetic moment, and $\Omega \subset \mathbb{R}^n\ (n=2,3)$ is the open bounded domain. The dominant second-order term  $\operatorname{div}\left(\ba^{\varepsilon} \nabla \mathbf{m}^{\varepsilon}\right)$ represents the exchange field, where $\ba^\varepsilon$ is the exchange coefficient. 
$K^\varepsilon$ is the material anisotropy coefficient, and $\mathbf{u}$ denotes the easy-axis direction.  $\mathbf{h}_{\mathrm{a}}$ represents the external magnetic field, and $M^\varepsilon$ is the magnetization magnitude.
In the last term, $\mu^\varepsilon$ is the material coefficient, and the stray field $\mathbf{h}_{\mathrm{d}}\left[\mathbf{m}\right]$ denotes a non-local term derived from the static Maxwell equation. 
When $n=3$, for the open bounded domain $\Omega$ with Lipschitz boundary, the stray field can be generated by the potential function $U^\epsilon$
%which can be determined by potential function $U^\epsilon$ when $n=3$, as given by
\begin{equation}\label{stray field 3D}
	\left\{\begin{aligned}
		&\mathbf{h}_{\mathrm{d}}\left[M^\varepsilon \mathbf{m}^{\varepsilon}\right]
		=
		\nabla U^\varepsilon,\\
		&\Delta U^\varepsilon
		=-
		\mathrm{div}\left(M^\varepsilon \mathbf{m}^\varepsilon \cdot \mathcal{X}_{\Omega}\right), \quad \text { in } D^{\prime}\left(\mathbb{R}^3\right),
	\end{aligned}\right.
	\qquad \text{Stray Field in 3D}.
\end{equation}
Here, $M^\varepsilon \mathbf{m}^\varepsilon \cdot \mathcal{X}_{\Omega}$ represents the extension of $M^\varepsilon \mathbf{m}$ to $\mathbb{R}^3$, which becomes zero outside $\Omega$. $D^{\prime}\left(\mathbb{R}^3\right)$ denotes a distribution space, which consists of 
all continuous linear functional of space $C^{\infty}_0(\mathbb{R}^3)$.
%The notation $D^{\prime}\left(\mathbb{R}^3\right)$ denotes the space of distribution. 
%Moreover, when considering the magnetization on the thin film, and letting the dimension along $e_3$ direction disappear, it was proven in \cite{GJ:PRSLSMPES:1997} that the stray field degenerates into the simple case:
When considering the stray field on the thin film (n=2), the magnetization component along $\mathbf{e}_3$ direction disappears. In this case, it has been proved that the stray field degenerates into the simple form \cite{GJ:PRSLSMPES:1997}
%Moreover, if we consider $n=2$ and $\Omega$ represents thin film material, then it was proven in [cite] that stray field degenerated into the form of
\begin{equation}\label{stray field 2D}
	\mathbf{h}_{\mathrm{d}}\left[M^\varepsilon \mathbf{m}^{\varepsilon}\right]
	=
	M^\varepsilon \left(\mathbf{m}^{\varepsilon}\cdot \mathbf{e}_3 \right)\mathbf{e}_3,
	\qquad \text{Stray Field in 2D}.
\end{equation}

%the magnetization $\mathbf{m} \in L^p\left(\Omega, \mathbb{R}^3\right)$ generates a stray field satisfying

%where the potential function $U^\epsilon$ solves
%\begin{equation}\label{eq2.4}
%    \Delta U^\varepsilon=-\operatorname{div}\left(M_s \mathbf{m}^\varepsilon \cdot \mathcal{X}_{\Omega}\right), \quad \text { in } D^{\prime}\left(\mathbb{R}^3\right).
%\end{equation}

Throughout this article, we focus on composite ferromagnetic materials exhibiting the $\varepsilon-$periodic structure. In other words, material coefficients satisfy the following periodic perturbation assumption
\begin{equation}\label{periodic perturbation assumption}
	M^{\varepsilon}=M_s\left(\frac{\mathbf{x}}{\varepsilon}\right), \quad \mu^{\varepsilon}=\mu\left(\frac{\mathbf{x}}{\varepsilon}\right), \quad \text { and } \quad \mathbf{a}^{\varepsilon}=\mathbf{a}\left(\frac{\mathbf{x}}{\varepsilon}\right).
\end{equation}
Here, $M_s(\by)$, $\mu(\by)$ and $ \mathbf{a}(\by)$ are periodic functions over the square domain $Y=[0,1]^n$.

%In the field of micromagnetics, researchers also pay significant attention to the 2D multi model model, which is commonly employed to describe the dynamic behavior of magnetic materials on the thin film materials. In this case, the most significant distinction from the 3D model lies in the simplification of the stray field, which reduces to a linear term. Neglecting the multiscale coefficients, one can express the effective field acting on thin film materials as follows
%\begin{equation}
%h_{\text{eff}}^{\varepsilon}\left(\mathbf{m}^{\varepsilon}\right)=\operatorname{div}\left(a^{\varepsilon} \nabla \mathbf{m}^{\varepsilon}\right)-K^{\varepsilon}\left(\mathbf{m}^{\varepsilon} \cdot \mathbf{u}\right) \mathbf{u}+M_s\mathbf{h}_{\mathrm{a}}
%    +\mu_0 M_s^2 \left(\mathbf{m}^{\varepsilon}\cdot \mathbf{e}_3 \right)\mathbf{e}_3
%\end{equation}
%The multi model equation in this case is given by:
%\begin{equation}
%    \partial_t \mathbf{m}^{\varepsilon} - \alpha \mathbf{m}^{\varepsilon} \times \partial_t \mathbf{m}^{\varepsilon} = -\left(1+\alpha^2\right) \mathbf{m}^{\varepsilon} \times \left[\operatorname{div}\left(a^{\varepsilon} \nabla \mathbf{m}^{\varepsilon}\right) - \mu^{\varepsilon}\left(\mathbf{m}^{\varepsilon}\cdot \mathbf{e}_3\right)\mathbf{e}_3\right]
%\end{equation}
%Here, $\mu^{\varepsilon}$ is a positive definite multiscale coefficient.

\subsection{Two-scale method}
%To address the computational burden of multiscale modeling, the two-scale method is employed to splits \eqref{multi model} into macroscopic and microscopic scales.
In order to address the computational burden of multiscale modeling, the two-scale method is employed to obtain the macroscale and microscale problems corresponding to \eqref{multi model}.
Moreover, the variable separation method is applied to find the second-order corrector, which satisfies specific geometric constraint. This suitable second-order corrector enables us to conduct the rigorous error analysis in \cref{sec3}.
\subsubsection{Homogenized equation in 3D case}
%\noindent\textbf{a. Homogenized equation for 3D model.}
For the 3D case, we rewrite \eqref{multi model}, \eqref{effective field}-\eqref{stray field 2D} into a parabolic-elliptic type system
\begin{equation}\label{rewrite LLG}
	\left\{\begin{aligned}
		\partial_t \mathbf{m}^{\varepsilon}
		=& 
		\alpha \mathbf{m}^{\varepsilon} \times \partial_t \mathbf{m}^{\varepsilon} \\
		& -\left(1+\alpha^2\right) \mathbf{m}^{\varepsilon} \times
		\big[ \mathrm{div}\left(\ba^{\varepsilon} \nabla \mathbf{m}^{\varepsilon}\right)-K^{\varepsilon}\left(\mathbf{m}^{\varepsilon} \cdot \mathbf{u}\right) \mathbf{u} + M^\epsilon \mathbf{h}_{\mathrm{a}}
		+\mu^\epsilon \nabla U^\varepsilon \big],\\
		\Delta U^\varepsilon
		=& -
		\mathrm{div}\left(M^\epsilon \mathbf{m}^\varepsilon \cdot \mathcal{X}_{\Omega}\right), \qquad \text { in } D^{\prime}\left(\mathbb{R}^3\right).
	\end{aligned}\right.
\end{equation}
To utilize the two-scale method, the approximating solution $\widetilde{\m}^\epsilon(\bx)$ of $\m^\epsilon$ can be defined as
\begin{equation}\label{def two scale approximate sln}
	\left\{\begin{aligned}
		\widetilde{\m}^\epsilon(\bx) =& \m_0\left(\bx, \frac{\bx}{\epsilon}\right) + \epsilon \m_1\left(\bx,\frac{\bx}{\epsilon}\right) + \epsilon^2\m_2\left(\bx,\frac{\bx}{\epsilon}\right),\\
		\widetilde{U}^\epsilon(\bx) =& U_0\left(\bx, \frac{\bx}{\epsilon}\right) + \epsilon U_1\left(\bx,\frac{\bx}{\epsilon}\right).
	\end{aligned}\right.
\end{equation}
Here, $\m_i(\bx,\by)\ (i=0,1,2)$ is defined over $\Omega\times Y$, and $U_i(\bx,\by)\ (i=0,1)$ is defined over $\mathbb{R}^3\times Y$. We assume that $\m_i(\bx,\by)$ and $U_i(\bx,\by)$ are $Y$-periodic with respect to $\by$. Furthermore, $\m_i$ satisfies the geometric ansatz
\begin{equation}\label{geo property}
	\abs{\m_0} = 1,\quad
	\m_0\cdot\m_1 = 0,
	\quad\mbox{and}\quad
	\m_0\cdot\m_2 = -\frac{1}{2}\abs{\m_1}^2.
\end{equation}
This ansatz constrains the approximating solution $\widetilde{\m}^\epsilon(\bx)$ such that $\abs{\widetilde{\m}^\epsilon(\bx)}\approx 1$. Let us denote the fast variable $\boldsymbol{y}=\frac{\boldsymbol{x}}{\varepsilon}$ and recall the chain rule
%With the fast variable $\boldsymbol{y}=\frac{\boldsymbol{x}}{\varepsilon}$ and the chain rule, we have:
\begin{equation}
    \nabla \mathbf{m}\left(\boldsymbol{x}, \frac{\boldsymbol{x}}{\varepsilon}\right)=\left(\nabla_{\boldsymbol{x}}+\varepsilon^{-1} \nabla_{\boldsymbol{y}}\right) \mathbf{m}(\boldsymbol{x}, \boldsymbol{y}).
\end{equation}
Then, let $\mathcal{A}_{\varepsilon}=\operatorname{div}\left(a^{\varepsilon} \nabla\right)$, we have
\begin{equation}\label{twoscale-expansion}
    \mathcal{A}_{\varepsilon} \mathbf{m}\left(\boldsymbol{x}, \frac{\boldsymbol{x}}{\varepsilon}\right)=\left(\varepsilon^{-2} \mathcal{A}_0+\varepsilon^{-1} \mathcal{A}_1+\mathcal{A}_2\right) \mathbf{m}(\boldsymbol{x}, \boldsymbol{y}),
\end{equation}
where 
\begin{equation}
    \left\{\begin{aligned}
        &\mathcal{A}_0=\mathrm{div}_{\boldsymbol{y}}\big(\ba(\boldsymbol{y}) \nabla_{\boldsymbol{y}}\big), \\
        &\mathcal{A}_1=\mathrm{div}_{\boldsymbol{x}}\big(\ba(\boldsymbol{y}) \nabla_{\boldsymbol{y}}\big)+\mathrm{div}_{\boldsymbol{y}}\big(\ba(\boldsymbol{y}) \nabla_{\boldsymbol{x}}\big), \\
        &\mathcal{A}_2=\mathrm{div}_{\boldsymbol{x}}\big(\ba(\boldsymbol{y}) \nabla_{\boldsymbol{x}}\big) .
    \end{aligned}\right.
\end{equation}
Substituting \eqref{twoscale-expansion} into \eqref{rewrite LLG}, the $\epsilon^{-2}$-order equation can be written as
%Now, let us substitute \eqref{twoscale-expansion} into \eqref{rewrite LLG}; with the above notations, the $\epsilon^{-2}$-order equation reads as:
\begin{equation*}\label{eqn: f-2}
		\m_0\times \mathcal{A}_0 \m_0 =0,\qquad
		\Delta_{\by} U_0 = 0.
\end{equation*}
The equation can be easily checked by setting 
$\m_{0}(\bx,\by) = \m_{0}(\bx)$ and $U_{0}(\bx,\by) = U_{0}(\bx)$.
%It can be easily checked the equation by setting $\m_{0}(\bx,\by) = \m_{0}(\bx)$ and $U_{0}(\bx,\by) = U_{0}(\bx)$. 
%\begin{equation}
%	\m_{0}(\bx,\by) = \m_{0}(\bx), \qquad U_{0}(\bx,\by) = U_{0}(\bx).
%\end{equation}
In fact, one can prove the uniqueness of the solution (up to a constant) by the Lax-Milgram Theorem and the same argument as in \cite{CDMSZ:MMS:2022}. 
Using the above results, the $\varepsilon^{-1}$-order equation can be given as  
%Using this result to calculate the $\epsilon^{-1}$-order equation, we then have 
\begin{equation*}
	\left\{\begin{aligned}
		& \m_0\times \big( \mathcal{A}_1 \m_0 +  \mathcal{A}_0 \m_1\big) =0 ,\\
		& \Delta_{\by} U_1(\bx,\by) + \nabla_{\by} M_s(\by)\cdot \big(\m_0(\bx)\mathcal{X}_{\Omega}(\bx)\big) = 0.
	\end{aligned}\right.
\end{equation*}
Here, the first-order correctors $\m_1$ and $U_1$ are defined as
%The above equations hold when we define the first-order corrector $\m_1$, $U_1$ as
\begin{equation}\label{first-order corrector}
	\m_1(\bx,\by) = \sum_{j= 1}^3 \chi_j(\by)\frac{\partial}{\partial x_j}\m_0(\bx),
	\qquad 
	U_1(\bx,\by) = \textbf{u} (\by) \cdot \m_0 (\bx)\mathcal{X}_{\Omega}(\bx),
\end{equation}
where $\chi_i(\by)$, $i=1,2, 3$, and $\textbf{u}(\by)=\nabla U^*(\by)$ are the auxiliary functions satisfying the first-order cell problem
\begin{equation}\label{first-order cell problem}
	\left\{ \begin{aligned}
		& \bdiv \big( \ba(\by)\nabla \chi_j(\by)\big) = - \sum_{i= 1}^3 \frac{\partial}{\partial y_i} a_{ij}(\by),\quad \chi_j \mbox{ is $Y$-periodic},\\
		& \Delta U^*(\by) = - M_s(\by)+M^0, \quad  U^* \mbox{ is $Y$-periodic},
		\quad  M^0 = \int_{Y} M_s(\by) \d \by.
	\end{aligned} \right.
\end{equation}
Furthermore, the property $\m_0 \cdot\m_1 = 0$ holds under the assumption $\abs{\m_0} = 1$.
Now, let us denote
\begin{equation}\label{define h}
	\begin{aligned}
		\bh(\bx,\by) &= \mathcal{A}_1 \m_1 +  \mathcal{A}_2 \m_0 + \mu\nabla_{\bx} U_0 + \mu\nabla_{\by} U_1 - K\left(\mathbf{m}_0 \cdot \mathbf{u}\right) \mathbf{u} + M_s \mathbf{h}_{\mathrm{a}},
	\end{aligned}
\end{equation}
the $\epsilon^{0}$-order equation can be given as
\begin{equation}\label{order of 1}
	\left\{\begin{aligned}
		& \partial_t \m_0 - \alpha \m_0\times \partial_t \m_0
		= - (1 + \alpha^2) \m_0 \times \big\{ \mathcal{A}_0 \m_2 + \bh(\bx,\by) \big\},\\
		& \Delta_{\bx} U_0
		+ 2\mathrm{div}_{\by} (\nabla_{\bx} U_1)
		+ M_s \mathrm{div}_{\bx}\big(\m_0 \mathcal{X}_{\Omega}(\bx) \big)
		+ \mathrm{div}_{\by} \big(M_s \m_1 \mathcal{X}_{\Omega}(\bx)\big) = 0.
	\end{aligned}\right.
\end{equation}
Taking integration of \eqref{order of 1} with respect to $\by$ over $Y$,  the corresponding homogenized LLG equation can be obtained as
\begin{equation}\label{eqn:homogenized LLG system short}
	\left\{\begin{aligned}
		& \partial_t\m_0 - \alpha \m_0 \times \partial_t\m_0
		= 
		- \left(1+\alpha^2\right)\m_0 \times \bh_{\mathrm{eff}}^0,\\
		& \Delta U_0(\bx) = - M^0 \mathrm{div}_{\bx}\big(\m_0 \mathcal{X}_{\Omega}(\bx) \big),
	\end{aligned}\right.
\end{equation}
with $\bh_{\mathrm{eff}}^0(\bx) = \int_Y \bh(\bx,\by) \d \by$, which can be computed by
\begin{equation}\label{homogenized effective field in 3d}
	\bh_{\mathrm{eff}}^0
	= \mathrm{div}\left( \ba^0 \nabla \m_0 \right) 
	+ \mu^0 \nabla U^0 + \mathbf{m}_0 \cdot \mathbf{H}^0_{\mathrm{d}}
	- K^0 \left(\mathbf{m}_0 \cdot \mathbf{u}\right) \mathbf{u} 
	+ M^0 \mathbf{h}_{\mathrm{a}}.
\end{equation}
Here, the homogenized coefficients $\mathbf{a}^0 = \{a^0_{ij}\}_{1\le i,j\le3}$, $\mu^0$, $K^0$ and matrix $\mathbf{H}_{\mathrm{d}}^0$ are given by
\begin{equation}\label{homogenized coefficient}
	\left\{\begin{aligned}
		&a^0_{ij} = \int_Y\bigg( a_{ij} + \sum_{k= 1}^3 a_{ik}\frac{\partial \chi_j}{\partial y_k} \bigg) \d \boldsymbol{y},\qquad
		\mu^0 = \int_Y \mu(\boldsymbol{y}) \d \boldsymbol{y},\\
		&K^0 =
		\int_{Y}K(\mathbf{y}) \d \boldsymbol{y},
		\qquad \mathbf{H}_{\mathrm{d}}^0 =
		\int_{Y} \mu(\mathbf{y}) \nabla \bu(\by) \d \boldsymbol{y}.
	\end{aligned}\right.
\end{equation}

%\noindent\textbf{b. Second-order corrector for 3D model.}
The second-order corrector $\m_2(\bx,\by)$ are given as follows. Combining \eqref{order of 1} with the homogenized equation \eqref{eqn:homogenized LLG system short}, it yields
\begin{equation}\label{Orthometric direction of A0m2}
	\m_0 \times \big\{ \mathcal{A}_0 \m_2 + \bh(\bx,\by) - \bh_{\mathrm{eff}}^0(\bx)\big\} = 0.
\end{equation}
Under the ansatz $\m_2 \cdot \m_0 =-\frac{1}{2} \abs{\m_1}^2$ in \eqref{geo property}, we have
\begin{equation}\label{eqn:substituting of m2}
	\m_0\cdot \mathcal{A}_0 \m_2
	= - \ba(\by) \nabla_{\by}\m_1 \cdot \nabla_y \m_1 - \m_1\cdot \mathcal{A}_0 \m_1.
\end{equation}
Using the orthogonal decomposition of $\mathcal{A}_0 \m_2$, together with \eqref{Orthometric direction of A0m2} and \eqref{eqn:substituting of m2}, it yields 
\begin{equation}\label{eqn:system m2 form3}
	\begin{aligned} 
		\mathcal{A}_0 \m_2 =& 
		- \m_0 \times \left( \m_0 \times  \mathcal{A}_0 \m_2 \right)  
		+ \left(\m_0\cdot \mathcal{A}_0 \m_2\right) \m_0\\
		=& - \left( \bh(\bx,\by) - \bh_{\mathrm{eff}}^0(\bx) \right)
		+
		\big\{ \m_0 \cdot \big( \bh(\bx,\by) - \bh_{\mathrm{eff}}^0(\bx)\big) \big\} \m_0\\
		& - \big( \m_1\cdot \mathcal{A}_0 \m_1 
		+ \ba(\by) \nabla_{\by}\m_1 \cdot \nabla_{\by} \m_1 \big) \m_0.
	\end{aligned}
\end{equation}
By the variable separation method, the solution of \eqref{eqn:system m2 form3} is given by
	\begin{equation}\label{second-order corrector}
 \begin{aligned}
		\m_2(\bx, \by) = &\bdiv_{\bx}\big(\boldsymbol{\theta}(\by) \nabla_{\bx} \m_0(\bx)\big)
		+ \Big\{\big(\boldsymbol{\theta}(\boldsymbol{y})+\frac{1}{2} \boldsymbol{\chi}(\boldsymbol{y}) \otimes \boldsymbol{\chi}(\boldsymbol{y})\big)\left|\nabla \mathbf{m}_0(\boldsymbol{x})\right|^2\Big\} \mathbf{m}_0(\boldsymbol{x})\\
		& + \big( I - \m_0(\bx)\otimes \m_0(\bx) \big)\mathcal{T}_{\mathrm{low}}(\bx, \by),
  \end{aligned}
	\end{equation}
 where $I$ is the identity transformation, $\otimes$ denotes the tensor product, and the low-order term $\mathcal{T}_{\mathrm{low}}$ is given by
 	\begin{equation*}
		\mathcal{T}_{\mathrm{low}}(\bx, \by) = 
		- \kappa(\by) (\m_0(\bx)\cdot \bu)\bu
		+
		\rho(\by) \nabla_{\boldsymbol{x}} U_0(\boldsymbol{x})
		+ \m_0(\bx)\cdot \boldsymbol{\Lambda}(\by)
		+ U^*(\by) \ha.
	\end{equation*}
%\begin{equation*}
%	\begin{aligned}
%		\m_2 =& \bdiv_{\bx} \big(\boldsymbol{\theta}(\by) \nabla_{\bx} \m_0 \big)
%		+ \big\{(\boldsymbol{\theta} + \small\frac{1}{2}\boldsymbol{\chi} \otimes\boldsymbol{\chi} ) \abs{\nabla\m_0}^2\big\} \m_0\\
%		&+ \rho\nabla_{\bx} U_0 + \m_0\cdot \boldsymbol{\Lambda} 
%		- \big(\rho (\m_0\cdot \nabla_{\bx}) U_0 + \boldsymbol{\Lambda}  \abs{\m_0}^2\big)\m_0,
%	\end{aligned}
%\end{equation*}
Here, $U^*(\by)$ is defined in \eqref{first-order cell problem}. The auxiliary functions $\boldsymbol{\theta} = \{\theta_{ij}\}_{1\le i,j\le3}$,  $\rho$, $\boldsymbol{\Lambda}$, $\kappa$ are given by the second-order cell problem
\begin{equation}\label{second-order cell problem}
	\left\{ \begin{aligned}
		&\mathcal{A}_0\theta_{ij} = 
		a^0_{ij} - \bigg(a_{ij} + \sum_{k=1}^3 a_{ik}\frac{\partial\chi_j}{\partial y_k}\bigg)
		- \sum_{k=1}^3 \frac{\partial (a_{ik}\chi_j)}{\partial y_k}
		,\\
		&\mathcal{A}_0 \rho  = \mu (\by)-\mu^0,\quad
		\mathcal{A}_0 \boldsymbol{\Lambda} =
		\mu(\by) \nabla \bu(\by) - \mathbf{H}_{\mathrm{d}}^0,\\
  & \mathcal{A}_0\kappa = K(\by)-K^0,\\
		&\boldsymbol{\theta},\ \rho, \ \boldsymbol{\Lambda},\ \kappa, \quad \mbox{are $Y$-periodic}.
	\end{aligned} \right.
\end{equation}

%\noindent\textbf{c. 2D model.}
\subsubsection{Homogenized equation in 2D case}
For the 2D case, we can apply the same argument for the multiscale equation \eqref{multi model}, \eqref{effective field} with the degenerated stray field \eqref{stray field 2D}. The homogenized equation reads as
\begin{equation*}
	\partial_t\m_0 - \alpha \m_0 \times \partial_t\m_0
	= 
	- \left(1+\alpha^2\right)\m_0 \times \bh_{\mathrm{eff}}^0,
\end{equation*}
where the homogenized effective field $\bh_{\mathrm{eff}}^0$ is given by
\begin{equation}\label{eqn:homogenized LLG system n=2}
	\begin{aligned}
		\bh_{\mathrm{eff}}^0
		=
		\mathrm{div}\left( \ba^0 \nabla \m_0 \right) 
		+ \widetilde{M}^0 \left(\mathbf{m}^{\varepsilon}\cdot \mathbf{e}_3 \right)\mathbf{e}_3
		- K^0 \left(\mathbf{m}_0 \cdot \mathbf{u}\right) \mathbf{u} 
		+ M^0 \mathbf{h}_{\mathrm{a}},
	\end{aligned}
\end{equation}
with $\mathbf{a}^0 = \{a^0_{ij}\}_{1\le i,j\le3}$, $\mu^0$, $K^0$ defined in \eqref{homogenized coefficient}, and $\widetilde{M}^0$ is given by
\begin{equation*}
	\widetilde{M}^0
	= \int_Y \mu(\boldsymbol{y}) M_s(\by) \d \boldsymbol{y}.
\end{equation*}
Moreover, the second order corrector can also be written in the form of \eqref{second-order corrector}, with the low-order term $\mathcal{T}_{\mathrm{low}}(\bx, \by)$ given by
\begin{equation*}
  \begin{aligned}
		\mathcal{T}_{\mathrm{low}}(\bx, \by) = &
		- \kappa(\by) \big(\m_0(\bx)\cdot \bu\big)\bu
  + \beta(\by) \big(\m_0(\bx)\cdot  \mathbf{e}_3\big) \mathbf{e}_3
		+
		\rho(\by) \nabla_{\boldsymbol{x}} U_0(\boldsymbol{x})\\
		& + \m_0(\bx)\cdot \boldsymbol{\Lambda}(\by)
		+ U^*(\by) \ha.
  \end{aligned}
\end{equation*}
Here, $\beta(\by)$ is a $Y$-periodic function satisfying the second-order cell problem
\begin{equation*}
    \mathcal{A}_0\beta(\by) = 
    \mu(\boldsymbol{y}) M_s(\by) - \widetilde{M}^0.
\end{equation*}

\subsection{Boundary value problems}
In this work, we consider two types of the boundary value problem: the Periodic problem and the Neumann problem. First, for $\Omega = [0,1]^n$, the multiscale problem \eqref{multi model} with periodic boundary setting is defined as
%the we introduce the periodic boundary setting for \eqref{multi model}. For a square  $\Omega = [0,1]^n$, the multiscale problem is given by:
\begin{equation}\label{eqn: Periodic problem}
	\left\{\begin{aligned}
		&\partial_t \mathbf{m}^{\varepsilon}-\alpha \mathbf{m}^{\varepsilon} \times \partial_t \mathbf{m}^{\varepsilon}
		=-\left(1+\alpha^2\right) \m^\epsilon \times \mathbf{h}_{\mathrm{eff}}^{\varepsilon}\left(\mathbf{m}^{\varepsilon}\right)
		\quad \text {in } \Omega, \\
		&\mathbf{m}^{\varepsilon}(\boldsymbol{x}, 0) =\mathbf{m}_{\mathrm{init}}^{\varepsilon}(\boldsymbol{x}), \quad\left|\mathbf{m}_{\mathrm{init}}^{\varepsilon}(\boldsymbol{x})\right|=1 \quad \text {in } \Omega,\\
		&\mathbf{m}^{\varepsilon}(\boldsymbol{x}, t),\quad \mathbf{m}_{\mathrm{init}}^{\varepsilon}(\boldsymbol{x})  \quad \text {are periodic on } \partial \Omega ,
	\end{aligned}\right.
\end{equation}
where $\mathbf{m}_{\mathrm{init}}^{\varepsilon}(\boldsymbol{x})$ is initial condition, and the effective field $\mathbf{h}_{\mathrm{eff}}^{\varepsilon}\left(\mathbf{m}^{\varepsilon}\right)$ is given in \eqref{effective field}-\eqref{stray field 2D}.
The corresponding homogenized problem with periodic boundary can be written as
\begin{equation}\label{homogenized model periodic}
	\left\{\begin{aligned}
		&\partial_t \mathbf{m}_0-\alpha \mathbf{m}_0 \times \partial_t \mathbf{m}_0=-\left(1+\alpha^2\right) \mathbf{m}_0 \times \mathbf{h}_{\mathrm{eff}}^0\left(\mathbf{m}_0\right) \quad \text {in } \Omega, \\
		&\mathbf{m}_0(\boldsymbol{x}, 0) =\mathbf{m}_{\mathrm{init}}^0(\boldsymbol{x}), \quad\left|\mathbf{m}_{\mathrm{init}}^0(\boldsymbol{x})\right|=1 \quad \text {in } \Omega,\\
		&\mathbf{m}_0(\boldsymbol{x}, t),\quad \mathbf{m}_{\mathrm{init}}^0(\boldsymbol{x})  \quad \text {are periodic on } \partial \Omega ,
	\end{aligned}\right.
\end{equation}
where $\mathbf{m}_{\mathrm{init}}^0(\boldsymbol{x})$ is the initial condition, and the homogenized effective field $\mathbf{h}_{\mathrm{eff}}^0\left(\mathbf{m}_0\right)$ is defined in \eqref{homogenized effective field in 3d}, \eqref{eqn:homogenized LLG system n=2}.
The multiscale solution $\m^\epsilon$ can be approximated by $\m_0$  of \eqref{homogenized model periodic} and the first-order corrector $\m^1$ of \eqref{first-order corrector} in the following form
%We expect that $\m^0$ defined in \eqref{homogenized model periodic} characterizes the macroscale evolution for the multiscale solution $\m^\epsilon$ of \eqref{eqn: Periodic problem}, and the first-order corrector $\m^1$ in \eqref{first-order corrector} captures the microscale disturbation of $\m^\epsilon$, namely:
\begin{equation}\label{approximation of periodic case}
	\m^\epsilon(\bx) \approx \m_0(\bx) + \varepsilon\boldsymbol{\chi}\left(\frac{\bx}{\epsilon}\right) \nabla\mathbf{m}_0(\bx).
\end{equation}
%For the homogenization limit form \eqref{eqn: Periodic problem} to \eqref{homogenized model periodic},
The two-scale convergence of approximation \eqref{approximation of periodic case} has been proved in \cite{S:JMAA:2007,AD:PRSMPES:2015,CMT:DCDS:2018,CDMSZ:MMS:2022}.
%It was well-established in \cite{CMT:DCDS:2018,S:JMAA:2007,AD:PRSMPES:2015,CDMSZ:MMS:2022} that the approximation \eqref{approximation of periodic case} make sense in the regime of two-scale convergence. 
Suppose only the exchange field is considered in \eqref{eqn: Periodic problem}, and $\m^\epsilon$ satisfies the certain boundedness assumption. In this case, it has been proved that \eqref{approximation of periodic case} has the convergence order $O(\epsilon)$ in the $L^2(\Omega)$ sense and remains uniformly bounded in the $H^1(\Omega)$ sense \cite{LR:CMS:2022}.
%In this case, it has been proven that the convergence order $O(\epsilon)$ holds in the $L^2(\Omega)$ sense, and remains uniformly bounded in the $H^1(\Omega)$ sense \cite{LR:CMS:2022}.
%Moreover, if only the exchange field is considered in \eqref{eqn: Periodic problem}, under certain regularity assumption of $\m^\epsilon$, it is proved in \cite{LR:CMS:2022} that \eqref{approximation of periodic case} holds in the $L^2(\Omega)$ sense with the convergence order of $O(\epsilon)$, and remain uniformly bounded in the $H^1(\Omega)$ sense.

On the other hand, for an open bounded domain $\Omega$, the multiscale problem \eqref{multi model} with Neumann boundary setting is defined as
%On the other hand, we also introduce the Neumann problem of \eqref{multi model}. Given an open area $\Omega$, we have: 
\begin{equation}\label{neumann}
	\left\{\begin{aligned}
		&\partial_t \mathbf{m}^{\varepsilon}-\alpha \mathbf{m}^{\varepsilon} \times \partial_t \mathbf{m}^{\varepsilon} =-\left(1+\alpha^2\right) \mathbf{m}^{\varepsilon} \times h_{\mathrm{eff}}^{\varepsilon}\left(\mathbf{m}^{\varepsilon}\right) \quad \text {in } \Omega,\\
		&\mathbf{m}^{\varepsilon}(\boldsymbol{x}, 0) =\mathbf{m}_{\mathrm{init}}^{\varepsilon}(\boldsymbol{x}), \quad\left|\mathbf{m}_{\mathrm{init}}^{\varepsilon}(\boldsymbol{x})\right|=1 \quad \text {in } \Omega,\\
		&\boldsymbol{\nu} \cdot \ba^{\varepsilon} \nabla \mathbf{m}^{\varepsilon}(\boldsymbol{x}, t) =0, \quad \text {on } \partial \Omega \times[0, T], \\
		&\boldsymbol{\nu} \cdot \ba^{\varepsilon} \nabla \mathbf{m}_{\mathrm{init}}^{\varepsilon}(\boldsymbol{x}) =0, \quad \text {on } \partial \Omega, \\
	\end{aligned}\right.
\end{equation}
where $\boldsymbol{\nu}$ represents the unit outer normal vector. The corresponding homogenized Neumann problem is given by
%\vspace{-2mm}
\begin{equation}\label{homogenized problem Neumann}
	\left\{\begin{aligned}
		&\partial_t \mathbf{m}_0-\alpha \mathbf{m}_0 \times \partial_t \mathbf{m}_0=-\left(1+\alpha^2\right) \mathbf{m}_0 \times h_{\mathrm{eff}}^{0}\left(\mathbf{m}_0\right) \quad \text { in } \Omega, \\
		&\mathbf{m}_{0}(\boldsymbol{x}, 0)=\mathbf{m}_{\mathrm{init}}^{0}(\boldsymbol{x}), \quad\left|\mathbf{m}_{\mathrm{init}}^{0}(x)\right|=1 \quad \text { in } \Omega,\\
		&\boldsymbol{\nu} \cdot \ba^{0} \nabla \mathbf{m}_0(\boldsymbol{x}, t) = 0, \quad \text { on } \partial \Omega \times[0, T], \\
		&\boldsymbol{\nu} \cdot \ba^{0} \nabla \mathbf{m}_{\mathrm{init}}^{0}(\boldsymbol{x})=0, \quad \text { on } \partial \Omega. \\
	\end{aligned}\right.
\end{equation}
For the Neumann problem, the convergence order in the $H^1$ sense of \eqref{approximation of periodic case} has been derived without  considering the boundary layer effect \cite{CLS:apa:2022}.
%that the approximation \eqref{approximation of periodic case} holds in the $H^1$ sense, but with a loss in convergence order caused by the boundary layer effect \cite{CLS:apa:2022}.
%For the Neumann case, it was proved in [cite] that
%\begin{equation*}
%	\begin{gathered}
%		\big\Vert \m^\epsilon - \m^0 - \varepsilon\boldsymbol{\chi} (\frac{\bx}{\epsilon})\nabla\mathbf{m}_0 \big\Vert_{L^2(\Omega)} = O(\epsilon),\\\
%		\big\Vert \m^\epsilon - \m^0 - \varepsilon\boldsymbol{\chi} (\frac{\bx}{\epsilon})\nabla\mathbf{m}_0 \big\Vert_{H^1(\Omega)} = O(\epsilon^{\frac{1}{2}}).
%	\end{gathered}
%\end{equation*}
%Compared to the estimate \eqref{H1 estimate of periodic case} in periodic case, the Neumann boundary causes a loss of convergence rate. Such a convergence deterioration  is coincide with the result of elliptic homogenization problem, see [cite] for example. 
Moreover, to approximate the multiscale solution more accurately, the 
Neumann corrector $\boldsymbol{\Phi}^\epsilon= \{\Phi_i^\epsilon\}_{i=1}^n$ was introduced
%$Cor_{\mathrm{N}}$ as
%\begin{equation}
%	Cor_{\mathrm{N}}=\sum_{i=1}^n\left(\Phi_i^\epsilon-x_i-\varepsilon \chi_i^{\varepsilon}\right) \frac{\partial \mathbf{m}^0}{\partial x_i}
%\end{equation}
\begin{equation}\label{neumann correct}
	\left\{\begin{aligned}
		&\operatorname{div}\left(\ba^{\varepsilon} \nabla \Phi_i^\epsilon\right)  =\operatorname{div}\left(\ba^0 \nabla x_i\right) = 0 \quad \text { in } \Omega, \\
		&\boldsymbol{\nu} \cdot \ba^{\varepsilon} \nabla \Phi_i^\epsilon  
		=\boldsymbol{\nu} \cdot \ba^{0} \nabla x_i \quad \text { on } \partial \Omega .
	\end{aligned}\right.
\end{equation}
Here, $x_i$ is the $i$-th component of the spatial variable $\mathbf{x}$, which can be seen as the homogenized solution of $\Phi_i^\epsilon$. To make $\Phi_i^\epsilon$ be unique up to a constant, it is assumed that $\Phi_i^\epsilon(\tilde{\boldsymbol{x}})-\tilde{\boldsymbol{x}}=0 $ for some $\tilde{\boldsymbol{x}} \in \Omega $.
%Given that $\Phi_i^\epsilon$ possesses uniqueness up to a constant term, it is permissible to assume that $\Phi_i^\epsilon(\tilde{\boldsymbol{x}})-\tilde{\boldsymbol{x}}=0 $ for some $\tilde{\boldsymbol{x}} \in \Omega $. 
Utilizing the Neumann corrector $\boldsymbol{\Phi}^\epsilon$, \cite{CLS:apa:2022} provided a sharper estimation for the approximation
\begin{equation}\label{approximation of Neumann case}
	\begin{gathered}
		\m^\epsilon \approx \m_0 + (\boldsymbol{\Phi}^\epsilon - \bx) \nabla\mathbf{m}_0.
	\end{gathered}
\end{equation}
%In this work, we consider the full system in \eqref{eqn: Periodic problem} and provide a rigorous deduction of approximation \eqref{approximation of periodic case} with convergence order in the $H^1(\Omega)$ sense. 
In this work, the full system in \eqref{eqn: Periodic problem} is considered, and the convergence order of approximation \eqref{approximation of periodic case} in the $H^1(\Omega)$ sense is derived.
%Furthermore, a numerical framework is proposed to validate the convergence results for both the Periodic and Neumann problems.
%simulate the approximation \eqref{approximation of periodic case}, \eqref{approximation of Neumann case} and compare them with the theoretical results.

\subsection{Initial data}
To examine the asymptotic behavior of the solution from the multiscale system to the homogenized system under \eqref{approximation of periodic case} and \eqref{approximation of Neumann case}, it is natural to propose the same initial data, i.e. $\m_{\mathrm{init}}^\varepsilon = \m_{\mathrm{init}}^0$. However, this simple setting may violate the consistency of the boundary condition, because of $\boldsymbol{\nu}\cdot\ba^\epsilon\nabla\m_{\mathrm{init}}^\varepsilon \neq \boldsymbol{\nu}\cdot\ba^0\nabla\m_{\mathrm{init}}^0$ for the Neumann problem.
%especially when the Neumann problem is considered, since $\boldsymbol{\nu}\cdot\ba^\epsilon\nabla\m_{\mathrm{init}}^\varepsilon \neq \boldsymbol{\nu}\cdot\ba^0\nabla\m_{\mathrm{init}}^0$. 
To address this problem, we can apply the following strategy to determine the proper initial data. Given a smooth function $\m_{\mathrm{init}}^0\in S^2$ of the homogenized problem such that the consistency condition holds, then $\m_{\mathrm{init}}^\epsilon\in S^2$ is defined by the non-homogeneous multiscale harmonic mapping
\begin{equation}\label{harmonic mapping}
	\left\{\begin{aligned}
		&\m_{\mathrm{init}}^\varepsilon\times\left(\mathcal{A}_\varepsilon\m_{\mathrm{init}}^\varepsilon-\mathcal{A}^0\m_{\mathrm{init}}^0\right)=0,\\
        & \abs{\m_{\mathrm{init}}^\varepsilon}=1,\\
	&\boldsymbol{\nu}\cdot\ba^\epsilon\nabla\m_{\mathrm{init}}^\varepsilon = 0,\qquad
        \text{or}\qquad \m_{\mathrm{init}}^\varepsilon \text{ is periodic on $\partial\Omega$.}
        %\abs{\m_{\mathrm{init}}^\varepsilon}=1.
        \end{aligned}\right.
\end{equation}
By taking the outer product of both sides of \eqref{harmonic mapping} with $\m_{\mathrm{init}}^\varepsilon(\bx)$, %and utilizing the constraint $\m_{\mathrm{init}}^\epsilon\in S^2$, 
\eqref{harmonic mapping} can be rewritten as
%\begin{equation}
%	\nabla\cdot(\ba^{\varepsilon} \nabla\mathbf{m}_{\mathrm{init}}^{\varepsilon})=\mathbf{m}_{\mathrm{init}}^{\varepsilon}\times\left(\mathbf{m}_{\mathrm{init}}^{\varepsilon}\times\left(div\left(\ba^0\nabla\mathbf{m}_{\mathrm{init}}^0\right)\right)\right)-\ba^{\varepsilon}|\nabla \mathbf{m}_{\mathrm{init}}^{\varepsilon}|^2 \mathbf{m}_{\mathrm{init}}^{\varepsilon}.
%\end{equation}
\begin{equation}\label{cross product}
	\mathcal{A}_\varepsilon \mathbf{m}_{\mathrm{init}}^{\varepsilon}=\mathbf{m}_{\mathrm{init}}^{\varepsilon}\times\left(\mathbf{m}_{\mathrm{init}}^{\varepsilon}\times\mathcal{A}^0\mathbf{m}_{\mathrm{init}}^0\right)-\ba^{\varepsilon}|\nabla \mathbf{m}_{\mathrm{init}}^{\varepsilon}|^2 \mathbf{m}_{\mathrm{init}}^{\varepsilon}.
\end{equation}
Then, \eqref{cross product} can be simplified with the vector triple product formula
\begin{equation}\label{eqn:vec product}
	\boldsymbol{\eta}\times (\boldsymbol{\iota} \times \boldsymbol{\zeta}) = (\boldsymbol{\eta} \cdot \boldsymbol{\zeta}) \boldsymbol{\iota} - (\boldsymbol{\eta}\cdot\boldsymbol{\iota}) \boldsymbol{\zeta},\qquad \forall \boldsymbol{\eta},\boldsymbol{\iota},\boldsymbol{\zeta}.
\end{equation}
Finally, the variational form of \eqref{cross product} can be expressed as
%\textcolor{red}{Using the property of the cross product}, the variational form of \eqref{cross product} can be expressed as %follows:, then the variational form of \eqref{cross product} can be expressed as follows:
\begin{equation}\label{projection method}
	\begin{aligned}		
    	\big(\ba^{\varepsilon} \nabla\mathbf{m}_{\mathrm{init}}^{\varepsilon},\, \nabla \mathbf{v}\big)
     =&-(\boldsymbol{F},\,\mathbf{v})
     +
     \big(\left(\mathbf{m}_{\mathrm{init}}^{\varepsilon}\cdot \boldsymbol{F}\right)\mathbf{m}_{\mathrm{init}}^{\varepsilon},\,
     \mathbf{v}\big)\\
		&
  -\left(\Big\{\sum a^{\varepsilon}_{ij}\left(\partial_i \mathbf{m}_{\mathrm{init}}^{\varepsilon}\cdot \partial_j \mathbf{m}_{\mathrm{init}}^{\varepsilon}\right)\Big\}\mathbf{m}_{\mathrm{init}}^{\varepsilon},\,\mathbf{v}\right),
	\end{aligned}
	\quad \forall \mathbf{v} \in H^1(\Omega), 	
\end{equation}
where $\boldsymbol{F}=\bdiv(\ba^0\nabla\boldsymbol{m}_{\mathrm{init}}^0)$.
%The equation projection method is called the projection method, so that the calculated magnetic moments are always located on the unit sphere.
The equation \eqref{projection method} is named as the projection method, such that the computed magnetic moment is located on the unit sphere. However, due to the nonlinearity, \eqref{projection method} cannot be directly solved in closed form. 
%To obtain the desired solution, one can apply the iteration method, such as the Picard iteration, which not only demands multiple iterations but also mandates the provision of a suitable initial guess to ensure convergence, substantially augments the computational complexity. 
Therefore, iteration methods, such as the Picard iteration, can be applied to obtain the desired solution. These methods not only require multiple iterations but also require suitable initial guesses to ensure convergence, significantly increasing the computational complexity.
%In order to obtain the desired solution, iteration methods can be applied, such as the Picard iteration, which not only requires multiple iterations but also requires suitable initial guesses to ensure convergence, greatly increasing the computational complexity.

%To overcome the above difficulties, we design the expansion method, which utilizes the two-scale method on the initial data.
To overcome the above difficulties, we design the expansion method, where the two-scale method is utilized on the initial data.
%Alternatively, we introduce another approach, the expansion method, which utilizes the idea of a two-scale method on the initial value function. 
Given a smooth function $\mathbf{m}_{\mathrm{init}}^0(\mathbf{x})\in S^2$ of the homogenized problem,  $\mathbf{m}_{\mathrm{init}}^{\varepsilon}$ is defined as
\begin{equation}\label{expansion method}
    \mathbf{m}_{\mathrm{init}}^{\varepsilon}(\mathbf{x})=\mathbf{m}_{\mathrm{init}}^0(\mathbf{x})
    +\widetilde{\mathbf{m}}_{\mathrm{init}}^c\left(\mathbf{x},\frac{\bx}{\epsilon}\right),
\end{equation}
where $\widetilde{\mathbf{m}}_{\mathrm{init}}^c(\mathbf{x},\mathbf{y})$ is the correction term related to the boundary conditions of the multiscale equation, which can be represented as
\begin{equation}\label{define corrector for initial data}
	\widetilde{\mathbf{m}}_{\mathrm{init}}^c(\mathbf{x},\mathbf{y})=\left\{
		\begin{aligned}
			\varepsilon\boldsymbol{\chi}\left(\frac{\bx}{\epsilon}\right)\nabla\mathbf{m}_{\mathrm{init}}^{0}(\bx), \qquad &\text{with Periodic boundary},\\
			(\boldsymbol{\Phi}^\epsilon-\boldsymbol{x})\nabla\mathbf{m}_{\mathrm{init}}^{0}(\bx), \qquad &\text{with Neumann boundary}.
		\end{aligned}
	\right.
\end{equation}
%One can check the initial data $\mathbf{m}_{\mathrm{init}}^{\varepsilon}$ proposed above satisfies the consistency related to each boundary condition. 
It can be proved that the initial data $\mathbf{m}_{\mathrm{init}}^{\varepsilon}$ in \eqref{expansion method} satisfies the consistency of the boundary condition.
Specifically, for the Neumann problem, we have $\boldsymbol{\nu}\cdot\ba^\epsilon\nabla\m_{\mathrm{init}}^\varepsilon = \boldsymbol{\nu}\cdot\ba^0\nabla\m_{\mathrm{init}}^0$. However, $\mathbf{m}_{\mathrm{init}}^{\varepsilon}$ is not sphere-valued, which satisfies 
%\label{almost sphere valued}
\begin{equation*}
    \abs{\mathbf{m}_{\mathrm{init}}^{\varepsilon}}^2
    =
    \abs{\mathbf{m}_{\mathrm{init}}^{0}}^2
    +
    \abs{\widetilde{\mathbf{m}}_{\mathrm{init}}^c}^2
    =
    1 + \abs{\widetilde{\mathbf{m}}_{\mathrm{init}}^c}^2.
\end{equation*}
Then, we have
\begin{equation*}
    \abs{\mathbf{m}_{\mathrm{init}}^{\varepsilon}}-1 = \frac{\abs{\widetilde{\mathbf{m}}_{\mathrm{init}}^c}^2}{\abs{\mathbf{m}_{\mathrm{init}}^{\varepsilon}}+1},
\end{equation*}
where   $\abs{\mathbf{m}_{\mathrm{init}}^{\varepsilon}}-1$ is non-zero but $O(\abs{\widetilde{\mathbf{m}}_{\mathrm{init}}^c}^2)$. 
%small but not zero.
Compared with the projection method \eqref{projection method}, the expansion method provides the initial data of the multiscale system in explicit form, significantly improving the computational efficiency.
%Compared to the projection method, the expansion method does not require repetitive iterations, significantly reducing computational overhead.
In this work, we apply the initial setting \eqref{harmonic mapping} for the theoretical analysis, and apply setting \eqref{expansion method} for the numerical simulations. 
%Moreover, the following formula will also be used: considering Periodic boundary case, we apply $\mathcal{A}_\varepsilon$ to both sides of \eqref{almost sphere valued}, it follows that
%\begin{equation}\label{almost sphere valued form 2}
 %   \nabla\mathbf{m}_{\mathrm{init}}^{\varepsilon} \cdot  
 %   \ba^\epsilon\nabla\m_{\mathrm{init}}^\varepsilon
 %   +
 %   \mathbf{m}_{\mathrm{init}}^{\varepsilon} \cdot \mathcal{A}_\varepsilon
 %   \mathbf{m}_{\mathrm{init}}^{\varepsilon}
 %   =
 %   \frac{1}{2}\mathcal{A}_\varepsilon
 %   \big\{\abs{\widetilde{\mathbf{m}}_{\mathrm{init}}^c}^2\big\}
 %   \le C,
%\end{equation}
%where constant $C$ depends on $\Vert \boldsymbol{\chi}(\boldsymbol{y}) \Vert_{W^{2,\infty}(Y)} $ and $\Vert \mathbf{m}_{\mathrm{init}}^{0} \Vert_{W^{3,\infty}(\Omega)} $ but is independent of $\epsilon$.

%\subsection{Spatial and temporal discretization}
\subsection{Numerical scheme}

In this subsection, we design an improved implicit numerical  scheme to solve the homogenized LLG equation  \eqref{eqn:homogenized LLG system short}. 
The same scheme can be applied to solve the multiscale LLG equation \eqref{multi model}, the first-order cell problem \eqref{first-order cell problem}, and the second-order cell problem \eqref{second-order cell problem}.

In the spatial discretization, the finite element method is employed for each component of the homogenized magnetic moment $\m_0$. 
Given the lowest order finite element space $V_h \subset W^{1,2}(\Omega; \mathbb{R}^3) $ subordinate to the triangulation $T_h$ of the domain $\Omega$, the variational form reads as follows: To find $\m_{0,h}\in V_h$, such that
\begin{equation}\label{spatial discretization}
    \begin{aligned}
        &\left(\partial_t \m_{0,h}, \mathbf{v}_h\right)_h-\alpha\left(\m_{0,h} \times \partial_t \m_{0,h}, \mathbf{v}_h\right)_h \\
        =&-\left(1+\alpha^2\right)\left(\m_{0,h} \times \mathbf{h}^0_{\mathrm{eff}}\left(\m_{0,h}\right), \mathbf{v}_h\right)_h,
    \end{aligned}
\qquad \forall \mathbf{v}_h \in \mathbf{V}_h,
\end{equation}
where $(\cdot, \cdot)_h$ represents the discrete inner product in $L^2\left(\Omega\right)$. 
%For a given sequence $\{\varphi^j\}_{j>0}$, define the discrete time derivative $d_t \varphi^j:=k^{-1}\left(\varphi^j-\right.$ $\left.\varphi^{j-1}\right)$ for $j \geq 1$, and the midpoint average $\bar{\varphi}^{j+1 / 2}:=\frac{1}{2}\left(\varphi^{j+1}+\varphi^j\right)$ for $j \geq 0$.
%The implicit scheme, in particular, upholds vital attributes of the LLG equation, including the conservation of magnetization magnitude and the preservation of the Lyapunov structure. 

In the temporal discretization, 
implicit and semi-implicit schemes have often been used
due to their unconditional numerical stability \cite{JJLL:NMPDE:2022}. 
In more details, the implicit schemes preserve the magnetization's magnitude and the Lyapunov structure of the LLG equation. The work \cite{BP:SJNA:2006} provided an effective implicit temporal discretization scheme
\begin{equation}\label{implicit scheme}
	\begin{aligned}
		&\frac{1}{\Delta t}\big(\mathbf{m}_{0,h}^{j+1}, \mathbf{v}_h\big)_h-\frac{\alpha}{\Delta t}\big(\mathbf{m}_{0,h}^j \times \mathbf{m}_h^{j+1}, \mathbf{v}_h\big)_h \\
		=&\frac{1}{\Delta t}\big(\mathbf{m}_{0,h}^{j}, \mathbf{v}_h\big)_h-\left(1+\alpha^2\right)\big(\overline{\mathbf{m}}_{0,h}^{j+ 1/ 2} \times \mathbf{h}_{\mathrm{eff}}^0(\overline{\mathbf{m}}_{0,h}^{j+1/2}), \mathbf{v}_h\big)_h,
	\end{aligned}
\qquad \forall \mathbf{v}_h \in \mathbf{V}_h, 
\end{equation}
where $\Delta t$ is the time step size, $\mathbf{m}_{0,h}^j$ and $\overline{\mathbf{m}}_{0,h}^{j+1/2}$ are defined as
%Given a function $g$, $g^j$ and $\overline{g}^{j+1/2}$ are defined as
\begin{equation*}
    \begin{aligned}
        \mathbf{m}_{0,h}^j&:=\mathbf{m}_{0,h}^j(\bx,j\Delta t),\\
        \overline{\mathbf{m}}_{0,h}^{j+1/2}&:=\big(\mathbf{m}_{0,h}^{j+1}+\mathbf{m}_{0,h}^j\big)/2,
    \end{aligned}
    \qquad j=0,1,2,\cdots
\end{equation*}
To handle the nonlinear system in \eqref{implicit scheme}, a implicit iteration scheme was employed in each time step
\begin{equation}\label{fixed-point iteration 1}
    \begin{aligned}
        &\frac{1}{\Delta t}\big(\mathbf{m}_{0,h}^{j+1, \ell+1}, \mathbf{v}_h\big)_h-\frac{\alpha}{\Delta t}\big(\mathbf{m}_{0,h}^j \times \mathbf{m}_{0,h}^{j+1,\ell+1}, \mathbf{v}_h\big)_h\\
		&+\frac{1+\alpha^2}{4}\big(\mathbf{m}_{0,h}^{j+1, \ell+1} \times \mathbf{h}_{\mathrm{eff}}^0(\mathbf{m}^{j+1, \ell}_{0,h}) , \mathbf{v}_h\big)_h +\frac{1+\alpha^2}{4}\big(\mathbf{m}_{0,h}^{j+1, \ell+1} \times \mathbf{h}^0_{\mathrm{eff}}(\mathbf{m}^{j}_{0,h}), \mathbf{v}_h\big)_h\\
		&+\frac{1+\alpha^2}{4}\big(\mathbf{m}_{0,h}^j \times \mathbf{h}_{\mathrm{eff}}^0( \mathbf{m}^{j+1, \ell+1}_{0,h}) , \mathbf{v}_h\big)_h \\
        =&\frac{1}{\Delta t}\big(\mathbf{m}_{0,h}^j, \mathbf{v}_h\big)_h-\frac{1+\alpha^2}{4}\big(\mathbf{m}_{0,h}^j \times \mathbf{h}^0_{\mathrm{eff}}(\mathbf{m}_{0,h}^{j}) , \mathbf{v}_h\big)_h, \qquad	\forall \mathbf{v}_h \in \mathbf{V}_h,\quad \ell=0,1,2,\cdots
    \end{aligned}
    \vspace{-1mm}
\end{equation}
where $\ell$ denotes the $\ell$-th iteration step.
%where $\m_{0,h}^{j+1,\ell}$ is $\m_{0,h}^{j}$ at the $\ell$-th iteration steps. 
%The unconditional stability of scheme \eqref{implicit scheme} is thoroughly discussed in \cite{BP:SJNA:2006}. 
The equation \eqref{fixed-point iteration 1} has a unique solution when the following condition is satisfied
%It has been proven that the unique solution exists for \eqref{fixed-point iteration 1} when the time step size $\Delta t$ and the spatial mesh size $h$ satisfy the condition:
\begin{equation}
	\Delta t\leq \frac{h^2}{10(1+\alpha^2)}.
\end{equation}
%However, the main drawback of Scheme \eqref{fixed-point iteration 1} is its slow convergence.
where $h$ is the spatial mesh size. However, convergence efficiency has always been the emphasis to optimize the above scheme.

Based on the temporal discretization scheme \eqref{implicit scheme}, an improved implicit scheme is designed, and a novel semi-implicit iteration scheme is employed to handle the corresponding nonlinear system.
%In this study, we propose an improved implicit scheme. Based on the discretization scheme \eqref{implicit scheme}, a novel semi-implicit iteration scheme is employed to handle the corresponding nonlinear system. 
The semi-implicit iteration scheme is given by
\begin{equation}\label{fixed-point iteration 2}
    \begin{aligned}
        &\frac{1}{\Delta t}\big(\mathbf{m}_{0,h}^{j+1, \ell+1}, \mathbf{v}_h\big)_h-\frac{\alpha}{\Delta t}\big(\mathbf{m}_{0,h}^j \times \mathbf{m}_{0,h}^{j+1,\ell+1}, \mathbf{v}_h\big)_h\\
		&+\frac{1+\alpha^2}{4}\big(\mathbf{m}_{0,h}^{j+1, \ell} \times \mathbf{h}_{\mathrm{eff}}^0(\mathbf{m}_{0,h}^{j+1, \ell+1}) , \mathbf{v}_h\big)_h +\frac{1+\alpha^2}{4}\big(\mathbf{m}_{0,h}^{j+1, \ell+1} \times \mathbf{h}^0_{\mathrm{eff}}(\mathbf{m}_{0,h}^{j}), \mathbf{v}_h\big)_h\\
		&+\frac{1+\alpha^2}{4}\big(\mathbf{m}_{0,h}^j \times \mathbf{h}_{\mathrm{eff}}^0( \mathbf{m}_{0,h}^{j+1, \ell+1}) , \mathbf{v}_h\big)_h \\
        =&\frac{1}{\Delta t}\big(\mathbf{m}_{0,h}^j, \mathbf{v}_h\big)_h-\frac{1+\alpha^2}{4}\big(\mathbf{m}_{0,h}^j \times \mathbf{h}_{\mathrm{eff}}^0(\mathbf{m}_{0,h}^{j}) , \mathbf{v}_h\big)_h, \qquad	\forall \mathbf{v}_h \in \mathbf{V}_h, \quad \ell=0,1,2,\cdots
    \end{aligned}
\end{equation}

%\cref{tab: time and niter} presents the iteration steps and corresponding times required for convergence of both the original and improved schemes.

For the original scheme \eqref{implicit scheme},\eqref{fixed-point iteration 1} and the improved scheme \eqref{implicit scheme},\eqref{fixed-point iteration 2}, the converge time and iteration steps of the corresponding nonlinear system are depicted in \cref{tab: time and niter}. 
%From the table, it is evident that the improved scheme significantly reduces the number of iteration steps and relaxes the constraints on the time step size, which enables the scheme to handle the multiscale LLG equation.
The table shows that the improved scheme significantly reduces the number of iteration steps and relaxes the constraints on the time step size, enabling it to handle the multiscale LLG equation.

\begin{table}[h!]
    \centering
	\begin{tabular}{|c|c|c|c|c|}
		\hline
		\multirow{2}*{$\Delta t$} & \multicolumn{2}{c|}{original scheme \cite{BP:SJNA:2006}}  &\multicolumn{2}{c|}{improved scheme}\\ \cline{2-5}  
                        &  time(s)  &  iteration steps  
                        &  time(s) &  iteration steps \\         
		\hline $10^{-4}$ & - & - & 49.2 & 9\\
		\hline $10^{-5}$ & 352.2 & 58 & 33.4 & 6\\
		\hline $10^{-6}$ & 54.8 & 10 & 22.8 & 4\\
		\hline
	\end{tabular}
	\caption{Comparison of the converge time and iteration steps for two numerical schemes.
   The spatial mesh size is $h=\frac{1}{180}$, $-$ represents that the scheme does not converge under the time step size $\Delta t$.}
 %The time and iteration steps required to make the corresponding nonlinear system converge for the original scheme \eqref{implicit scheme},\eqref{fixed-point iteration 1} and the improved scheme \eqref{implicit scheme},\eqref{fixed-point iteration 2}. The spatial mesh size is setting as $h=\frac{1}{180}$. The symbol $-$ indicates that the scheme does not converge under this time step size $\Delta t$.}
    \label{tab: time and niter}
    \vspace{-4mm}
\end{table}

\subsection{The flowchart of the algorithm}
By combining the aforementioned two-scale method with the corresponding numerical scheme, the algorithm for solving \eqref{multi model} is summarized in Algorithm \ref{flowchart}.

\begin{algorithm}[h!]
      \caption{The flowchart of the algorithm to solve \eqref{multi model} with two-scale method.}
			\label{flowchart}
      \begin{algorithmic}[1] % 控制是否有序号
        \REQUIRE  $\mathbf{m}_{0,h}^0=\mathbf{m}_{\mathrm{init}}^0(\boldsymbol{x})$, $T=N_t k$, all parameters need; % input 的内容
	      \ENSURE $\big\{\widetilde{\mathbf{m}}^{\varepsilon,j}_h\big\}_{j=0,1,\cdots,N_t}$; % output 的内容
        
        \STATE Get the initial value $\widetilde{\mathbf{m}}_{\mathrm{init}}^{\varepsilon}(\mathbf{x})$ of the multiscale LLG equation by \eqref{expansion method};
        \STATE Solve the first-order cell problem \eqref{first-order cell problem} to get the auxiliary functions $\boldsymbol{\chi}$ and $U^*$;
        \STATE Compute the homogenized coefficient $\mathbf{a}^0,\mu^0,K^0$ and $\mathbf{H}_{\mathrm{d}}^0$ based on \eqref{homogenized coefficient};
        \STATE Solve the second-order cell problem \eqref{second-order cell problem} to get the auxiliary functions $\boldsymbol{\theta},\rho,\boldsymbol{\Lambda}$ and $\kappa$;
        \STATE Let $j=0$;
        \WHILE {$j<N_t$}
            \STATE Let $\ell=0,\ \mathbf{m}_{0,h}^{j+1,0}=\mathbf{m}_{0,h}^j$;
            \STATE Calculate $\mathbf{m}_{0,h}^{j+1,1}$ by solving \eqref{fixed-point iteration 2};
          \WHILE {$\big\|\mathbf{m}_{0,h}^{j+1, \ell+1} - \mathbf{m}_{0,h}^{j+1, \ell}\big\|\ge$ threshold}
              \STATE Let $\ell=\ell+1$;
              \STATE Calculate $\mathbf{m}_{0,h}^{j+1,\ell+1}$ by solving \eqref{fixed-point iteration 2};
          \ENDWHILE
          \STATE Update $\mathbf{m}_{0,h}^{j+1}=\mathbf{m}_{0,h}^{j+1,\ell}$;
          \STATE Let $j=j+1$;
        \ENDWHILE
        \STATE Utilize \eqref{first-order corrector} to get the first-order correctors $\m_1$ and $U_1$;
        \STATE Use \eqref{second-order corrector} to get the second-order corrector $\m_2$;
        \STATE Assemble the approximate solution $\{\widetilde{\mathbf{m}}^{\varepsilon,j}_h\}_{j=0,1,\cdots,N_t}$ based on \eqref{def two scale approximate sln}.
    \end{algorithmic}
\end{algorithm}
\begin{remark}
  As the auxiliary functions are independent of the time variable, the cell problems \eqref{first-order cell problem} and \eqref{second-order cell problem} only need to be computed once throughout the process.
\end{remark}

\section{Convergence analysis under different effective fields and boundary corrections}\label{sec3}

%In this section, we present the analytical results regarding the convergence orders of the solution to the multiscale LLG equation given in \eqref{multi model} for both the Periodic and Neumann problems. 
In this section, we present some theoretical results for both the Periodic and Neumann problems. 
For the Periodic problem, the new convergence results are presented in \cref{thm: convergence result of periodic case} of \cref{convergence result}, and the corresponding detailed proof is given in \cref{prof thm1}. 
For the Neumann problem, the results are presented in \cref{thm:neumann} and \cref{prop:neumann} of \cref{convergence result}. 
%For the new result \cref{thm: convergence result of periodic case} of the Periodic problem, the detailed proofs are given in \cref{prof thm1}. 
%As for the Neumann problem, only the results \cref{thm:neumann} and \cref{prop:neumann} are presented, while the proofs can be referenced from our previous work \cite{CLS:apa:2022}. %The convergence results for both problems are verified by numerical experiments in the next section.
Theoretical results for both problems will be verified by the numerical experiments in next section.
%Numerical experiments in the next section verify the convergence results for both problems.

\subsection{Convergence results}\label{convergence result}
In order to state the results, the following assumptions are firstly introduced:
\begin{enumerate}[(I).]
	\item\label{assumption: coefficient}\textbf{Coefficients.} The matrix $\mathbf{a}(\by)$ possesses symmetry, uniform coercivity, and boundedness, that is, there exist positive constants $a_{\mathrm{min}},a_{\mathrm{max}}>0$, such that $a_{\mathrm{min}}\le \mathbf{a}(\by)\le a_{\mathrm{max}}$. Moreover, the periodic coefficients satisfy
	%$\ba(\by), M_s(\by)\in C^2(Y)$ and $\gamma(\by)\in C(Y)$. 
	\begin{equation*}
		\ba(\by),\, M_s(\by)\,\text{ and }\, \gamma(\by)\in C^2(Y).
	\end{equation*}
	Additionally, the auxiliary functions defined in \eqref{first-order cell problem} and \eqref{second-order cell problem} satisfy
	%$\boldsymbol{\chi}(\by)$, and $\boldsymbol{\theta}(\by)$, $\boldsymbol{\Lambda}(\by)$, $\rho(\by)$
	\begin{equation*}
		\boldsymbol{\chi}(\by),\, U^*(\by),\,\boldsymbol{\theta}(\by),\,
		\boldsymbol{\Lambda}(\by)\,\text{ and }\, \rho(\by)\in C^2(Y).
	\end{equation*}
	For simplicity, the constant $C_{\mathrm{coe}}$ denotes the shared $C^2(Y)$ upper bound of the above periodic coefficients and auxiliary functions.
	\item\label{assumption: initial data}\textbf{Initial data.} The initial data $\m_{\mathrm{init}}^0(\bx)\in C^4(\bar{\Omega})$ and $\m_{\mathrm{init}}^\epsilon(\bx)\in C^2(\Omega)$ satisfy  \eqref{harmonic mapping}. Moreover, the following estimate holds 
    %we suppose it holds the estimates
 \begin{equation*}
     \big\Vert \m_{\mathrm{init}}^\epsilon(\bx) - \big(\m_{\mathrm{init}}^0(\bx) + \widetilde{\mathbf{m}}_{\mathrm{init}}^c(\mathbf{x})\big)
     \big\Vert_{H^1(\Omega)} \le C_{\mathrm{coe}} \epsilon.
 \end{equation*}
 Here, corrector $\widetilde{\mathbf{m}}_{\mathrm{init}}^c(\mathbf{x})$ is defined in \eqref{define corrector for initial data}, and $\m_{\mathrm{init}}^0(\bx)$ is bounded by
 %Let us use the same notation $C_{\mathrm{coe}}$ such that
%	\begin{equation*}
%		\m_{\mathrm{init}}^\epsilon(\bx)
%		= \m_{\mathrm{init}}^0(\bx) + \epsilon\boldsymbol{\chi}\left(\frac{\bx}{\epsilon}\right)\nabla\mathbf{m}_{\mathrm{init}}^0(\bx),
%	\end{equation*}
%	for the Periodic problem, and satisfying 
%	\begin{equation*}
%		\m_{\mathrm{init}}^\epsilon(\bx)
%		= \m_{\mathrm{init}}^0(\bx) + (\boldsymbol{\Phi}^\epsilon-\boldsymbol{x})\nabla\mathbf{m}_{\mathrm{init}}^0(\bx),
%	\end{equation*}
%	for the Neumann problem.	
	\begin{equation*}
		\Vert \m_{\mathrm{init}}^0 \Vert_{C^4(\bar{\Omega})}
		%		+
		%		\Vert \bdiv  (\ba^\epsilon \nabla \m_{\mathrm{init}}^\epsilon(\bx)) \Vert_{L^2(\Omega)} 
		\le C_{\mathrm{coe}}.
	\end{equation*} 
 \item\label{assumption: boundary}\textbf{Boundary.} For the Neumann problem on the open bounded domain $\Omega$, the boundary $\partial\Omega$ satisfies $\partial\Omega \in C^{1,1}$.
	%	Applying the classical result in \cite{shen2018periodic}, It is worth noting that the assumption \eqref{initial data} plays a crucial role in ensuring the uniform estimate of $\m^\epsilon$ as presented in Theorem \ref{regularety 3}.
	%	\item\label{assumpsion: solutions}\textbf{Solutions.} We assume the existence of a unique solution $\m^\epsilon \in L^{\infty}(0,T;C^2(\Omega))$ for the multi-scale LLG equation \cref{eq2.1}, and the existence of a unique solution $\m_0 \in L^{\infty}(0,T;C^4(\bar{\Omega}))$ for the homogenized equation \eqref{eqn:homogenized LLG system}. Furthermore, we set
	%	\begin{equation*}
		%		\Vert \m_0 \Vert_{L^{\infty}(0,T;C^4(\bar{\Omega}))} \le  C_{\mathrm{coe}}.
		%	\end{equation*}
\end{enumerate}

\subsubsection{Periodic problem} 
%Now we are ready to introduce the theoretical result for Periodic problem:
The theoretical results for the Periodic problem are given as follows
%\noindent\textbf{a. Periodic problem.} 
%In this case, we exam both the case of 2D and 3D. For the case $n=2$, we use the simplified model introduced in \eqref{eq2.3}:
\begin{theorem}\label{thm: convergence result of periodic case}
	Let $\boldsymbol{m}^{\varepsilon} \in L^{\infty}\left(0, T ; H^2(\Omega)\right)$ be the unique solution of the multiscale LLG equation \eqref{eqn: Periodic problem}
	and $ \boldsymbol{m}_0 \in L^{\infty}\left(0, T ; H^6(\Omega)\right)$ be the unique solutions of the homogenized LLG equation \eqref{homogenized model periodic},
	respectively. When $n=3$, there exists some $T^* \in(0, T]$ independent of $\varepsilon$, such that for any $t \in\left(0, T^*\right)$, it holds
	\begin{equation}\label{convergence estimate of periodic}
		\begin{gathered}
			\left\|\boldsymbol{m}^{\varepsilon}(\bx,t)-\boldsymbol{m}_0(\bx,t)\right\|_{L^2(\Omega)} \leq C \varepsilon^{\frac{5}{6}}\ln (\varepsilon^{-1} + 1),\\
			\left\|\boldsymbol{m}^{\varepsilon}(\bx,t)-\boldsymbol{m}_0(\bx,t)-\varepsilon\boldsymbol{\chi}\left(\frac{\bx}{\epsilon}\right)\nabla\mathbf{m}_0(\bx,t)\right\|_{H^1(\Omega)} \leq C \varepsilon^{\frac{1}{2}}\ln (\varepsilon^{-1} + 1).
		\end{gathered}
	\end{equation}
	Furthermore, when $n=2$, there exists some $T^{**} \in(0, T]$ independent of $\varepsilon$, such that for any $t \in\left(0, T^{**}\right)$, it holds
	\begin{equation}\label{value of sigma 2}
		\begin{gathered}
			\left\|\boldsymbol{m}^{\varepsilon}(\bx,t)-\boldsymbol{m}_0(\bx,t)\right\|_{L^2(\Omega)} \leq C \varepsilon^{1},\\
			\left\|\boldsymbol{m}^{\varepsilon}(\bx,t)-\boldsymbol{m}_0(\bx,t)-\varepsilon\boldsymbol{\chi}\left(\frac{\bx}{\epsilon}\right)\nabla\mathbf{m}_0(\bx,t)\right\|_{H^1(\Omega)} \leq C \varepsilon^{1}.
		\end{gathered}
	\end{equation}
    In both case, the constant $C$ depends on $C_{\mathrm{coe}}, a_{\min }$ and $a_{\max }$ given in Assumptions \eqref{assumption: coefficient}-\eqref{assumption: initial data}, but is independent of $\varepsilon$.
\end{theorem}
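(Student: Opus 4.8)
The plan is to compare $\mathbf{m}^\varepsilon$ not directly with $\mathbf{m}_0$ but with the full two-scale ansatz $\widetilde{\mathbf{m}}^\varepsilon = \mathbf{m}_0 + \varepsilon\mathbf{m}_1 + \varepsilon^2\mathbf{m}_2$ of \eqref{def two scale approximate sln}, whose correctors solve the cell problems \eqref{first-order cell problem} and \eqref{second-order cell problem} and obey the geometric ansatz \eqref{geo property}. First I would substitute $\widetilde{\mathbf{m}}^\varepsilon$ into the Periodic LLG system \eqref{eqn: Periodic problem} and collect the consistency residual $\mathbf{R}^\varepsilon$. By the very construction of the correctors the terms of orders $\varepsilon^{-2}$, $\varepsilon^{-1}$ and $\varepsilon^0$ either vanish or collapse to the homogenized equation \eqref{homogenized model periodic}, so that $\mathbf{R}^\varepsilon$ starts at order $\varepsilon$. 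Following the decomposition announced in the introduction, I would split $\mathbf{R}^\varepsilon$ into a \emph{two-scale approximation error}, produced by the exchange, anisotropy and external-field parts, and a \emph{stray-field error}, produced by expanding the nonlocal potential $U^\varepsilon \approx U_0 + \varepsilon U_1$ in \eqref{def two scale approximate sln}.

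Before the energy estimate I would secure the $\varepsilon$-uniform a priori bounds. The regularity $\mathbf{m}_0 \in L^\infty(0,T;H^6)$ bounds all correctors and hence $\|\mathbf{R}^\varepsilon\|$, while the oscillating factors $\boldsymbol{\chi}(\mathbf{x}/\varepsilon)$ and $\boldsymbol{\theta}(\mathbf{x}/\varepsilon)$ contribute the expected $\varepsilon$-powers once one differentiates in $\mathbf{x}$. The crucial tool is a uniform $W^{1,6}$ estimate for $\mathbf{m}^\varepsilon$, obtained from interior elliptic regularity transplanted to the torus via its compactness; this controls the cross-product nonlinearities in the sequel. Tracking the $\varepsilon$-powers in $\mathbf{R}^\varepsilon$ is where the dimension enters: for $n=2$ the stray field is the local zeroth-order term \eqref{stray field 2D}, so every residual contribution is genuinely $O(\varepsilon)$; for $n=3$ the nonlocal Poisson solve \eqref{stray field 3D} together with the borderline Sobolev embedding $W^{1,6}\hookrightarrow L^\infty$ forces interpolation, which is the source of the fractional orders $\varepsilon^{5/6}$, $\varepsilon^{1/2}$ and the logarithmic factor $\ln(\varepsilon^{-1}+1)$.

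The core is the energy estimate for the error $\mathbf{e}^\varepsilon = \mathbf{m}^\varepsilon - \widetilde{\mathbf{m}}^\varepsilon$. Subtracting the residual equation from \eqref{eqn: Periodic problem} and testing with $\mathbf{e}^\varepsilon$ and with $-\Delta\mathbf{e}^\varepsilon$ (so as to reach the $H^1$ norm), I would exploit the skew-symmetry of $\mathbf{m}\times\,\cdot\,$, integrate by parts on the dominant exchange term $\mathbf{m}^\varepsilon\times\mathrm{div}(\mathbf{a}^\varepsilon\nabla\mathbf{m}^\varepsilon)$, and use the coercivity of $\mathbf{a}$ from Assumption \eqref{assumption: coefficient}. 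The lower-order anisotropy, external and stray-field contributions are estimated by Hölder's inequality with the uniform $W^{1,6}$ and $L^\infty$ bounds, and the top-order terms are absorbed into the coercive part by Young's inequality. This yields a differential inequality of the form $\frac{\mathrm{d}}{\mathrm{d}t}\|\mathbf{e}^\varepsilon\|_{H^1}^2 \le C\|\mathbf{e}^\varepsilon\|_{H^1}^2 + C\|\mathbf{R}^\varepsilon\|_{*}^2$. Using the initial consistency from Assumption \eqref{assumption: initial data} and Gronwall's inequality on a short interval $[0,T^*]$ (resp. $[0,T^{**}]$) closes the $H^1$ bound; the $L^2$ estimate follows either directly or by a duality argument, and passing from $\mathbf{e}^\varepsilon$ back to $\mathbf{m}^\varepsilon - \mathbf{m}_0 - \varepsilon\boldsymbol{\chi}(\mathbf{x}/\varepsilon)\nabla\mathbf{m}_0$ costs only the $O(\varepsilon)$ contribution of the $\varepsilon^2\mathbf{m}_2$ term.

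\textbf{The main obstacle} I anticipate is twofold: first, closing the energy estimate against the highest-order exchange nonlinearity, which is only possible once the $\varepsilon$-uniform $W^{1,6}$ bound for $\mathbf{m}^\varepsilon$ is in hand, so establishing that estimate uniformly in $\varepsilon$ is itself a delicate step; and second, handling the nonlocal stray-field error in three dimensions, where the loss in the Sobolev embedding is exactly what produces the suboptimal exponents and the logarithmic correction, in contrast with the clean $O(\varepsilon)$ rate available in the local two-dimensional case.
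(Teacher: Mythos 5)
Your architecture coincides with the paper's in every structural respect: the comparison is made with the full second-order ansatz $\widetilde{\m}^\epsilon=\m_0+\epsilon\m_1+\epsilon^2\m_2$ of \eqref{def two scale approximate sln} built on the geometric constraints \eqref{geo property}; the residual is split into a two-scale part and a stray-field part exactly as in \eqref{eqn:equivalent system of m epsilon}; the fractional exponents and the logarithm in 3D are traced to the stray field (in \cref{thm: Consistency estimate of two-scale approximation} one has $\Vert\boldsymbol{\Theta}^\varepsilon_\mathrm{sf}\Vert_{L^r}\le C_r\varepsilon^{1/r}\ln(\varepsilon^{-1}+1)$, used with $r=6/5$ for the $L^2$ bound, whence $\varepsilon^{5/6}$, and $r=2$ for the $H^1$ bound, whence $\varepsilon^{1/2}$); the nonlinearity is controlled by the $\varepsilon$-uniform $W^{1,6}$ bound of \cref{regularety 3}; Gr\"{o}nwall closes the argument on a short interval; and discarding $\epsilon^2\m_2$ costs only $O(\epsilon)$ in $H^1$. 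So the route is the paper's route, up to one step.

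That step would fail as written: testing the error equation \eqref{system of e} with $-\Delta\e^\epsilon$ to reach the $H^1$ norm. With the oscillatory coefficient $\ba^\epsilon=\ba(\bx/\epsilon)$, the damping term tested against $-\Delta\e^\epsilon$ produces, after integration by parts, $\alpha\int_\Omega a^\epsilon_{ij}\,\partial_k\partial_j\e^\epsilon\cdot\partial_k\partial_i\e^\epsilon\d\bx+\alpha\int_\Omega(\partial_k a^\epsilon_{ij})\,\partial_j\e^\epsilon\cdot\partial_k\partial_i\e^\epsilon\d\bx$; since $\nabla\ba^\epsilon=O(\epsilon^{-1})$, absorbing the second integral by Young's inequality leaves a Gr\"{o}nwall factor of size $\epsilon^{-2}$, which destroys uniformity in $\epsilon$, and the precession term $\m^\epsilon\times\mathrm{div}(\ba^\epsilon\nabla\e^\epsilon)$ no longer disappears by orthogonality against this multiplier. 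The paper instead multiplies by the linearized effective field $\widetilde{\mathcal{H}}^\epsilon_e(\e^\epsilon,\Psi^\epsilon)$ as in \eqref{eqn:product with H}: the time-derivative term then yields exactly $\frac{\d}{\d t}\mathcal{G}_{\mathcal{L}}^\epsilon[\e^\epsilon,\Psi^\epsilon]$, whose leading part is $\frac12\int_\Omega\ba^\epsilon\nabla\e^\epsilon\cdot\nabla\e^\epsilon\d\bx$, the damping term contributes the sign-definite $\alpha\Vert\widetilde{\mathcal{H}}^\epsilon_e(\e^\epsilon,\Psi^\epsilon)\Vert_{L^2(\Omega)}^2$ with no commutators, and $\m^\epsilon\times\widetilde{\mathcal{H}}^\epsilon_e\cdot\widetilde{\mathcal{H}}^\epsilon_e=0$ kills the worst precession contribution. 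A second, smaller imprecision: the rate $\varepsilon^{1/r}$ for $\boldsymbol{\Theta}^\varepsilon_\mathrm{sf}$ is not produced by a Sobolev-embedding loss but by the $O(\epsilon)$-thick boundary layer created by the zero extension $M^\varepsilon\m^\varepsilon\mathcal{X}_\Omega$ in the Poisson problem \eqref{stray field 3D}; the Sobolev exponents only dictate which $L^r$ norm of that error the stability estimates \eqref{ineq of e^eps_b n=3}--\eqref{stability result in H^1} can accept.
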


\subsubsection{Neumann problem}
%\noindent\textbf{b. Neumann problem.}
For the Neumann problem, the Neumann corrector $\boldsymbol{\Phi}^\epsilon= \{\Phi_i^\epsilon\}_{i=1}^n$ defined in \eqref{neumann correct} is employed to avoid the convergence deterioration on the boundary.
%When we apply the classic two-scale approximation on the Neumann boundary problem, an extra convergence deterioration is induced. To derive a better approximation, we apply the so-called Neumann corrector $\boldsymbol{\Phi}^\epsilon= \{\Phi_i^\epsilon\}_{i=1}^n$ defined in \eqref{neumann correct}.
%$Cor_{\mathrm{N}}$ as
%\begin{equation}
%	Cor_{\mathrm{N}}=\sum_{i=1}^n\left(\Phi_i^\epsilon-x_i-\varepsilon \chi_i^{\varepsilon}\right) \frac{\partial \mathbf{m}^0}{\partial x_i}
%\end{equation}
%We provide the error estimation for the Neumann problem.
\begin{proposition}\label{thm:neumann}
	Let $\boldsymbol{m}^{\varepsilon} \in L^{\infty}\left(0, T ; H^2(\Omega)\right)$ be the unique solution of the multiscale LLG equation \eqref{neumann}
	and $ \boldsymbol{m}_0 \in L^{\infty}\left(0, T ; H^6(\Omega)\right)$ be the unique solutions of the homogenized LLG equation \eqref{homogenized problem Neumann},
	respectively. When $n=3$, there exists some $T^* \in(0, T]$ independent of $\varepsilon$, such that for any $t \in\left(0, T^*\right)$, it holds
	\begin{equation*}
		\begin{gathered}
			\left\|\boldsymbol{m}^{\varepsilon}(\bx,t)-\boldsymbol{m}_0(\bx,t)\right\|_{L^2(\Omega)} \leq C \varepsilon^{\frac{5}{6}}\ln (\varepsilon^{-1} + 1),\\
			\left\|\boldsymbol{m}^{\varepsilon}(\bx,t)-\boldsymbol{m}_0(\bx,t)
			-(\boldsymbol{\Phi}^\epsilon-\boldsymbol{x})\nabla\mathbf{m}_0(\bx, t)\right\|_{H^1(\Omega)} \leq C \varepsilon^{\frac{1}{2}}\ln (\varepsilon^{-1} + 1).
		\end{gathered}
	\end{equation*}
	Furthermore, when $n=2$, there exists some $T^{**} \in(0, T]$ independent of $\varepsilon$, such that for any $t \in\left(0, T^{**}\right)$, it holds
	\begin{equation*}
		\begin{gathered}
			\left\|\boldsymbol{m}^{\varepsilon}(\bx, t)
			-\boldsymbol{m}_0(\bx, t)\right\|_{L^2(\Omega)} \leq C \varepsilon^{1},\\
			\left\|\boldsymbol{m}^{\varepsilon}(\bx, t)-\boldsymbol{m}_0(\bx, t)
			-(\boldsymbol{\Phi}^\epsilon-\boldsymbol{x})\nabla\mathbf{m}_0(\bx, t)\right\|_{H^1(\Omega)} \leq C \varepsilon^{1}.
		\end{gathered}
	\end{equation*}
    In both case, the constant $C$ depends on $C_{\mathrm{coe}}, a_{\min }$ and $a_{\max }$ given in Assumptions \eqref{assumption: coefficient}-\eqref{assumption: initial data}, but is independent of $\varepsilon$.
\end{proposition}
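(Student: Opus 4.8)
The plan is to mirror the argument behind \cref{thm: convergence result of periodic case}, replacing the interior first-order corrector $\varepsilon\boldsymbol{\chi}(\tfrac{\bx}{\epsilon})\nabla\m_0$ by the Neumann corrector term $(\boldsymbol{\Phi}^\epsilon-\bx)\nabla\m_0$ of \eqref{approximation of Neumann case}, whose role is precisely to absorb the boundary layer and thereby recover the same rates as the Periodic problem. First I would construct a modified approximating solution $\widetilde{\m}^\epsilon=\m_0+(\boldsymbol{\Phi}^\epsilon-\bx)\nabla\m_0+\varepsilon^2\m_2$, with $\m_2$ the second-order corrector of \eqref{second-order corrector}, and renormalize it through the geometric ansatz \eqref{geo property} so that $|\widetilde{\m}^\epsilon|\approx 1$. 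By the defining property \eqref{neumann correct} of $\boldsymbol{\Phi}^\epsilon$, the flux $\boldsymbol{\nu}\cdot\ba^\epsilon\nabla\widetilde{\m}^\epsilon$ matches the homogenized Neumann data $\boldsymbol{\nu}\cdot\ba^0\nabla\m_0$ on $\partial\Omega$ up to higher-order terms, which eliminates the boundary inconsistency responsible for the convergence deterioration of \eqref{approximation of periodic case}.

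Then I would carry out the following steps in order. (i) Establish the uniform $W^{1,6}$ bound for $\m^\epsilon$ via the interior elliptic estimate together with the compactness of the torus, as in the Periodic case; in $n=3$ this gives $L^\infty$ control and is what lets the nonlinear and stray-field terms be estimated independently of $\varepsilon$. (ii) Prove the quantitative estimates for $\boldsymbol{\Phi}^\epsilon$ under the $C^{1,1}$ hypothesis \eqref{assumption: boundary}: that $\boldsymbol{\Phi}^\epsilon-\bx$ coincides with $\varepsilon\boldsymbol{\chi}(\tfrac{\bx}{\epsilon})$ away from $\partial\Omega$, while the difference is a boundary-layer term supported in an $O(\varepsilon)$ strip with $\|\nabla(\boldsymbol{\Phi}^\epsilon-\bx-\varepsilon\boldsymbol{\chi}(\tfrac{\bx}{\epsilon}))\|_{L^2(\Omega)}\le C\varepsilon^{1/2}$. (iii) Substitute $\widetilde{\m}^\epsilon$ into \eqref{neumann} and compute the consistency residual, decomposing it, as in the Periodic proof, into a two-scale approximation error and a stray-field error; in 3D the nonlocal stray field \eqref{stray field 3D} forces the loss of a half power and the logarithmic factor, whereas the local form \eqref{stray field 2D} in 2D produces a clean $O(\varepsilon)$ residual. (iv) Set $\boldsymbol{\xi}^\varepsilon=\m^\epsilon-\widetilde{\m}^\epsilon$, derive the error equation, and close an energy estimate: testing against the natural test function and exploiting the Gilbert damping $\alpha>0$ for coercivity, together with the $W^{1,6}$ bound and Gronwall's inequality, yields control of $\|\boldsymbol{\xi}^\varepsilon\|_{H^1}$ on an interval $(0,T^*)$ (resp.\ $(0,T^{**})$) whose length is dictated by the smallness needed to absorb the nonlinearity and is independent of $\varepsilon$. (v) Translate this into the stated bounds: the $H^1$ estimate of the corrected quantity follows from $\|\boldsymbol{\xi}^\varepsilon\|_{H^1}$ since the discarded $\varepsilon^2\m_2$ and renormalization terms are of order at most $\varepsilon$, while the sharper $L^2$ estimate of $\m^\epsilon-\m_0$ is obtained through a separate lower-order (duality-type) argument giving the improved exponent.

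The main obstacle I anticipate is step (ii), the boundary-layer analysis of the Neumann corrector. Unlike the explicit $Y$-periodic $\boldsymbol{\chi}$, whose gradient is $O(1)$ and bounded by $C_{\mathrm{coe}}$, the term $\boldsymbol{\Phi}^\epsilon-\bx$ develops an oscillatory layer of width $O(\varepsilon)$ where its gradient is large; showing that this layer contributes no worse than $\varepsilon^{1/2}$ in $L^2$, so that the final rate matches \cref{thm: convergence result of periodic case} rather than degrading, requires the elliptic regularity afforded by $\partial\Omega\in C^{1,1}$ and a careful cut-off and energy argument near the boundary. The second delicate point, shared with the Periodic case, is the nonlocal stray-field error in 3D: because $\hd$ is governed by the whole-space problem \eqref{stray field 3D}, its contribution cannot be localized, and the borderline behaviour of the associated singular integral is what forces the factor $\ln(\varepsilon^{-1}+1)$ and the reduced exponents $\varepsilon^{5/6}$ and $\varepsilon^{1/2}$. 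The two effects must be handled simultaneously in the energy estimate of step (iv), since the boundary layer of $\boldsymbol{\Phi}^\epsilon$ interacts with the nonlocal field through the residual.
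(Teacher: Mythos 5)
The paper offers no proof of this proposition at all --- it states it and defers entirely to \cite{CLS:apa:2022} (``The detailed proof of the above propositions can refer to...'') --- but your outline reproduces exactly the consistency--stability (Lax-type) framework that the paper uses for \cref{thm: convergence result of periodic case} and that the cited reference adapts to the Neumann setting with the corrector $\boldsymbol{\Phi}^\epsilon$, so your approach is essentially the intended one. The only points worth flagging are in your steps (i)--(ii): the ``compactness of the torus'' is unavailable for the Neumann problem on a general bounded domain, so the uniform $W^{1,6}$ bound must instead come from the boundary $W^{1,p}$ estimate for the oscillating Neumann problem under $\partial\Omega\in C^{1,1}$ (this is precisely what the paper's \cref{lemma: regularity for epslon} invokes from \cite{S:SIP:2018}); and the assertions that $\boldsymbol{\Phi}^\epsilon-\bx$ ``coincides'' with $\varepsilon\boldsymbol{\chi}(\bx/\epsilon)$ away from the boundary and that the discrepancy is ``supported in an $O(\varepsilon)$ strip'' are heuristics --- the discrepancy is global, and what the argument actually uses is the quantitative corrector estimate \eqref{phi inequality} together with the large-scale Lipschitz estimates of elliptic homogenization, not a literal localization.
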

%Note that the definition \eqref{neumann correct} gives rise to a multiscale elliptic problem concerning the function $\boldsymbol{\Phi}^\epsilon$, with natural boundary conditions. 

It is worth noting that a multiscale elliptic problem with natural boundary conditions about $\boldsymbol{\Phi}^\epsilon$ is given in \eqref{neumann correct}. 
Moreover, the spatial variable $\bx$ can be seen as the homogenized solution of this problem.
%The homogenized solution to this problem is the spatial variable $\bx$. 
By employing the elliptic homogenization theory, as outlined in Theorems 3.3.5 and 3.5.3 of \cite{S:SIP:2018}, the following inequality can be obtained 
\begin{equation}\label{phi inequality}
	\left\Vert \boldsymbol{\Phi}^\epsilon(\bx) - \bx - \epsilon\boldsymbol{\chi}\left(\frac{\bx}{\epsilon}\right) \right\Vert_{H^1(\Omega)} \le C \epsilon^{1/2}.
\end{equation}
Substituting \eqref{phi inequality} into Theorem \ref{thm:neumann}, the following proposition can be derived.
\begin{proposition}\label{prop:neumann}
	Under the condition in Theorem \ref{thm:neumann}, when $n=3$, there exists some $T^* \in(0, T]$ independent of $\varepsilon$, such that for any $t \in\left(0, T^*\right)$, it holds
	\begin{equation*}
		\begin{gathered}
			%\left\|\boldsymbol{m}^{\varepsilon}(\bx,t)-\boldsymbol{m}_0(\bx,t)\right\|_{L^2(\Omega)} \leq C \varepsilon^{\frac{5}{6}}\ln (\varepsilon^{-1} + 1),\\
			\left\|\boldsymbol{m}^{\varepsilon}(\bx,t)-\boldsymbol{m}_0(\bx,t)
			-\epsilon\boldsymbol{\chi}\left(\frac{\bx}{\epsilon}\right)\nabla\mathbf{m}_0(\bx, t)\right\|_{H^1(\Omega)} \leq C \varepsilon^{\frac{1}{2}}\ln (\varepsilon^{-1} + 1).
		\end{gathered}
	\end{equation*}
	Furthermore, when $n=2$, there exists some $T^{**} \in(0, T]$ independent of $\varepsilon$, such that for any $t \in\left(0, T^{**}\right)$, it holds
	\begin{equation*}
		\begin{gathered}
			%\left\|\boldsymbol{m}^{\varepsilon}(\bx, t)
			%-\boldsymbol{m}_0(\bx, t)\right\|_{L^2(\Omega)} %\leq C \varepsilon^{1},\\
			\left\|\boldsymbol{m}^{\varepsilon}(\bx, t)-\boldsymbol{m}_0(\bx, t)
			-\epsilon\boldsymbol{\chi}\left(\frac{\bx}{\epsilon}\right)\nabla\mathbf{m}_0(\bx, t)\right\|_{H^1(\Omega)} \leq C \varepsilon^{1/2}.
		\end{gathered}
	\end{equation*}
    In both case, the constant $C$ depends on $C_{\mathrm{coe}}, a_{\min }$ and $a_{\max }$ given in Assumptions \eqref{assumption: coefficient}-\eqref{assumption: initial data}, but is independent of $\varepsilon$.
\end{proposition}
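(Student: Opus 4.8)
The plan is to obtain \cref{prop:neumann} as a direct corollary of \cref{thm:neumann}, using a triangle inequality to trade the Neumann corrector $\boldsymbol{\Phi}^\epsilon-\bx$ for the periodic corrector $\epsilon\boldsymbol{\chi}(\bx/\epsilon)$, at the cost of the elliptic homogenization estimate \eqref{phi inequality}. The starting identity is the decomposition
\begin{equation*}
\boldsymbol{m}^{\varepsilon} - \boldsymbol{m}_0 - \epsilon\boldsymbol{\chi}\!\left(\tfrac{\bx}{\epsilon}\right)\nabla\boldsymbol{m}_0 = \Big(\boldsymbol{m}^{\varepsilon} - \boldsymbol{m}_0 - (\boldsymbol{\Phi}^\epsilon-\bx)\nabla\boldsymbol{m}_0\Big) + \Big(\boldsymbol{\Phi}^\epsilon - \bx - \epsilon\boldsymbol{\chi}\!\left(\tfrac{\bx}{\epsilon}\right)\Big)\nabla\boldsymbol{m}_0,
\end{equation*}
after which I would take $H^1(\Omega)$ norms. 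The first bracket is exactly the quantity estimated in \cref{thm:neumann}, hence bounded by $C\varepsilon^{1/2}\ln(\varepsilon^{-1}+1)$ for $n=3$ and by $C\varepsilon$ for $n=2$. All that remains is to control the second bracket, the product of the homogenization remainder $\boldsymbol{r}^\epsilon := \boldsymbol{\Phi}^\epsilon - \bx - \epsilon\boldsymbol{\chi}(\bx/\epsilon)$ with $\nabla\boldsymbol{m}_0$.

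For this product I would apply a Leibniz-rule estimate in $H^1$. Writing the contraction componentwise as $\sum_i r_i^\epsilon\,\partial_i\boldsymbol{m}_0$ and differentiating, one has
\begin{equation*}
\big\| \boldsymbol{r}^\epsilon \nabla\boldsymbol{m}_0 \big\|_{H^1(\Omega)} \le C\,\|\boldsymbol{r}^\epsilon\|_{L^2(\Omega)}\,\|\nabla\boldsymbol{m}_0\|_{W^{1,\infty}(\Omega)} + C\,\|\nabla\boldsymbol{r}^\epsilon\|_{L^2(\Omega)}\,\|\nabla\boldsymbol{m}_0\|_{L^{\infty}(\Omega)} \le C\,\|\boldsymbol{r}^\epsilon\|_{H^1(\Omega)}.
\end{equation*}
The $L^\infty$ control of $\nabla\boldsymbol{m}_0$ and $\nabla^2\boldsymbol{m}_0$ follows from the regularity $\boldsymbol{m}_0\in L^\infty(0,T;H^6(\Omega))$ together with the Sobolev embedding $H^6(\Omega)\hookrightarrow W^{2,\infty}(\Omega)$, valid for $n\le 3$, and these norms are quantitatively controlled by $C_{\mathrm{coe}}$ through Assumption \eqref{assumption: initial data}. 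Invoking \eqref{phi inequality}, namely $\|\boldsymbol{r}^\epsilon\|_{H^1(\Omega)}\le C\varepsilon^{1/2}$, then gives $\|\boldsymbol{r}^\epsilon\nabla\boldsymbol{m}_0\|_{H^1(\Omega)}\le C\varepsilon^{1/2}$ in both dimensions.

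Combining the two pieces closes the argument: for $n=3$ one gets $C\varepsilon^{1/2}\ln(\varepsilon^{-1}+1)+C\varepsilon^{1/2}\le C\varepsilon^{1/2}\ln(\varepsilon^{-1}+1)$, and for $n=2$ one gets $C\varepsilon+C\varepsilon^{1/2}\le C\varepsilon^{1/2}$, exactly the stated rates; note that it is precisely the corrector term $\boldsymbol{r}^\epsilon\nabla\boldsymbol{m}_0$ that degrades the two-dimensional rate from $\varepsilon$ down to $\varepsilon^{1/2}$. Since no new energy estimate is needed, the only genuine care required is bookkeeping of the $\varepsilon$-scaling: I expect the main (and only) subtlety to be verifying that the $\varepsilon^{-1}$ factor hidden in $\nabla_{\by}\boldsymbol{\chi}(\bx/\epsilon)$ is already absorbed into the $H^1$ norm quoted in \eqref{phi inequality}, so that differentiating the product introduces no resurgent negative power of $\varepsilon$. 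As \eqref{phi inequality} is imported directly from the elliptic homogenization theory of \cite{S:SIP:2018}, the proposition is essentially immediate from \cref{thm:neumann}.
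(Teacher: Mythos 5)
Your argument is exactly the paper's: the paper derives \cref{prop:neumann} by "substituting \eqref{phi inequality} into \cref{thm:neumann}," i.e., the same triangle-inequality decomposition trading $\boldsymbol{\Phi}^\epsilon-\bx$ for $\epsilon\boldsymbol{\chi}(\bx/\epsilon)$ and absorbing the remainder $\boldsymbol{r}^\epsilon\nabla\boldsymbol{m}_0$ via the $H^1$ product rule and the $O(\epsilon^{1/2})$ bound from elliptic homogenization theory. Your bookkeeping of the rates in both dimensions matches the stated conclusions, so the proposal is correct and essentially identical to the paper's route.
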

%This result implies that, 
When employing the classical two-scale approximation to the multiscale LLG equation \eqref{neumann} under Neumann boundary condition, the convergence near the boundary will exhibit a degradation and a $1/2$-order loss of convergence order.
%leading to a loss of convergence order by a factor of half.
\noindent The detailed proof of the above propositions can refer to \cite{CLS:apa:2022}.
%\begin{table}[b!]
%    \centering
%    \begin{tabular}{c|c|c|c}
%    \toprule[1.5pt]
%         \makecell[c]{Boundary \\Condition }  & Approximation
         %& Depth $\mathcal{D} =$ 
%         & Space & \makecell[c]{Regularity \\Condition }\\ 
%         \hline \rule{0pt}{16pt}
%         \multirow{2}{*}{Periodic} 
%         & \multirow{2}{*}{$\boldsymbol{m}_0(\bx)
%			+\epsilon\boldsymbol{\chi}(\bx/\epsilon)\nabla\mathbf{m}_0(\bx)$ }  
%         & $L^2(\Omega)$
%         & $\mathcal{O}( \epsilon)$ \cite{LR:CMS:2022}  \\
%         \cline{3-4} \rule{0pt}{16pt}
%          & 
%          & $H^1(\Omega)$ & $\mathcal{O}( \epsilon)$ (this work) \\
%           \cline{1-4} \rule{0pt}{16pt}
%         \multirow{2}{*}{Neumann} 
%         & $\boldsymbol{m}_0(\bx)
%			+\epsilon\boldsymbol{\chi}(\bx/\epsilon)\nabla\mathbf{m}_0(\bx)$ 
         %& \multirow{3}{*}{$ C\log (d+1)$}  
%         & $L^2(\Omega)$
%         & $\mathcal{O}( \epsilon^{1/2})$ \cite{CLS:apa:2022} \\
%         \cline{2-4} \rule{0pt}{16pt}
%          & $\boldsymbol{m}_0(\bx)
%			+\big(\boldsymbol{\Phi}^\epsilon(\bx) - \bx\big)\nabla\mathbf{m}_0(\bx)$ 
%          & $H^1(\Omega)$ & $\mathcal{O}( \epsilon)$ \cite{CLS:apa:2022}\\
%          \bottomrule[1.5pt]
%    \end{tabular}
%    \caption{ Theoretical convergence order for the case of two dimension and the exchange field along with degenerated stray field are considered, as demonstrated in \cite{LR:CMS:2022,CLS:apa:2022} and this work. }
%    \label{tab}
%\end{table}
\begin{remark}
	Comparing \eqref{convergence estimate of periodic} with \eqref{value of sigma 2}, it can be found that there are $1/6$-order loss in the $L^2$ norm and $1/2$-order loss in the $H^1$ norm. This degradation of convergence order is caused by boundary layer effects resulting from the zero extension of the stray fields \eqref{stray field 3D}.
% it shows that the stray field causes a $1/6$-order loss in the $L^2$ norm and a $1/2$-order loss in the $H^1$ norm. 
    %The degradation of convergence order is caused by the zero extension for the stray fields \eqref{stray field 3D}.
  %  The degradation of convergence order caused by the boundary layer effects due to the zero extension for the stray fields \eqref{stray field 3D}.
    %The zero extension for the stray fields \eqref{stray field 3D} results in  boundary layer effects, which subsequently leads to the degradation of the convergence order.
 
    %Due to the employment of zero extension for the stray fields, a boundary layer is introduced, which in turn leads to the degradation of the convergence order
    %Such a deterioration of convergence order is induced by the zero-extension of the stray field \eqref{stray field 3D}
    %Such a deterioration of convergence order is induced since the zero-extension has been applied for the stray field \eqref{stray field 3D}, which introduces a boundary layer.        
    Specifically, \cref{tab} presents the convergence orders of the 2D problems \eqref{eqn: Periodic problem}, \eqref{neumann} with the exchange field and degenerated stray field, where the two-scale corrector and the Neumann corrector are employed.
    
    \begin{table}[h!]
        \centering
        \begin{tabular}{c|c|c|c}
        \toprule[1.5pt]
             \makecell[c]{Boundary \\Condition }  & Approximation
             %& Depth $\mathcal{D} =$ 
             & Norm & \makecell[c]{Convergence \\Order }\\ 
             \hline \rule{0pt}{16pt}
             \multirow{2}{*}{Periodic} 
             & \multirow{3}{*}{\vspace{-3mm}$\boldsymbol{m}_0(\bx)
    			+\epsilon\boldsymbol{\chi}(\bx/\epsilon)\nabla\mathbf{m}_0(\bx)$ }
             %& \multirow{3}{*}{$ C\log (d+1)$}  
             & $L^2(\Omega)$
             & $\mathcal{O}( \epsilon)$ 
             \cite{LR:CMS:2022}\\
             \cline{3-4} \rule{0pt}{16pt}
              & 
              & $H^1(\Omega)$ & $\mathcal{O}( \epsilon)$ (this work) \\
               \cline{1-1} \cline{3-4}
               \rule{0pt}{16pt}
             \multirow{2}{*}{Neumann} 
             &  
             %& \multirow{3}{*}{$ C\log (d+1)$}  
             & $L^2(\Omega)$
             & $\mathcal{O}( \epsilon^{1/2})$ 
             \cite{CLS:apa:2022}\\
             \cline{2-4} \rule{0pt}{16pt}
              & $\boldsymbol{m}_0(\bx)
    			+\big(\boldsymbol{\Phi}^\epsilon(\bx) - \bx\big)\nabla\mathbf{m}_0(\bx)$ 
              & $H^1(\Omega)$ & $\mathcal{O}( \epsilon)$
              \cite{CLS:apa:2022}\\
              \bottomrule[1.5pt]
        \end{tabular}
        \caption{Demonstration of  convergence order for the 2D problem  with the exchange field and degenerated stray field.}
        \label{tab}
        %\vspace{-6mm}
    \end{table}
    
    %Specifically, the convergence orders of the 2D problems with the exchange field and degenerated stray field under both the two-scale corrector and the Neumann corrector are given in \cref{tab}.
    %Specifically, the convergence orders for both the two-scale corrector and the Neumann corrector are given in \cref{tab}, when considering the two dimensions with the degenerated stray field.
\end{remark}
\begin{remark}
    In \cref{thm: convergence result of periodic case}, 
    by choosing the correctors satisfying specific geometric property \eqref{geo property}, the results \eqref{value of sigma 2} show that the approximation \eqref{two scale approximate} has the same convergence order in $L^2$ and $H^1$ norm. Here, the result in $L_2$ norm of \eqref{value of sigma 2} is consistent with the result of \cite{LR:CMS:2022}, but only the uniform boundedness in $H^1$ norm has been obtained in \cite{LR:CMS:2022}. 
    %By choosing the correctors satisfying specific geometric property \eqref{geo property}, our results \eqref{value of sigma 2} imply that the approximation \eqref{two scale approximate} has the same convergence order in $L^2$ and $H^1$ norm, 
\end{remark}

\subsection{Proof of \cref{thm: convergence result of periodic case}}\label{prof thm1}
%Based on the Lax equivalence theorem-based argument
%Now, let us outline the proof of Theorem \ref{thm: convergence result of periodic case} via Lax equivalence theorem-based argument.
%Using the idea of the Lax equivalence theorem, the proof of \cref{thm: convergence result of periodic case} is outlined as follows.
The following proof is inspired by the Lax equivalence theorem \cite{LR:CPAM:1956}.
%Consider first the case $n=3$. We recall an equivalent form of the LLG equation: define the notation of the LLG operator as
For the 3D case, the LLG operator is defined as
\begin{equation}
	\left\{\begin{aligned}
		\mathcal{L}_{\mathrm{LLG}}(\mathbf{m}^\varepsilon,U^\varepsilon)
		:=&
		\partial_t\mathbf{m}^\varepsilon - \alpha \mathcal{H}^\varepsilon_e(\mathbf{m}^\varepsilon,U^\varepsilon)
		+ \mathbf{m}^\varepsilon \times \mathcal{H}^\varepsilon_e(\mathbf{m}^\varepsilon,U^\varepsilon)
		- \alpha  g_l^\varepsilon(\mathbf{m}^\varepsilon,U^\varepsilon)
		\mathbf{m}^\varepsilon,\\
		\mathcal{L}_{\mathrm{SF}}(\mathbf{m}^\varepsilon, U^\varepsilon)
		:=&
		\Delta U^\varepsilon + \operatorname{div}(M^\varepsilon\mathbf{m}^\varepsilon \mathcal{X}_{\Omega}),\quad \mbox{in $D'(\mathbb{R}^3)$},
	\end{aligned}\right.
\end{equation}
where the effective field $\mathcal{H}^\varepsilon_e$ takes the following form
\begin{equation}\label{mathcal H}
\mathcal{H}^\varepsilon_e(\mathbf{m}^\varepsilon,U^\varepsilon)
=
	\mathrm{div}\left(\ba^{\varepsilon} \nabla \mathbf{m}^{\varepsilon}\right)-K^{\varepsilon}\left(\mathbf{m}^{\varepsilon} \cdot \mathbf{u}\right) \mathbf{u}+M^\varepsilon\mathbf{h}_{\mathrm{a}}
	+\mu^\varepsilon \nabla U^\varepsilon,
\end{equation}
and the $g_l^\varepsilon[\cdot]$ is the energy density calculated by
\begin{equation}\label{energy density of epsilon}
	g_l^\varepsilon(\mathbf{m}^\varepsilon,U^\varepsilon) =  \ba^{\varepsilon} \vert \nabla \mathbf{m}^\varepsilon\vert^2 
	+ 
	K^{\varepsilon} \left( \mathbf{m}^\varepsilon\cdot \boldsymbol{u} \right)^2
	-
	\mu^\epsilon \nabla U^\varepsilon\cdot M^\varepsilon\mathbf{m}^\varepsilon
	-
	\mathbf{h}_a\cdot M^\varepsilon\mathbf{m}^\varepsilon .
\end{equation}
The classical solution $\mathbf{m}^\varepsilon$ of \eqref{rewrite LLG} with the stray field $U^\varepsilon$ satisfies
%the following equivalent equation
\begin{equation}\label{eqn:LLG system form 3}
	\mathcal{L}_{\mathrm{LLG}}(\mathbf{m}^\varepsilon,U^\varepsilon) = 0,
	\qquad 
	\mathcal{L}_{\mathrm{SF}}(\mathbf{m}^\varepsilon, U^\varepsilon) = 0.
\end{equation}
With the two-scale approximation $\widetilde{\mathbf{m}}^\varepsilon$ and $\widetilde{U}^\varepsilon$ defined in \eqref{def two scale approximate sln}, the approximate stray field $\Gamma^\varepsilon$ induced by $\widetilde{\mathbf{m}}^\varepsilon$ can be defined as
\begin{equation}\label{def gamma}
	\mathcal{L}_{\mathrm{SF}}(\widetilde{\mathbf{m}}^\varepsilon, \Gamma^\varepsilon) = 0.
\end{equation}
%Utilizing the above notations, we can deduce the concept of consistency error $\boldsymbol{\Theta}^\varepsilon_\mathrm{total}$:
Then, the consistency error $\boldsymbol{\Theta}^\varepsilon_\mathrm{total}$ is decomposed into two parts
\begin{equation}\label{eqn:equivalent system of m epsilon}
	\begin{aligned}
		\boldsymbol{\Theta}^\varepsilon_\mathrm{total}
		&:= 
		\mathcal{L}_{\mathrm{LLG}}\big(\widetilde{\mathbf{m}}^\varepsilon, \Gamma^\varepsilon\big)\\
		&=
		\mathcal{L}_{\mathrm{LLG}}\big(\widetilde{\mathbf{m}}^\varepsilon, \widetilde{U}^\varepsilon\big)
		+
		\big\{ \mathcal{L}_{\mathrm{LLG}}\big(\widetilde{\mathbf{m}}^\varepsilon, \Gamma^\varepsilon\big)
		-
		\mathcal{L}_{\mathrm{LLG}}\big(\widetilde{\mathbf{m}}^\varepsilon, \widetilde{U}^\varepsilon\big) \big\}\\
		&=:
		\boldsymbol{\Theta}^\varepsilon_\mathrm{ts}
		+
		\boldsymbol{\Theta}^\varepsilon_\mathrm{sf}.
	\end{aligned}
\end{equation}
where $\boldsymbol{\Theta}^\varepsilon_\mathrm{ts}$ is induced by the two-scale approximation, and $\boldsymbol{\Theta}^\varepsilon_\mathrm{sf}$ is induced by the stray field. 

On the other hand, the error $\e^\epsilon(\bx)$ and $\Psi^\epsilon(\bx)$ between the classical solution and the approximating solution is defined as
\begin{equation}\label{ansatz}
	\m^\epsilon(\bx) = 	\widetilde{\m}^\epsilon(\bx) + \e^\epsilon(\bx),
	\qquad 
	U^\epsilon(\bx) = \Gamma^\epsilon(\bx) + \Psi^\epsilon(\bx).
\end{equation}
By subtracting \eqref{eqn:equivalent system of m epsilon} from \eqref{eqn:LLG system form 3}, the equation about $\e^\epsilon$ and $\Psi^\epsilon$ is given by
%Now, let us take subtraction between \eqref{eqn:LLG system form 3} and \eqref{eqn:equivalent system of m epsilon}, and set up the equation of error $\e^\epsilon$, $\Phi^\epsilon$ as follows:
\begin{equation}\label{system of e}
	\left\{\begin{aligned}
		&\partial_t\mathbf{e}^\varepsilon - \alpha \widetilde{\mathcal{H}}^\varepsilon_e(\mathbf{e}^\varepsilon, \Psi^\varepsilon)
		-
		\mathbf{D}_1(\mathbf{e}^\varepsilon, \Psi^\varepsilon) - \mathbf{D}_2(\mathbf{e}^\varepsilon, \Psi^\varepsilon)
		= - \big(\boldsymbol{\Theta}^\varepsilon_\mathrm{ts}
		+
		\boldsymbol{\Theta}^\varepsilon_\mathrm{sf}\big),\\
		&\mathcal{L}_{\mathrm{SF}}(\mathbf{e}^\varepsilon, \Psi^\varepsilon) = 0.
	\end{aligned}\right.
\end{equation}
Here, $\widetilde{\mathcal{H}}^\varepsilon_e$ is the linear part of $\mathcal{H}^\varepsilon_e $, i.e.,
\begin{equation*}
	\begin{aligned}
		\widetilde{\mathcal{H}}^\varepsilon_e(\mathbf{e}^\varepsilon, \Psi^\varepsilon)
		& :=  \mathcal{H}^\varepsilon_e(\mathbf{e}^\varepsilon, \Psi^\varepsilon) -
		M^{\varepsilon} \mathbf{h}_a .
	\end{aligned}
\end{equation*}
The precession term $\mathbf{D}_1$ is calculated by
\begin{equation}\label{define D_1}
	\begin{aligned}
		\mathbf{D}_1 (\mathbf{e}^\varepsilon, \Psi^\varepsilon)&= 
		\mathbf{m}^\varepsilon \times \mathcal{H}^\varepsilon_e(\mathbf{m}^\varepsilon,U^\varepsilon)
		-
		\widetilde{\mathbf{m}}^\varepsilon \times \mathcal{H}^\varepsilon_e(\widetilde{\mathbf{m}}^\varepsilon, \Gamma^\varepsilon) \\
		&= 
		\mathbf{m}^\varepsilon \times \widetilde{\mathcal{H}}^\varepsilon_e(\mathbf{e}^\varepsilon, \Psi^\varepsilon)
		+
		\mathbf{e}^\varepsilon \times \mathcal{H}^\varepsilon_e(\widetilde{\mathbf{m}}^\varepsilon, \Gamma^\varepsilon),
	\end{aligned}
\end{equation}
and the degeneracy term $\mathbf{D}_2$ reads as
\begin{equation*}
	\begin{aligned}
		\mathbf{D}_2 (\mathbf{e}^\varepsilon, \Psi^\varepsilon)=& 
		- \alpha  g_l^\varepsilon(\mathbf{m}^\varepsilon,U^\varepsilon)
		\mathbf{m}^\varepsilon
		+
		\alpha  g_l^\varepsilon(\widetilde{\mathbf{m}}^\varepsilon, \Gamma^\varepsilon)
		\widetilde{\mathbf{m}}^\varepsilon \\
		=& -\alpha ( \nabla \mathbf{e}^\varepsilon \cdot \ba^{\varepsilon} \nabla \mathbf{m}^\varepsilon
		+
		\nabla \mathbf{e}^\varepsilon \cdot \ba^{\varepsilon} \nabla \widetilde{\mathbf{m}}^\varepsilon
		) \mathbf{m}^\varepsilon\\
		&-\alpha \big(
		K^{\varepsilon} ( \mathbf{m}^\varepsilon\cdot \boldsymbol{u} )( \mathbf{e}^\varepsilon\cdot \boldsymbol{u} )
		+ 
		K^{\varepsilon} ( \widetilde{\mathbf{m}}^\varepsilon\cdot \boldsymbol{u} )( \mathbf{e}^\varepsilon\cdot \boldsymbol{u} )
		\big) \mathbf{m}^\varepsilon\\
		& -\alpha \big( 
		\mu^\varepsilon \nabla U^\varepsilon\cdot \mathbf{e}^\varepsilon
		+ 
		\mu^\varepsilon \nabla \Gamma^\varepsilon\cdot \widetilde{\mathbf{m}}^\varepsilon 
		+ M^\varepsilon (\mathbf{h}_a\cdot \mathbf{e}^\varepsilon)\big) \mathbf{m}^\varepsilon
		- \alpha g_l^\varepsilon(\widetilde{\mathbf{m}}^\varepsilon, \Gamma^\varepsilon)  \mathbf{e}^\varepsilon.
	\end{aligned}
\end{equation*}
By employing the idea of Lax equivalence theorem, the estimate of errors $\e^\epsilon$ and $\Psi^\epsilon$ can be derived by two steps: the consistency analysis of \eqref{eqn:equivalent system of m epsilon}, and the stability analysis of \eqref{system of e}.

%We introduce the results of consistency analysis in the following lemma:
Firstly, the result of consistency analysis is given in the following lemma.
\begin{lemma}\label{thm: Consistency estimate of two-scale approximation}
    Suppose Assumptions \eqref{assumption: coefficient}-\eqref{assumption: initial data} hold. For the consistency error $\boldsymbol{\Theta}^\varepsilon_\mathrm{total} = \boldsymbol{\Theta}^\varepsilon_\mathrm{ts}
	+
	\boldsymbol{\Theta}^\varepsilon_\mathrm{sf}$ given in \eqref{eqn:equivalent system of m epsilon}, it holds
	\begin{equation*}
		\begin{aligned}
			&\Vert \boldsymbol{\Theta}^\varepsilon_\mathrm{ts}(\boldsymbol{x}) \Vert_{L^2(\Omega)} 
			\le C \varepsilon,\\
			&\Vert \boldsymbol{\Theta}^\varepsilon_\mathrm{sf} \Vert_{L^{r}(\Omega)}
			\le C_r
			\varepsilon^{1/r}\ln (\varepsilon^{-1} + 1) ,
			%\quad\text{and}\quad
		\end{aligned}
	\end{equation*}
	for any $1< r < \infty$.
	Here, the constants $C$ and $C_r$ depend on $C_{\mathrm{coe}}$ and $\Vert \mathbf{m}_0 \Vert_{W^{4,\infty}(\Omega)}$, but are independent of $\varepsilon$.
\end{lemma}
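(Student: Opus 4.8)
The plan is to substitute the two-scale ansatz $(\widetilde{\mathbf{m}}^\varepsilon, \widetilde{U}^\varepsilon)$ of \eqref{def two scale approximate sln} into the operator $\mathcal{L}_{\mathrm{LLG}}$ and to treat the two contributions $\boldsymbol{\Theta}^\varepsilon_\mathrm{ts}$ and $\boldsymbol{\Theta}^\varepsilon_\mathrm{sf}$ with different tools. Since $\mathcal{L}_{\mathrm{LLG}}(\widetilde{\mathbf{m}}^\varepsilon,\cdot)$ depends on the potential only through the term $\mu^\varepsilon\nabla U$ in $\mathcal{H}_e^\varepsilon$ and the corresponding term in $g_l^\varepsilon$, the stray-field defect reduces to $\boldsymbol{\Theta}^\varepsilon_\mathrm{sf} \sim \mu^\varepsilon\nabla(\Gamma^\varepsilon-\widetilde{U}^\varepsilon)$ (up to a factor linear in $\widetilde{\mathbf{m}}^\varepsilon$), whereas $\boldsymbol{\Theta}^\varepsilon_\mathrm{ts}$ is an entirely local computation in $(\bx,\by)$.

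For $\boldsymbol{\Theta}^\varepsilon_\mathrm{ts}$, I would expand every term of $\mathcal{L}_{\mathrm{LLG}}(\widetilde{\mathbf{m}}^\varepsilon,\widetilde{U}^\varepsilon)$ in powers of $\varepsilon$ through the chain rule $\nabla = \nabla_\bx + \varepsilon^{-1}\nabla_\by$ and the splitting $\mathcal{A}_\varepsilon = \varepsilon^{-2}\mathcal{A}_0 + \varepsilon^{-1}\mathcal{A}_1 + \mathcal{A}_2$ of \eqref{twoscale-expansion}, and then verify the cancellation order by order. The $\varepsilon^{-2}$ term vanishes because $\mathbf{m}_0 = \mathbf{m}_0(\bx)$ gives $\mathcal{A}_0\mathbf{m}_0 = 0$; the $\varepsilon^{-1}$ contribution to $\mathcal{H}_e^\varepsilon$ vanishes identically, since the first-order cell problem \eqref{first-order cell problem} forces $\mathcal{A}_1\mathbf{m}_0 + \mathcal{A}_0\mathbf{m}_1 = 0$; and the $\varepsilon^0$ contribution is killed by combining the definition of $\mathbf{m}_2$ through \eqref{Orthometric direction of A0m2}--\eqref{second-order corrector} with the homogenized equation \eqref{eqn:homogenized LLG system short}. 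Here the geometric ansatz \eqref{geo property} is essential: it enforces $|\widetilde{\mathbf{m}}^\varepsilon| = 1 + O(\varepsilon^3)$, so the identity $g_l^\varepsilon = -\widetilde{\mathbf{m}}^\varepsilon\cdot\mathcal{H}_e^\varepsilon$ (valid up to higher order) makes the components of $\mathcal{A}_0\mathbf{m}_2 + \mathbf{h}$ parallel to $\mathbf{m}_0$ cancel between the damping term $-\alpha\mathcal{H}_e^\varepsilon$ and the degeneracy term $-\alpha g_l^\varepsilon\widetilde{\mathbf{m}}^\varepsilon$, leaving exactly the homogenized Landau--Lifshitz operator, which is zero. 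What survives is of order $\varepsilon$ and $\varepsilon^2$, with coefficients built from products of the periodic auxiliary functions $\boldsymbol{\chi},\boldsymbol{\theta},\rho,\boldsymbol{\Lambda},U^*$ (bounded in $C^2(Y)$ by $C_{\mathrm{coe}}$) and spatial derivatives of $\mathbf{m}_0$; the top-order term $\varepsilon^2\mathcal{A}_2\mathbf{m}_2 \sim \varepsilon^2\nabla_\bx^4\mathbf{m}_0$ is where the fourth derivatives enter. Taking the $L^2(\Omega)$ norm and invoking $\|\mathbf{m}_0\|_{W^{4,\infty}(\Omega)}$ (controlled by $H^6$ via Sobolev embedding, Assumption \eqref{assumption: initial data}) then gives $\|\boldsymbol{\Theta}^\varepsilon_\mathrm{ts}\|_{L^2(\Omega)} \le C\varepsilon$.

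For $\boldsymbol{\Theta}^\varepsilon_\mathrm{sf}$, set $W^\varepsilon := \Gamma^\varepsilon - \widetilde{U}^\varepsilon$. Subtracting the identity $\mathcal{L}_{\mathrm{SF}}(\widetilde{\mathbf{m}}^\varepsilon,\Gamma^\varepsilon)=0$ of \eqref{def gamma} from $\Delta\widetilde{U}^\varepsilon$ (expanded by the chain rule) and using the cell relation $\Delta_\by U_1 + \nabla_\by M_s\cdot(\mathbf{m}_0\mathcal{X}_\Omega) = 0$ together with the second line of \eqref{order of 1}, the interior contributions cancel up to order $\varepsilon$, so $W^\varepsilon$ solves a Poisson problem on $\mathbb{R}^3$ whose dominant source is concentrated in an $O(\varepsilon)$-neighborhood of $\partial\Omega$. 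This layer arises because the divergence in \eqref{stray field 3D} hits the jump of the zero extension $\mathcal{X}_\Omega$, producing surface terms on $\partial\Omega$ where the oscillatory field $M^\varepsilon\widetilde{\mathbf{m}}^\varepsilon$ and its cell-averaged counterpart disagree within a strip of width $\varepsilon$. Estimating $\nabla W^\varepsilon$ in $L^r(\Omega)$ is then a Calder\'on--Zygmund problem for the Newtonian potential with a boundary-layer source: the $L^r$ norm scales like the measure of the layer to the power $1/r$, i.e.\ $\varepsilon^{1/r}$, while the borderline behaviour of the gradient of the associated single-layer potential contributes the factor $\ln(\varepsilon^{-1}+1)$. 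Feeding this back through the $U$-dependent terms of $\mathcal{H}_e^\varepsilon$ and $g_l^\varepsilon$ yields $\|\boldsymbol{\Theta}^\varepsilon_\mathrm{sf}\|_{L^r(\Omega)} \le C_r\varepsilon^{1/r}\ln(\varepsilon^{-1}+1)$.

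The main obstacle is the stray-field estimate. The bound on $\boldsymbol{\Theta}^\varepsilon_\mathrm{ts}$ is, once the order-by-order cancellation is confirmed, essentially careful bookkeeping of local quantities. By contrast, $\boldsymbol{\Theta}^\varepsilon_\mathrm{sf}$ requires controlling a nonlocal operator whose source is genuinely singular at $\partial\Omega$ because of the zero extension; isolating precisely which terms of the defect live in the $O(\varepsilon)$ boundary layer, and then extracting the sharp $\varepsilon^{1/r}$ scaling together with the logarithmic loss from the borderline potential estimate, is the crux of the argument. This boundary-layer mechanism is exactly what later produces the $1/6$-order loss in $L^2$ and the $1/2$-order loss in $H^1$ reported in \cref{thm: convergence result of periodic case}.
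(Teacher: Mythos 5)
Your proposal follows essentially the same route as the paper: for $\boldsymbol{\Theta}^\varepsilon_\mathrm{ts}$ the paper likewise expands in powers of $\varepsilon$, checks that the $\varepsilon^{-2}$ and $\varepsilon^{-1}$ orders vanish via the cell problems, and kills the $\varepsilon^{0}$ order by combining the homogenized equation, the identity $g(\bx,\by)=-\m_0\cdot\bh+\m_1\cdot\mathcal{A}_0\m_1+\ba|\nabla_{\by}\m_1|^2$ (a consequence of the geometric ansatz) and the defining equation \eqref{eqn:system m2 form3} for $\m_2$, before bounding the surviving $O(\varepsilon)$ and $O(\varepsilon^2)$ remainders in $L^2$ by $C_{\mathrm{coe}}$ and $\Vert\m_0\Vert_{W^{4,\infty}}$. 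For $\boldsymbol{\Theta}^\varepsilon_\mathrm{sf}$ the paper simply cites \cite{CLS:apa:2022} rather than giving an argument, and your boundary-layer/singular-integral sketch is consistent with the method used there.
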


Secondly, the results of stability analysis are presented in \cref{theorem:stability} and \cref{regularety 3}.
%The stability analysis is presented in the next lemma:
\begin{lemma}\label{theorem:stability}%(Stability in $L^{2}(\Omega)$)
	Let $\mathbf{e}^\varepsilon \in L^\infty(0,T;H^2(\Omega))$ be a strong solution to \eqref{system of e}. Suppose Assumptions \eqref{assumption: coefficient}-\eqref{assumption: initial data} hold. For the 3D case, it holds
	\begin{equation}\label{ineq of e^eps_b n=3}
		\begin{aligned}
			\Vert \mathbf{e}^\varepsilon \Vert^2_{L^\infty(0,T;L^2(\Omega))} 
			\le  
			C \big( 
			\Vert \mathbf{e}^\varepsilon(\boldsymbol{x},0) \Vert_{L^{2}(\Omega)}^2
			+
			\Vert\boldsymbol{\Theta}^\varepsilon_\mathrm{ts} \Vert_{L^2(0,T; L^{2} (\Omega))}^2
			+
			\Vert\boldsymbol{\Theta}^\varepsilon_\mathrm{sf} \Vert_{L^2(0,T; L^{6/5}(\Omega))}^2 \big) ,
		\end{aligned}
	\end{equation}
	and	\begin{equation}\label{stability result in H^1}
		\begin{aligned}
			\Vert \nabla \mathbf{e}^\varepsilon \Vert^2_{L^\infty(0,T;L^2(\Omega))} 
			\le 
			C \big( \Vert \mathbf{e}^\varepsilon(\boldsymbol{x},0) \Vert_{H^{1}(\Omega)}^2
			+
			\Vert\boldsymbol{\Theta}^\varepsilon_\mathrm{ts} \Vert_{L^2(0,T; L^{2} (\Omega))}^2
			+
			\Vert\boldsymbol{\Theta}^\varepsilon_\mathrm{sf} \Vert_{L^2(0,T; L^{2}(\Omega))}^2 \big),
		\end{aligned}
	\end{equation}
%	Moreover, for the case $n=2$, it holds the estimate:
%	\begin{equation}\label{ineq of e^eps_b n=2}
%		\begin{aligned}
%			\Vert \mathbf{e}^\varepsilon \Vert^2_{L^\infty(0,T;H^1(\Omega))} 
%			\le  &
%			C \big( 
%			\Vert \mathbf{e}^\varepsilon(\boldsymbol{x},0) \Vert_{H^{1}(\Omega)}^2
%			+
			%\Vert\boldsymbol{\Theta}^\varepsilon_\mathrm{two} \Vert_{L^2(0,T; L^{2} (\Omega))}  \big) .
%		\end{aligned}
%	\end{equation}
	where the constant $C$ depends on $C_{\mathrm{coe}}$, $\Vert \mathbf{m}_0 \Vert_{W^{4,\infty}(\Omega)}$ and $\Vert \nabla \m^\epsilon \Vert_{L^{6}(\Omega)}$.
	%	where $C$ depends on $\Vert \nabla \mathbf{m}^\varepsilon  \Vert_{L^{6}(\Omega)}^2$, $\Vert \nabla \widetilde{\mathbf{m}}^\varepsilon  \Vert_{L^{6}(\Omega)}^2$,  $\Vert \nabla U^\varepsilon  \Vert_{L^{6}(\Omega)}^2$,  $\Vert \nabla \Gamma^\varepsilon_\mathrm{b}  \Vert_{L^{6}(\Omega)}^2$,  $\Vert \mathbf{m}^\varepsilon  \Vert_{L^{\infty}(\Omega)}^2$, and  $\Vert \widetilde{\mathbf{m}}^\varepsilon_\mathrm{b} \Vert_{L^{\infty}(\Omega)}^2$.
\end{lemma}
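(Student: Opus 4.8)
The plan is to run the standard energy method on the error system \eqref{system of e}, exploiting the dissipative structure of the exchange operator $\mathcal{A}_\varepsilon\mathbf{e}^\varepsilon := \mathrm{div}(\ba^\varepsilon\nabla\mathbf{e}^\varepsilon)$ together with the geometric orthogonality of the Landau--Lifshitz precession term. For the $L^2$ estimate \eqref{ineq of e^eps_b n=3} I would test the first equation of \eqref{system of e} against $\mathbf{e}^\varepsilon$. The time derivative produces $\tfrac12\tfrac{d}{dt}\|\mathbf{e}^\varepsilon\|_{L^2(\Omega)}^2$; the linear exchange part of $\widetilde{\mathcal{H}}^\varepsilon_e$ yields, after integration by parts with boundary terms vanishing by periodicity, the dissipation $-\alpha\int\ba^\varepsilon\nabla\mathbf{e}^\varepsilon\cdot\nabla\mathbf{e}^\varepsilon \le -\alpha a_{\min}\|\nabla\mathbf{e}^\varepsilon\|_{L^2(\Omega)}^2$; and the top-order precession term hidden in $\mathbf{D}_1$, namely $\int(\mathbf{m}^\varepsilon\times\mathcal{A}_\varepsilon\mathbf{e}^\varepsilon)\cdot\mathbf{e}^\varepsilon$, is integrated by parts. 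Its diagonal piece $\sum_{i,j}a^\varepsilon_{ij}\,\mathbf{m}^\varepsilon\cdot(\partial_j\mathbf{e}^\varepsilon\times\partial_i\mathbf{e}^\varepsilon)$ vanishes pointwise by symmetry of $\ba^\varepsilon$ against the antisymmetry of the triple product, leaving a remainder of the schematic form $\int(\nabla\mathbf{m}^\varepsilon\times\ba^\varepsilon\nabla\mathbf{e}^\varepsilon)\cdot\mathbf{e}^\varepsilon$. I would bound this by $\|\nabla\mathbf{m}^\varepsilon\|_{L^6(\Omega)}\|\nabla\mathbf{e}^\varepsilon\|_{L^2(\Omega)}\|\mathbf{e}^\varepsilon\|_{L^3(\Omega)}$ and, via the Gagliardo--Nirenberg inequality $\|\mathbf{e}^\varepsilon\|_{L^3(\Omega)}\le C\|\mathbf{e}^\varepsilon\|_{L^2(\Omega)}^{1/2}\|\mathbf{e}^\varepsilon\|_{H^1(\Omega)}^{1/2}$ in three dimensions, reduce it to $\|\nabla\mathbf{e}^\varepsilon\|_{L^2(\Omega)}^{3/2}\|\mathbf{e}^\varepsilon\|_{L^2(\Omega)}^{1/2}$, which Young's inequality (exponents $4/3$ and $4$) splits into a small multiple of $\|\nabla\mathbf{e}^\varepsilon\|_{L^2(\Omega)}^2$ absorbed by the dissipation plus $C\|\mathbf{e}^\varepsilon\|_{L^2(\Omega)}^2$.

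The remaining pieces are lower order. The stray-field coupling is handled by testing $\mathcal{L}_{\mathrm{SF}}(\mathbf{e}^\varepsilon,\Psi^\varepsilon)=0$ to get the elliptic bound $\|\nabla\Psi^\varepsilon\|_{L^2(\mathbb{R}^3)}\le C\|\mathbf{e}^\varepsilon\|_{L^2(\Omega)}$, which controls the term $\mu^\varepsilon\nabla\Psi^\varepsilon$; the anisotropy and external-field contributions are zeroth order in $\mathbf{e}^\varepsilon$; and the degeneracy term $\mathbf{D}_2$ is estimated by the same Hölder--Gagliardo--Nirenberg--Young chain using the $\|\nabla\mathbf{m}^\varepsilon\|_{L^6(\Omega)}$ and $\|\m_0\|_{W^{4,\infty}(\Omega)}$ bounds. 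The forcing is split as $\int\boldsymbol{\Theta}^\varepsilon_{\mathrm{ts}}\cdot\mathbf{e}^\varepsilon\le\|\boldsymbol{\Theta}^\varepsilon_{\mathrm{ts}}\|_{L^2(\Omega)}\|\mathbf{e}^\varepsilon\|_{L^2(\Omega)}$ and $\int\boldsymbol{\Theta}^\varepsilon_{\mathrm{sf}}\cdot\mathbf{e}^\varepsilon\le\|\boldsymbol{\Theta}^\varepsilon_{\mathrm{sf}}\|_{L^{6/5}(\Omega)}\|\mathbf{e}^\varepsilon\|_{L^6(\Omega)}$, the Sobolev embedding $H^1(\Omega)\hookrightarrow L^6(\Omega)$ together with Young absorbing the $H^1$ factor; this is exactly why $\boldsymbol{\Theta}^\varepsilon_{\mathrm{sf}}$ enters in the $L^{6/5}$ norm here. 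Collecting everything gives a differential inequality $\tfrac{d}{dt}\|\mathbf{e}^\varepsilon\|_{L^2(\Omega)}^2 \le C\|\mathbf{e}^\varepsilon\|_{L^2(\Omega)}^2 + C(\|\boldsymbol{\Theta}^\varepsilon_{\mathrm{ts}}\|_{L^2(\Omega)}^2 + \|\boldsymbol{\Theta}^\varepsilon_{\mathrm{sf}}\|_{L^{6/5}(\Omega)}^2)$, and Grönwall's inequality yields \eqref{ineq of e^eps_b n=3}.

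For the $H^1$ estimate \eqref{stability result in H^1} I would test against $-\mathcal{A}_\varepsilon\mathbf{e}^\varepsilon$, which is legitimate since $\mathbf{e}^\varepsilon\in L^\infty(0,T;H^2(\Omega))$. The time term becomes $\tfrac12\tfrac{d}{dt}\int\ba^\varepsilon\nabla\mathbf{e}^\varepsilon\cdot\nabla\mathbf{e}^\varepsilon$, an $H^1$-equivalent quantity by coercivity and symmetry of $\ba^\varepsilon$; the linear exchange part supplies the strong dissipation $\alpha\|\mathcal{A}_\varepsilon\mathbf{e}^\varepsilon\|_{L^2(\Omega)}^2$; and, crucially, the highest-order precession term $\int(\mathbf{m}^\varepsilon\times\mathcal{A}_\varepsilon\mathbf{e}^\varepsilon)\cdot\mathcal{A}_\varepsilon\mathbf{e}^\varepsilon$ vanishes identically because $\mathbf{m}^\varepsilon\times\mathbf{v}\perp\mathbf{v}$. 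What remains are products of $\nabla\mathbf{m}^\varepsilon$ (or $\nabla\widetilde{\mathbf{m}}^\varepsilon$), $\nabla\mathbf{e}^\varepsilon$, $\mathbf{e}^\varepsilon$ and $\mathcal{A}_\varepsilon\mathbf{e}^\varepsilon$, which I bound by Hölder and Gagliardo--Nirenberg so that each occurrence of $\|\mathcal{A}_\varepsilon\mathbf{e}^\varepsilon\|_{L^2(\Omega)}$ appears to a strictly sub-quadratic power and is absorbed by Young into the dissipation reservoir. The forcing is now estimated by $\|\boldsymbol{\Theta}^\varepsilon_{\mathrm{ts}}\|_{L^2(\Omega)}\|\mathcal{A}_\varepsilon\mathbf{e}^\varepsilon\|_{L^2(\Omega)}$ and $\|\boldsymbol{\Theta}^\varepsilon_{\mathrm{sf}}\|_{L^2(\Omega)}\|\mathcal{A}_\varepsilon\mathbf{e}^\varepsilon\|_{L^2(\Omega)}$, which is why $\boldsymbol{\Theta}^\varepsilon_{\mathrm{sf}}$ enters in $L^2$ rather than $L^{6/5}$. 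A final Grönwall argument, combined with the already-established $L^2$ bound, delivers \eqref{stability result in H^1}.

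The main obstacle I anticipate is the borderline treatment of the degeneracy term $\mathbf{D}_2$ and the cross-derivative precession remainder in the $H^1$ step: these are genuinely quadratic in the gradients and, in three dimensions, sit exactly at the threshold of what the dissipation $\alpha\|\mathcal{A}_\varepsilon\mathbf{e}^\varepsilon\|_{L^2(\Omega)}^2$ can absorb. Closing them rests on the uniform $W^{1,6}$ bound for $\mathbf{m}^\varepsilon$ (hence the dependence of $C$ on $\|\nabla\mathbf{m}^\varepsilon\|_{L^6(\Omega)}$), and one must verify that every Gagliardo--Nirenberg exponent produces a power of $\|\mathcal{A}_\varepsilon\mathbf{e}^\varepsilon\|_{L^2(\Omega)}$ strictly below $2$, so that Young's inequality leaves a positive coefficient on the dissipation; this exponent bookkeeping is the delicate part of the argument.
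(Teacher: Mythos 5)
Your proposal is correct, and the $L^2$ step is essentially identical to the paper's: test against $\mathbf{e}^\varepsilon$, kill the diagonal part of the precession term by the antisymmetry of the triple product against the symmetry of $\ba^\varepsilon$, control the remainder through $\Vert\nabla\m^\epsilon\Vert_{L^6(\Omega)}$ and Gagliardo--Nirenberg, pair $\boldsymbol{\Theta}^\varepsilon_\mathrm{sf}$ in $L^{6/5}$ with $\Vert\mathbf{e}^\varepsilon\Vert_{L^6}\lesssim\Vert\mathbf{e}^\varepsilon\Vert_{H^1}$, absorb by Young, and close with Gr\"{o}nwall.

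For the $H^1$ step you diverge in one respect: you multiply by $-\mathcal{A}_\varepsilon\mathbf{e}^\varepsilon$, whereas the paper multiplies by the full linearized effective field $\widetilde{\mathcal{H}}^\varepsilon_e(\mathbf{e}^\varepsilon,\Psi^\varepsilon)$. The paper's choice turns the time term into $\frac{\d}{\d t}\mathcal{G}_{\mathcal{L}}^\epsilon[\mathbf{e}^\epsilon,\Psi^\epsilon]$ (the Lyapunov energy) and annihilates the \emph{entire} term $\mathbf{m}^\varepsilon\times\widetilde{\mathcal{H}}^\varepsilon_e$ in $\mathbf{D}_1$ at once via $\mathbf{m}^\epsilon\times\mathbf{v}\perp\mathbf{v}$; the price is that one must invoke the lower bound $\mathcal{G}_{\mathcal{L}}^\epsilon\ge\frac{a_{\mathrm{min}}}{2}\Vert\nabla\mathbf{e}^\epsilon\Vert_{L^2}^2-C$ and the equivalence of $\Vert\widetilde{\mathcal{H}}^\varepsilon_e\Vert_{L^2}$ with $\Vert\mathcal{A}_\varepsilon\mathbf{e}^\varepsilon\Vert_{L^2}$ modulo lower-order terms to recover the elliptic control. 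Your choice gives the weighted Dirichlet energy and the dissipation $\alpha\Vert\mathcal{A}_\varepsilon\mathbf{e}^\varepsilon\Vert_{L^2}^2$ directly, but only the top-order precession term cancels, so you must additionally estimate the cross terms $\int\mathbf{m}^\varepsilon\times\bigl(-K^\varepsilon(\mathbf{e}^\varepsilon\cdot\mathbf{u})\mathbf{u}+\mu^\varepsilon\nabla\Psi^\varepsilon\bigr)\cdot\mathcal{A}_\varepsilon\mathbf{e}^\varepsilon$; these are harmless since $\Vert\nabla\Psi^\varepsilon\Vert_{L^2}\lesssim\Vert\mathbf{e}^\varepsilon\Vert_{L^2}$, so both routes close. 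The one point you should make explicit is the same one the paper leans on: bounding $\Vert\nabla\mathbf{e}^\varepsilon\Vert_{L^3}$ by interpolation requires $\Vert\nabla\mathbf{e}^\varepsilon\Vert_{L^6}\lesssim\Vert\mathcal{A}_\varepsilon\mathbf{e}^\varepsilon\Vert_{L^2}+\Vert\nabla\mathbf{e}^\varepsilon\Vert_{L^2}$ with a constant uniform in $\varepsilon$, which is exactly \cref{lemma: regularity for epslon}; your exponent bookkeeping ($3/2<2$ on the dissipation factor) then goes through as you anticipate.
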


Together with Lemma \ref{thm: Consistency estimate of two-scale approximation} and Lemma \ref{theorem:stability}, the estimate of $\Vert \e^\epsilon \Vert^2_{L^\infty(0,T;L^2(\Omega))} $ and $\Vert \nabla \e^\epsilon \Vert^2_{L^{\infty}(0,T; L^{2} (\Omega))}$ can be obtained with Gr\"{o}nwall's inequality:
\begin{equation}\label{eqn:conclution in thm 1}
	\begin{aligned}
		&\Vert \e^\epsilon(\cdot,t) \Vert_{L^\infty(0,T;L^2(\Omega))}
		\le C(\Vert \nabla \m^\epsilon \Vert_{L^{6}(\Omega)}) \,\epsilon^{5/6} \ln (\epsilon^{-1} + 1), \\\
		&\Vert \e^\epsilon(\cdot,t) \Vert_{L^\infty(0,T;H^1(\Omega))}
		\le C(\Vert \nabla \m^\epsilon \Vert_{L^{6}(\Omega)}) \,\epsilon^{1/2}\ln (\epsilon^{-1} + 1).
	\end{aligned}
\end{equation}
However, the above constants depend on the bound of $\Vert \nabla \m^\epsilon \Vert_{L^{6}(\Omega)}$. In order to derive an $\epsilon$-independent estimate, the following lemma is introduced. 
\begin{lemma}\label{regularety 3}
	Under Assumptions \eqref{assumption: coefficient}-\eqref{assumption: initial data},
	%Let $\m^\epsilon\in L^2([0,T];H^3(\Omega))$ be a solution to \eqref{eqn:LLG system}, then 
	there exists $T^*\in(0, T]$ independent of $\epsilon$, such that for $0\le t\le T^*$,
	\begin{equation*}
		\Vert \nabla \m^\epsilon  (\cdot, t)\Vert_{L^{6}(\Omega)}^2
		\le
		C,
	\end{equation*}
	where the constant $C$ depends on $C_{\mathrm{coe}}$, $a_{\mathrm{min}}$ and $a_{\mathrm{max}}$, but is independent of $\epsilon$ and $t$.
\end{lemma}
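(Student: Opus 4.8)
The plan is to prove \cref{regularety 3} as a genuinely $\varepsilon$-independent bound by reformulating the Landau--Lifshitz--Gilbert equation in its Gilbert (quasilinear parabolic) form and then feeding it into the uniform interior estimates of periodic homogenization. First I would use the constraint $\abs{\m^\epsilon}=1$ to eliminate the implicit time derivative: taking the cross product of \eqref{multi model} with $\m^\epsilon$ and combining with the original equation yields $\partial_t\m^\epsilon = -\m^\epsilon\times \mathbf{h}_{\mathrm{eff}}^\epsilon + \alpha\big(\mathbf{h}_{\mathrm{eff}}^\epsilon-(\m^\epsilon\cdot\mathbf{h}_{\mathrm{eff}}^\epsilon)\m^\epsilon\big)$. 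Writing $\mathbf{h}_{\mathrm{eff}}^\epsilon=\bdiv(\ba^\epsilon\nabla\m^\epsilon)+\mathbf{h}_\ell$ with $\mathbf{h}_\ell$ the (lower-order) anisotropy, stray, and external contributions, and invoking the two pointwise identities $\m^\epsilon\cdot\bdiv(\ba^\epsilon\nabla\m^\epsilon)=-\ba^\epsilon\nabla\m^\epsilon:\nabla\m^\epsilon$ and, for the symmetric coefficient of Assumption~\eqref{assumption: coefficient}, $\m^\epsilon\times\bdiv(\ba^\epsilon\nabla\m^\epsilon)=\bdiv\big(\ba^\epsilon(\m^\epsilon\times\nabla\m^\epsilon)\big)$, I arrive at a system whose leading part is $\alpha\,\bdiv(\ba^\epsilon\nabla\m^\epsilon)$ together with the skew divergence-form term $-\bdiv\big(\ba^\epsilon(\m^\epsilon\times\nabla\m^\epsilon)\big)$, a quadratic reaction term of size $\abs{\nabla\m^\epsilon}^2\m^\epsilon$, and lower-order terms. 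This is the form on which all subsequent estimates are performed.

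Next I would record the $\varepsilon$-uniform energy estimates. Since \eqref{multi model} is a perturbed gradient flow of the micromagnetic energy, testing the Gilbert form with $\partial_t\m^\epsilon$ gives, uniformly in $\varepsilon$, the bounds $\Vert\nabla\m^\epsilon\Vert_{L^\infty(0,T;L^2(\Omega))}\le C$ and $\Vert\partial_t\m^\epsilon\Vert_{L^2(0,T;L^2(\Omega))}\le C$; the initial energy is controlled uniformly by Assumption~\eqref{assumption: initial data}, since the expansion \eqref{define corrector for initial data} keeps $\nabla\m_{\mathrm{init}}^\epsilon$ bounded in $L^2$. A refined short-time estimate (e.g. differentiating in time, which does not touch the time-independent $\ba^\epsilon$) then promotes this to $\partial_t\m^\epsilon\in L^\infty(0,T^*;L^2(\Omega))$. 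I emphasize that I would deliberately \emph{not} attempt a uniform $H^2$ bound: because $\m^\epsilon\approx\m_0+\epsilon\boldsymbol{\chi}(\bx/\epsilon)\nabla\m_0$, the second derivatives of the corrector scale like $\epsilon^{-1}$, so $\Vert\nabla^2\m^\epsilon\Vert_{L^2}$ genuinely blows up; the target $W^{1,6}$ bound is a homogenization feature that must be captured without passing through $W^{2,2}$.

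The heart of the argument is the interior estimate combined with the compactness of the torus. Freezing time, I view $\m^\epsilon(\cdot,t)$ as the solution of the elliptic system $-\bdiv(\ba^\epsilon\nabla\m^\epsilon)=\mathbf{g}$, where $\mathbf{g}$ gathers $\tfrac1\alpha\partial_t\m^\epsilon$ (now in $L^2$ uniformly), the quadratic gradient term, and lower-order pieces, with the skew term kept inside the operator so the oscillation enters only through $\ba^\epsilon$. On each ball I apply the uniform interior $W^{1,p}$ (Avellaneda--Lin type) estimate for periodic homogenization, as provided by \cite{S:SIP:2018}; crucially its constant depends only on $a_{\min},a_{\max}$ and the $C^2(Y)$ bound $C_{\mathrm{coe}}$ of $\ba$ in the \emph{fast} variable, never on $\nabla\ba^\epsilon\sim\epsilon^{-1}$. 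Identifying $\Omega=[0,1]^n$ with the torus (no boundary for the Periodic problem), I cover it by finitely many balls and sum the local estimates into a global $\varepsilon$-independent one. A bootstrap then upgrades the integrability of $\nabla\m^\epsilon$ from the energy level $L^2$ towards $L^6$, using that in three dimensions an $L^2$ source drives the gain $L^2\to W^{1,6}$, while the quadratic term $\abs{\nabla\m^\epsilon}^2$ is absorbed using its short-time smallness relative to the uniformly $W^{1,\infty}$ homogenized profile $\m_0$.

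The hard part is precisely this last absorption: the nonlinearity $\abs{\nabla\m^\epsilon}^2$ is critical (indeed supercritical) with respect to the $L^2$ energy in three dimensions, so it cannot be bootstrapped from the energy bound alone, and at the same time the $\epsilon$-oscillation destroys the uniformity of both the $H^2$ estimate and the classical Calder\'on--Zygmund/VMO estimates (the VMO modulus of $\ba^\epsilon$ stays of order one as $\epsilon\to0$). The resolution I would pursue is to close the integrability bootstrap only on a short interval $[0,T^*]$, where a Gr\"onwall argument keeps the quadratic term subordinate and the homogenization interior estimate supplies the $\varepsilon$-uniform constants; this is exactly why $T^*$ must be chosen small and independent of $\varepsilon$. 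Establishing the uniform $L^6$ bound here is what breaks the apparent circularity with \cref{theorem:stability}, whose constant in \eqref{eqn:conclution in thm 1} otherwise depends on $\Vert\nabla\m^\epsilon\Vert_{L^6(\Omega)}$.
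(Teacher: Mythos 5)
Your proposal shares two essential ingredients with the paper's argument: the uniform $W^{1,p}$ estimate for the oscillatory operator $\mathcal{A}_\epsilon$ (Proposition \ref{lemma: regularity for epslon}, taken from \cite{S:SIP:2018}), which converts control of $\Vert\mathcal{A}_\epsilon\m^\epsilon\Vert_{L^2}$ into control of $\Vert\nabla\m^\epsilon\Vert_{L^6}$ with an $\epsilon$-independent constant, and the correct observation that a uniform $H^2$ bound is unavailable. You also correctly identify the critical obstruction, namely the quadratic term $\ba^\epsilon|\nabla\m^\epsilon|^2\m^\epsilon$. But the way you propose to handle it does not close. If you freeze time and write $-\bdiv(\ba^\epsilon\nabla\m^\epsilon)=\mathbf{g}$ with the quadratic term inside $\mathbf{g}$, then $\Vert\mathbf{g}\Vert_{L^2}$ contains $\Vert\nabla\m^\epsilon\Vert_{L^4}^2\le\Vert\nabla\m^\epsilon\Vert_{L^2}^{1/2}\Vert\nabla\m^\epsilon\Vert_{L^6}^{3/2}$, so the elliptic estimate yields an inequality of the form $X\le A+BX^{3/2}$ with $X=\Vert\nabla\m^\epsilon\Vert_{L^6}$; the supercritical exponent means this cannot be closed by absorption, and a ``Gr\"onwall argument'' at a fixed time is not available because no differential inequality in $t$ has been derived for $X$ or for any second-order quantity. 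A continuity-in-time argument starting from the uniformly bounded initial value of $X$ would require a quantitative bound on the growth rate of such a quantity, which is exactly what is missing. Moreover, the other source term you rely on, a uniform bound on $\partial_t\m^\epsilon$ in $L^\infty(0,T^*;L^2(\Omega))$, is itself problematic: the time-differentiated energy estimate produces error terms that need the very $L^6$ gradient bound being sought, and even granting it, $\partial_t\m^\epsilon$ only controls the component of $\mathcal{H}^\epsilon_e$ orthogonal to $\m^\epsilon$ (the paper makes this degeneracy point explicitly after \eqref{integrable of kinetic energy}); the parallel component is $-\ba^\epsilon|\nabla\m^\epsilon|^2$, i.e.\ precisely the quantity in question.

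The paper closes the argument differently: it applies $\nabla$ to the degenerate form \eqref{eqn:LLG system form 2}, tests against $\ba^\epsilon\nabla\mathcal{H}^\epsilon_e(\m^\epsilon,U^\epsilon)$, and uses the sign-definite term $\Vert\m^\epsilon\times\nabla\mathcal{H}^\epsilon_e\Vert_{L^2}^2$ together with the interpolation inequality of \cref{lemma:estimate of H in L3} (which trades $\Vert\mathcal{H}^\epsilon_e\Vert_{L^3}^3$ for $\delta\Vert\m^\epsilon\times\nabla\mathcal{H}^\epsilon_e\Vert_{L^2}^2$ plus $\Vert\mathcal{H}^\epsilon_e\Vert_{L^2}^6$) to arrive at the genuine differential inequality $\frac{\d}{\d t}\Vert\mathcal{A}_\epsilon\m^\epsilon\Vert_{L^2}^2\le C+C\Vert\mathcal{A}_\epsilon\m^\epsilon\Vert_{L^2}^6$. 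The Cauchy--Lipschitz--Picard theorem and the comparison principle then yield the short-time uniform bound on $\Vert\mathcal{A}_\epsilon\m^\epsilon\Vert_{L^2}$, and only afterwards is Proposition \ref{lemma: regularity for epslon} invoked to pass to $\Vert\nabla\m^\epsilon\Vert_{L^6}$. To rescue your route you would have to supply this (or an equivalent) evolution inequality for a second-order quantity; the fixed-time elliptic bootstrap alone cannot do it.
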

%The proof of Lemma \ref{thm: Consistency estimate of two-scale approximation}, Lemma \ref{theorem:stability} and Lemma \ref{regularety 3} can be found in Appendix \ref{first lemma}, Appendix \ref{second lemma}, Appendix \ref{third lemma}, respectively, which are parallel to the work in \cite{CLS:apa:2022}, except with more complex computation for the presence of lower-order terms in our full LLG model.
\begin{remark}
Comparing with the similar results in \cite{CLS:apa:2022}, the more complex lower-order terms are cosindered in our multiscale LLG model. 
The proof of \cref{thm: Consistency estimate of two-scale approximation}, \cref{theorem:stability}, and \cref{regularety 3} can be found in \cref{first lemma}, \cref{second lemma}, and \cref{third lemma}, respectively. 
%It is worth noting that these results are parallel to the work \cite{CLS:apa:2022}, except with more complex lower-order terms in our multiscale LLG model.    
\end{remark}

Utilizing \eqref{eqn:conclution in thm 1} and \cref{regularety 3}, one can derive that for some $0<T^*\le T$, it holds
\begin{equation}\label{eqn:conclution in thm 1 form 2}
	\begin{aligned}
		&\Vert \e^\epsilon(\cdot,t) \Vert_{L^\infty(0,T^*;L^2(\Omega))}
		\le C \epsilon^{5/6} \ln (\epsilon^{-1} + 1), \\\
		&\Vert \e^\epsilon(\cdot,t) \Vert_{L^\infty(0,T^*;H^1(\Omega))}
		\le C \epsilon^{1/2}\ln (\epsilon^{-1} + 1).
	\end{aligned}
\end{equation}
Here, the constant $C$ depends on $C_{\mathrm{coe}}$, $a_{\mathrm{min}}$ and $a_{\mathrm{max}}$, but is independent of $\epsilon$.
Moreover, revisit the definition of $\e^\epsilon (\bx)$ in \eqref{ansatz}, we have
\begin{equation}\label{express of error}
	\e^\epsilon (\bx) =
	\m^\epsilon(\bx) - \m_0(\bx) - \epsilon\m_1\left(\bx,\frac{\bx}{\epsilon}\right) - \epsilon^2\m_2\left(\bx,\frac{\bx}{\epsilon}\right).
\end{equation}
Based on the definition of $\m_2$ in \eqref{second-order corrector} and Assumption \eqref{assumption: coefficient}, there exists some constant $C$ independent of $\epsilon$, such that
\begin{equation}\label{m2 bounded}
	\epsilon^2 \Big\Vert \mathbf{m}_2\left(\bx,\frac{\bx}{\epsilon}\right) \Big\Vert_{H^1(\Omega)}
	\le 
	\epsilon \Vert \mathbf{m}_2(\bx,\by) \Vert_{W^{1,\infty}(\Omega\times Y)}
	\le C \epsilon.
\end{equation}
%	we can conclude that $\Vert \mathbf{m}_2 \Vert_{L^2(\Omega)}$ is of order $O(1)$, as stated in Lemma \ref{lemma: estimate of chi}, $\Vert \boldsymbol{\Phi}^\epsilon - \boldsymbol{x} \Vert_{L^\infty(\Omega)}$ is of order $O(\epsilon)$, and in Lemma \ref{lemma: estimate of theta and gamma}, we establish that $\Vert \boldsymbol{\omega}_\mathrm{M} \Vert_{L^\infty(\Omega)}$ and $\Vert \nabla \boldsymbol{\omega}_\mathrm{M} \Vert_{L^\infty(\Omega)}$ are of order $O(\epsilon)$ as well.
For the 3D case, combining \eqref{eqn:conclution in thm 1 form 2}, \eqref{express of error} with \eqref{m2 bounded}, the estimates \eqref{convergence estimate of periodic} in \cref{thm: convergence result of periodic case}  can be derived. 
%presented in \eqref{eqn:conclution in thm 1} can be \eqref{convergence estimate of periodic}.

For the 2D case, the stray field degenerates to a simple form \eqref{stray field 2D}, and the error $\boldsymbol{\Theta}^\varepsilon_\mathrm{sf}$ disappears. Therefore, by setting $\boldsymbol{\Theta}^\varepsilon_\mathrm{sf}=0$, one can prove that Lemma \ref{thm: Consistency estimate of two-scale approximation}, Lemma \ref{theorem:stability} and Lemma \ref{regularety 3} still hold. Finally, the 2D part of Theorem \ref{thm: convergence result of periodic case} can be similarly deduced. 
The proof is completed.

\section{Numerical experiments}\label{sec4}

This section provides several numerical examples to verify the convergence results presented in \cref{sec3}. Moreover, it demonstrates the proposed framework's robustness in simulating the multiscale LLG equation under different boundary conditions and effective fields.
%In this section, numerical examples are given to verify the proposed framework's efficiency and accuracy in simulating the multiscale LLG equation under different boundary conditions and effective fields.
To simplify the representation of approximate error 
in \cref{thm: convergence result of periodic case} and \cref{thm:neumann}, the following notations are introduced.
\begin{equation}\label{error}
	\begin{gathered}
		e_0=\left\|\boldsymbol{m}^{\varepsilon}(\bx,t)-\boldsymbol{m}_0(\bx,t)\right\|_{L^2(\Omega)},\quad \tilde{e}_0=e_0/\left\|\boldsymbol{m}^{\varepsilon}(\bx,t)\right\|_{L^2(\Omega)},\\
		e_1=\Big\|\boldsymbol{m}^{\varepsilon}(\bx,t)-\boldsymbol{m}_0(\bx,t)-\varepsilon\boldsymbol{\chi}\left(\frac{\bx}{\epsilon}\right)\nabla\mathbf{m}_0(\bx,t)\Big\|_{H^1(\Omega)},\quad \tilde{e}_1=e_1/\left\|\boldsymbol{m}^{\varepsilon}(\bx,t)\right\|_{H^1(\Omega)},\\
		e_2=\left\|\boldsymbol{m}^{\varepsilon}(\bx, t)-\boldsymbol{m}_0(\bx, t)-(\boldsymbol{\Phi}^\epsilon-\boldsymbol{x})\nabla\mathbf{m}_0(\bx, t)\right\|_{H^1(\Omega)},\quad \tilde{e}_2=e_2/\left\|\boldsymbol{m}^{\varepsilon}(\bx,t)\right\|_{H^1(\Omega)}.
	\end{gathered}
\end{equation}
%To simplify the notation, we introduce the following shorthand:
%\vspace{1mm}
%\subsection{Acceleration Through Semi-Implicit Schemes}
%In this part, we compare the semi-implicit scheme used for temporal discretization with an implicit scheme. It demonstrate the superior performance of our formulation in relaxing time step constraints and reducing the number of iteration steps.
%\par\noindent\large{\textbf{2D Examples}}
%\vspace{1mm}
\par For the 2D examples, the problems \eqref{multi model} are considered in the domain $\Omega=[0,1]^2$. The finite element solutions with spatial mesh size $h=1/180$ are employed as the reference.
%, and the two-scale approximate solutions are obtained with spatial mesh size $h=1/150$. 
The reference solutions and the homogenized solutions share the consistent time step size $\Delta t = 10^{-6}$ and damping constant $\alpha=1$. The initial data $\m_{\mathrm{init}}^{0}(\bx)$ of the homogenized system is defined as
%Let $\Omega=[0,1]^2$, and let $\mathbf{m}_{\mathrm{init}}^{0}(\boldsymbol{x}) : \Omega \to S^2$ 
%\begin{equation*}
%	\mathbf{m}_{\mathrm{init}}^{0}(\boldsymbol{x})= 
%	\begin{cases}
%		(0,0,-1) & \text { for }|\tilde{\bx}| \geq 1 / 2 \\
%		\left(2 \tilde{\bx} A, A^2-|\tilde{\bx}|^2\right) /\left(A^2+|\tilde{\bx}|^2\right) & \text { for }|\tilde{\bx}| \leq 1 / 2
%	\end{cases},
%\end{equation*}
\begin{equation*}
	\mathbf{m}_{\mathrm{init}}^{0}(\boldsymbol{x})= 
	\begin{cases}
		(0,0,-1) & \text { for }|\tilde{\bx}| \geq 1 / 2 \\
		\left(2 \tilde{x}_1 A, 2 \tilde{x}_2 A, A^2-|\tilde{\bx}|^2\right) /\left(A^2+|\tilde{\bx}|^2\right) & \text { for }|\tilde{\bx}| \leq 1 / 2
	\end{cases},
\end{equation*}
where $\tilde{\bx}=(\tilde{x}_1,\tilde{x}_2)=(x_1-1/2,x_2-1/2),$ and $A:=(1-2|\tilde{\bx}|)^4$.

\subsection{2D Periodic problem}\label{2D-P}
This example presents the simulation results for the 2D Periodic problem involving the exchange field. The exchange field is the second-order term of the effective field and plays a dominant role in the evolution of the multiscale LLG system. In this case, the multiscale LLG equation is given by
\begin{equation*}
	\left\{\begin{aligned}
		&\partial_t \mathbf{m}^{\varepsilon}-\alpha \mathbf{m}^{\varepsilon} \times \partial_t \mathbf{m}^{\varepsilon}
		=-\left(1+\alpha^2\right) \m^\epsilon \times \bdiv\left(\ba^\varepsilon \nabla \mathbf{m}^\varepsilon\right)
		\quad \text {in } \Omega, \\
		&\mathbf{m}^{\varepsilon}(\boldsymbol{x}, 0) =\mathbf{m}_{\mathrm{init}}^{\varepsilon}(\boldsymbol{x}), \quad\left|\mathbf{m}_{\mathrm{init}}^{\varepsilon}(\boldsymbol{x})\right|=1 \quad \text {in } \Omega,\\
	&\mathbf{m}^{\varepsilon}(\boldsymbol{x}, t),\, \mathbf{m}_{\mathrm{init}}^\varepsilon(\boldsymbol{x})  \quad \text {are periodic on } \partial \Omega .
	\end{aligned}\right.
\end{equation*}
The corresponding homogenized LLG equation is
\begin{equation*}
	\left\{\begin{aligned}
		&\partial_t \mathbf{m}_0-\alpha \mathbf{m}_0 \times \partial_t \mathbf{m}_0=-\left(1+\alpha^2\right) \mathbf{m}_0 \times \bdiv\left(\ba^0 \nabla \mathbf{m}_0\right) \quad \text {in } \Omega, \\
		&\mathbf{m}_0(\boldsymbol{x}, 0) =\mathbf{m}_{\mathrm{init}}^0(\boldsymbol{x}), \quad\left|\mathbf{m}_{\mathrm{init}}^0(\boldsymbol{x})\right|=1 \quad \text {in } \Omega,\\
	&\mathbf{m}_0(\boldsymbol{x}, t),\, \mathbf{m}_{\mathrm{init}}^0(\boldsymbol{x})  \quad \text {are periodic on } \partial \Omega .
	\end{aligned}\right.
\end{equation*}
The initial data of the multiscale LLG equation is obtained by the expansion method
\begin{equation*}
	\m_{\mathrm{init}}^\epsilon(\bx)
	= \m_{\mathrm{init}}^0(\bx) + \epsilon\boldsymbol{\chi}\left(\frac{\bx}{\epsilon}\right)\nabla\mathbf{m}_{\mathrm{init}}^0(\bx).
\end{equation*}
For the 2D Periodic problem only with the exchange field, the convergence results are given as follows
%\begin{equation*}
%	e_0\leq C %\varepsilon^{1},\quad e_1 \leq C %\varepsilon^{1}.
%\end{equation*}
\begin{equation*}
	\begin{gathered}
		\left\|\boldsymbol{m}^{\varepsilon}(\bx,t)-\boldsymbol{m}_0(\bx,t)\right\|_{L^2(\Omega)} \leq C \varepsilon^{1},\\
		\left\|\boldsymbol{m}^{\varepsilon}(\bx,t)-\boldsymbol{m}_0(\bx,t)-\varepsilon\boldsymbol{\chi}\left(\frac{\bx}{\epsilon}\right)\nabla\mathbf{m}_0(\bx,t)\right\|_{H^1(\Omega)} \leq C \varepsilon^{1}.
	\end{gathered}
\end{equation*}

Under assumption \eqref{periodic perturbation assumption}, the exchange coefficient $\ba^\varepsilon$ can be defined by $\ba(\by)$
\begin{equation*}
    \ba(\by)=\Big(1.1+0.25\cos{\big(2\pi(y_1-0.5)\big)}\Big)\Big(1.1+0.25\cos{\big(2\pi(y_2-0.5)\big)}\Big)\times I_2,
\end{equation*}
where $I_2$ is the $2\times 2$ identity matrix. 

\begin{figure}[h!]
    \centering
	\subfigure[]{\includegraphics[width=0.44\textwidth]{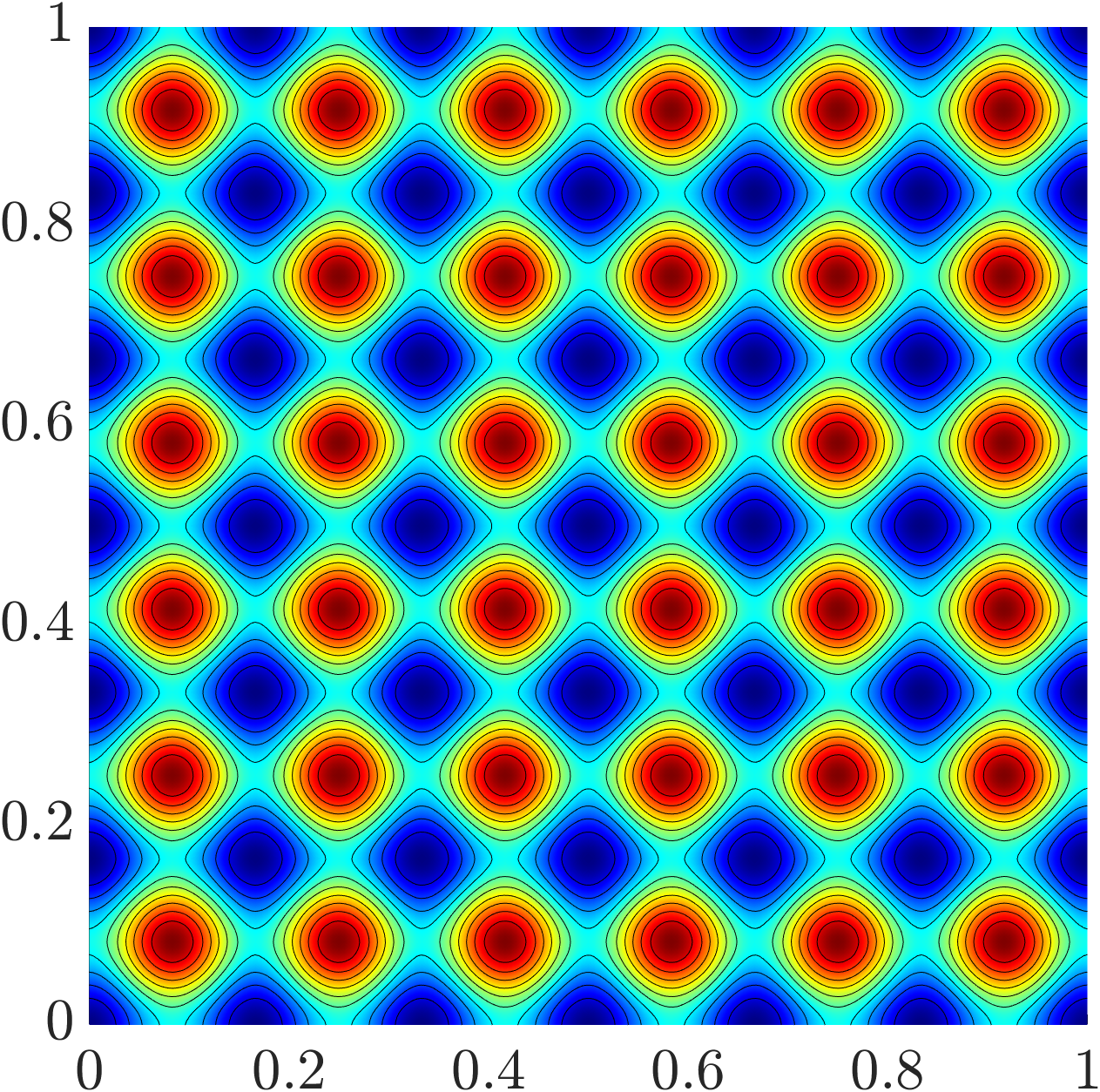}}
    \subfigure[]{\includegraphics[width=0.5\textwidth]{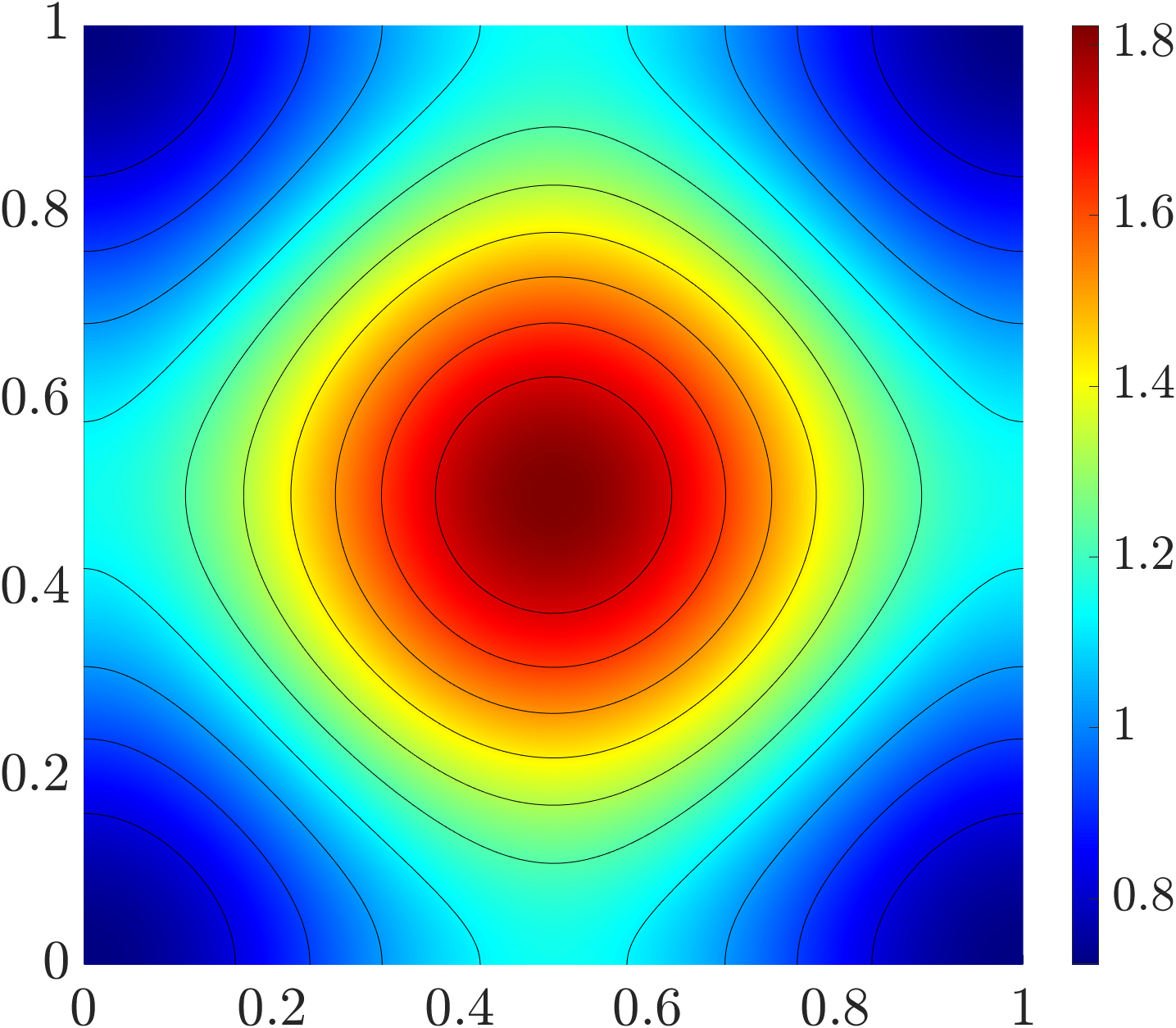}}    
	\caption{Multiscale exchange coefficient $\ba^\varepsilon(\bx)$ defined in (a) the whole computational domain $\Omega$, and (b) the reference unit cell $Y$.}
    %\vspace{-4mm}
\end{figure}
%and corresponding periodic function $\ba(\by)$.} The isotropic multiscale coefficient significantly reduces the impact of off-diagonal terms in the homogenized coefficient $\ba^0$ on subsequent computations.

\cref{fig:cell function} shows two auxiliary functions $\chi_1$ and $\chi_2$ of the first-order cell problem \eqref{first-order cell problem}, which are utilized to obtain the corresponding homogenized coefficients $\ba^0$ by \eqref{homogenized coefficient}.

\begin{equation*}
    \ba^0=\left(\begin{aligned}
        &1.178 & 1.073\times 10^{-9}\\
        &1.073\times 10^{-9} & 1.178\\
    \end{aligned}
    \right).
\end{equation*}

%Employing the following notation:,
%To facilitate the presentation of the results, we introduce the following notation:
%\begin{equation}\label{e0e1}
%	\begin{gathered}
%		e_0=\left\|\boldsymbol{m}^{\varepsilon}(\bx,t)-\boldsymbol{m}_0(\bx,t)\right\|_{L^2(\Omega)},\quad \tilde{e}_0=e_0/\left\|\boldsymbol{m}^{\varepsilon}(\bx,t)\right\|_{L^2(\Omega)},\\
%		e_1=\|\boldsymbol{m}^{\varepsilon}(\bx,t)-\boldsymbol{m}_0(\bx,t)-\varepsilon\boldsymbol{\chi}(\frac{\bx}{\epsilon})\nabla\mathbf{m}_0(\bx,t)\|_{H^1(\Omega)},\quad \tilde{e}_1=e_1/\left\|\boldsymbol{m}^{\varepsilon}(\bx,t)\right\|_{H^1(\Omega)},\\		
%	\end{gathered}
%\end{equation}

\begin{figure}[t!]
    \centering
    \subfigure[$\chi_1$]{\includegraphics[width=0.445\textwidth]{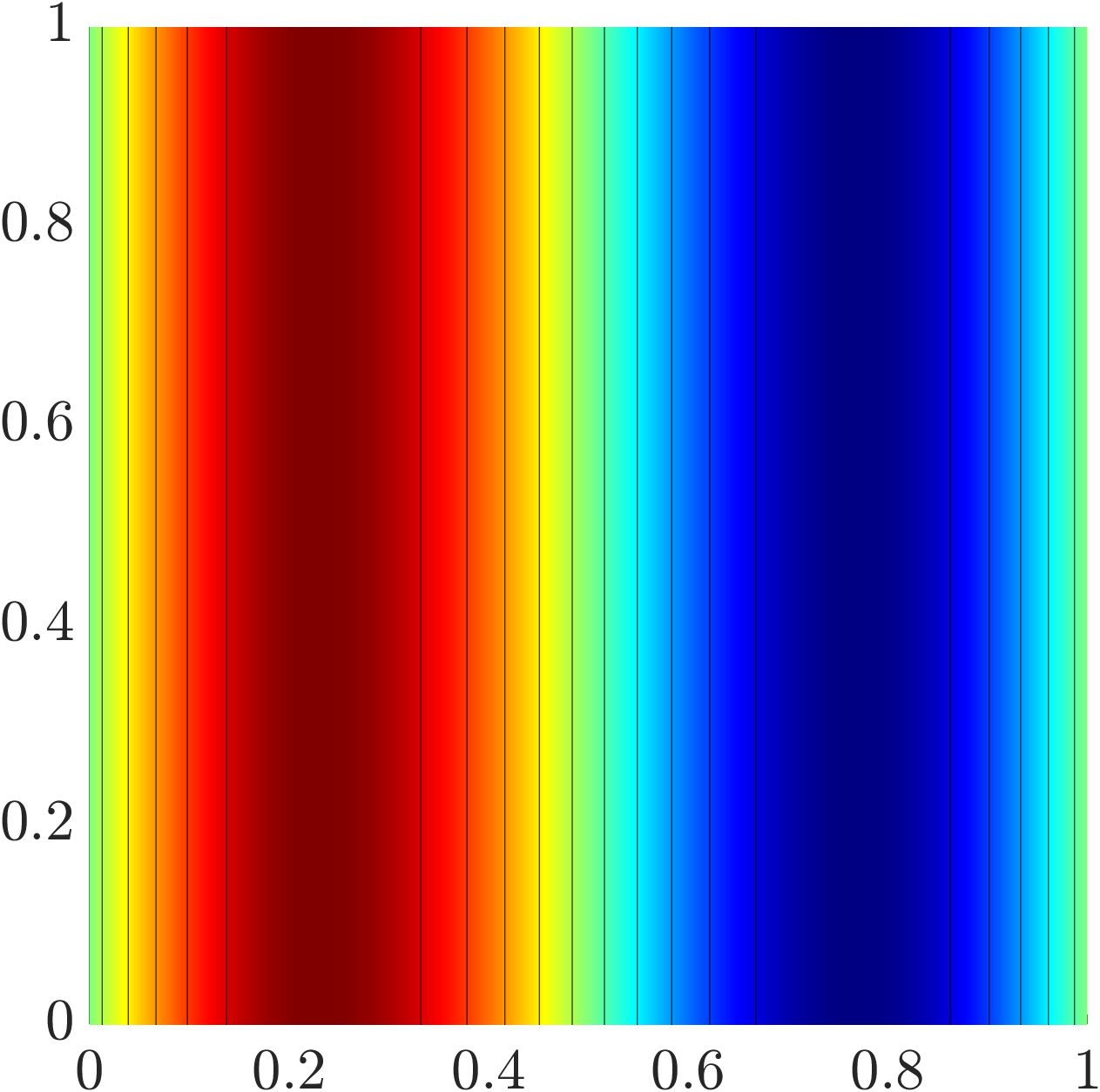}}
    \subfigure[$\chi_2$]{\includegraphics[width=0.522\textwidth]{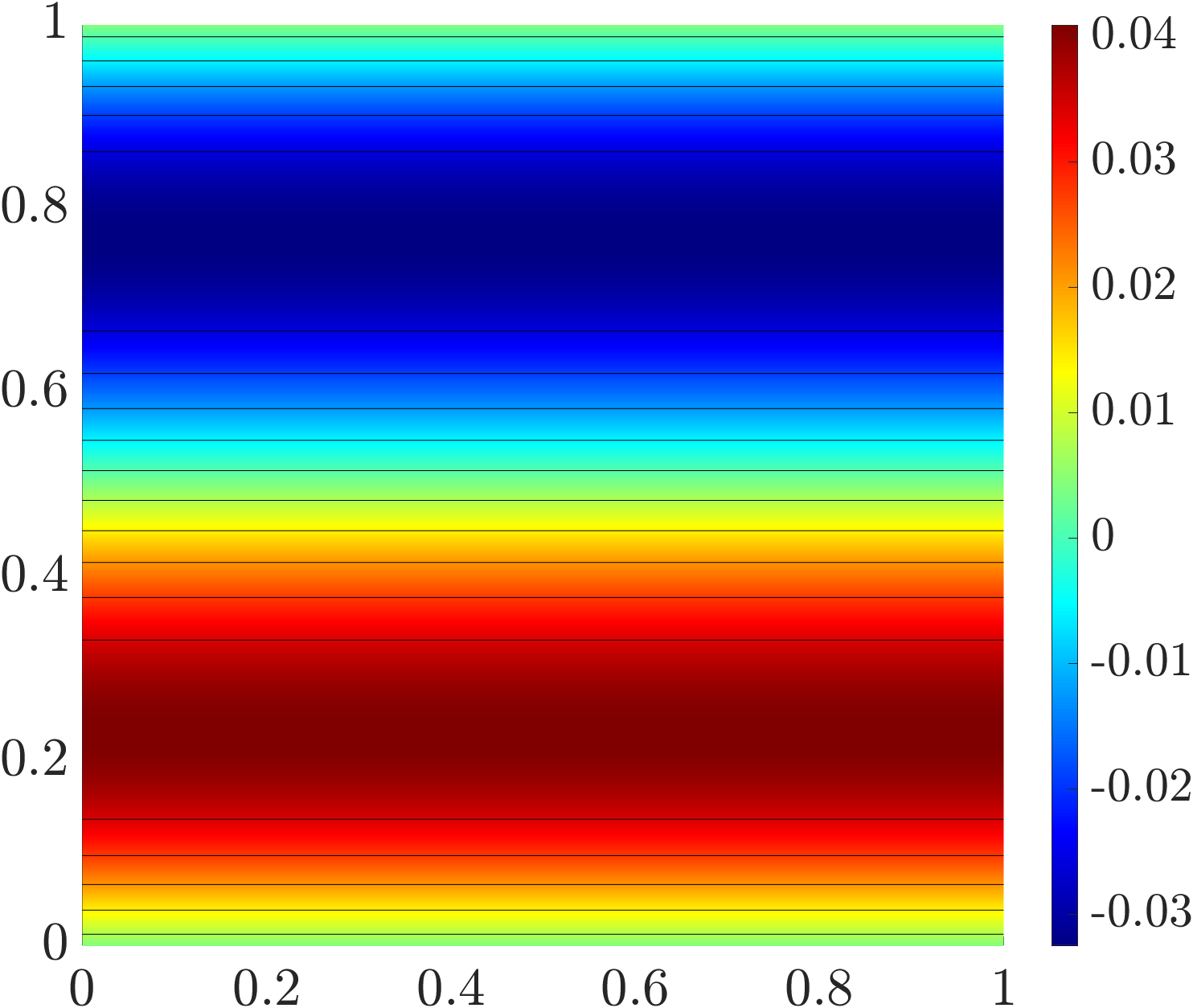}}
    \caption{Demonstration of the auxiliary functions $\chi_1,\chi_2$.}
	\label{fig:cell function}
    \vspace{-2mm}
\end{figure}

\cref{tab: periodic} presents the error results at different time $t=j\Delta t$ with various numbers of periods $n$, where the corresponding error convergence orders are depicted in \cref{fig: periodic}. 
%Leveraging the concept of least squares, \cref{fig: periodic} displays the error convergence orders corresponding to \cref{tab: periodic} as slopes. 
From the figure, the error convergence orders obtained from the proposed framework is consistent with the theoretical results. As time increases, the computational errors inevitably accumulate, eventually leading to a slight decrease in the convergence order at $j=10^3$.

\begin{table}[h!]
    \centering
    \resizebox{\linewidth}{!}
    {\begin{tabular}{|c|c|c|c|c|c|c|}
        \hline
        \multirow{2}*{$n$} & \multicolumn{2}{c|}{j=10}  &\multicolumn{2}{c|}{j=$10^2$} &\multicolumn{2}{c|}{j=$10^3$}\\ \cline{2-7}  
                        & $e_0$  &  $\tilde{e}_0$  
                        &  $e_0$  &  $\tilde{e}_0$ &  $e_0$  &  $\tilde{e}_0$ \\   
        \hline
        $2$   & $1.18\times 10^{-1}$ & $1.18\times 10^{-1}$ & $1.17\times 10^{-1}$ & $1.17\times 10^{-1}$  & $1.09\times 10^{-1}$ & $1.09\times 10^{-1}$\\
        \hline
        $3$   & $6.34\times 10^{-2}$ & $6.34\times 10^{-2}$ & $6.26\times 10^{-2}$ & $6.26\times 10^{-2}$ & $5.81\times 10^{-2}$ & $5.81\times 10^{-2}$\\
        \hline
        $4$   & $4.46\times 10^{-2}$ & $4.46\times 10^{-2}$ & $4.45\times 10^{-2}$ & $4.45\times 10^{-2}$ & $4.52\times 10^{-2}$ & $4.52\times 10^{-2}$\\
        \hline
        $5$   & $3.77\times 10^{-2}$ & $3.78\times 10^{-2}$ & $3.76\times 10^{-2}$ & $3.76\times 10^{-2}$ & $3.91\times 10^{-2}$ & $3.91\times 10^{-2}$\\
        \hline
        $6$   & $3.25\times 10^{-2}$ & $3.25\times 10^{-2}$ & $3.21\times 10^{-2}$ & $3.21\times 10^{-2}$ & $3.27\times 10^{-2}$ & $3.27\times 10^{-2}$\\
        \hline
		\quad &  $e_1$  &  $\tilde{e}_1$ &  $e_1$  &  $\tilde{e}_1$ &  $e_1$  &  $\tilde{e}_1$ \\   
		\hline
        $2$   & $2.14$ & $3.33\times 10^{-1}$ & $2.06$ & $3.23\times 10^{-1}$  & $1.54$ & $2.59\times 10^{-1}$\\
        \hline
        $3$   & $1.13$ & $1.55\times 10^{-1}$ & $1.10$ & $1.52\times 10^{-1}$ & $9.38\times 10^{-1}$ & $1.42\times 10^{-1}$\\
        \hline
        $4$   & $7.94\times 10^{-1}$ & $1.18\times 10^{-1}$ & $7.76\times 10^{-1}$ & $1.17\times 10^{-1}$ & $7.05\times 10^{-1}$ & $1.13\times 10^{-1}$\\
        \hline
        $5$   & $6.72\times 10^{-1}$ & $9.94\times 10^{-2}$ & $6.57\times 10^{-1}$ & $9.80\times 10^{-2}$ & $6.34\times 10^{-1}$ & $1.01\times 10^{-1}$\\
        \hline
        $6$   & $5.81\times 10^{-1}$ & $8.35\times 10^{-2}$ & $5.70\times 10^{-1}$ & $8.28\times 10^{-2}$ & $5.64\times 10^{-1}$ & $8.87\times 10^{-2}$\\
        \hline
	\end{tabular}}
    \caption{Error of the 2D Periodic problem under various numbers of periods $n=1/\varepsilon$ at time $t=j\Delta t$.}
    \label{tab: periodic}
\end{table}

\begin{figure}[h!]
    \centering
    \includegraphics[width=1.0\textwidth]{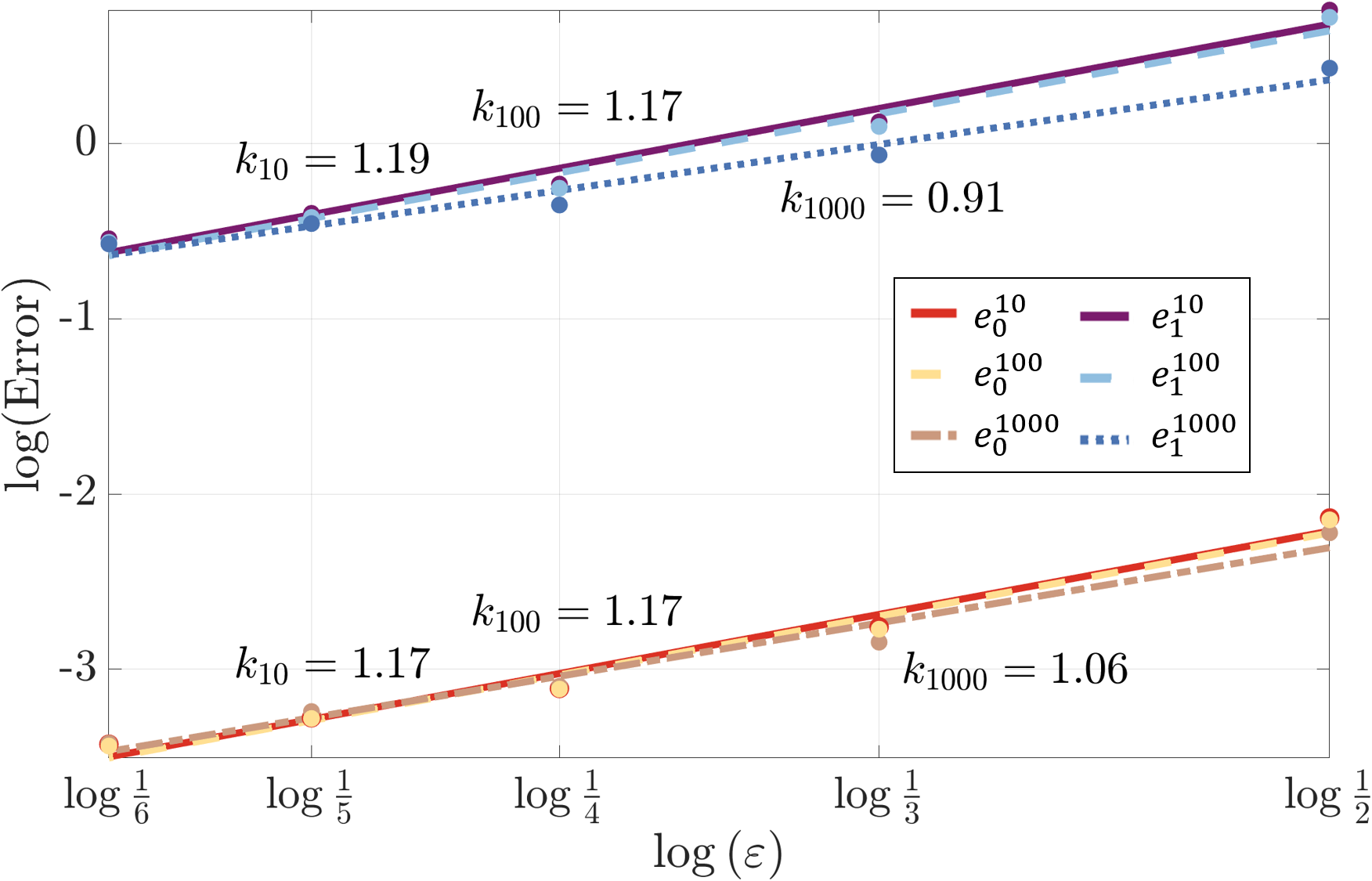}
    \caption{Variation of error $e_0^j$ and $e_1^j$ relative to the cell size $\varepsilon$ across different time steps $j$ in the 2D Periodic problem.} %The plots showcase the computed convergence orders $k$ for time steps $j=10,10^2,10^3$, which are all closely aligned with the anticipated theoretical values. As the number of time steps increases, a discernible trend emerges: the cumulative error leads to a modest reduction in the convergence order, particularly noteworthy at $j=10^3$.}
    \vspace{-6mm}
    \label{fig: periodic}
\end{figure}

\subsection{2D Neumann problem}\label{2D-N}

For the Neumann problem, the effective field $\bh_{\mathrm{eff}}^\varepsilon$ and the multiscale exchange coefficient $\ba^\varepsilon$ are considered with the same situation of \cref{2D-P}. In this case, the multiscale LLG equation is expressed as
\begin{equation*}
	\left\{\begin{aligned}
		&\partial_t \mathbf{m}^{\varepsilon}-\alpha \mathbf{m}^{\varepsilon} \times \partial_t \mathbf{m}^{\varepsilon} =-\left(1+\alpha^2\right) \mathbf{m}^{\varepsilon} \times \bdiv\left(\ba^\varepsilon \nabla \mathbf{m}^\varepsilon\right) \quad \text {in } \Omega,\\
		&\mathbf{m}^{\varepsilon}(\boldsymbol{x}, 0) =\mathbf{m}_{\mathrm{init}}^{\varepsilon}(\boldsymbol{x}), \quad\left|\mathbf{m}_{\mathrm{init}}^{\varepsilon}(\boldsymbol{x})\right|=1 \quad \text {in } \Omega,\\
		&\boldsymbol{\nu} \cdot \ba^{\varepsilon} \nabla \mathbf{m}^{\varepsilon}(\boldsymbol{x}, t) =0, \quad \text {on } \partial \Omega \times[0, T], \\
		&\boldsymbol{\nu} \cdot \ba^{\varepsilon} \nabla \mathbf{m}_{\mathrm{init}}^{\varepsilon}(\boldsymbol{x}) =0, \quad \text {on } \partial \Omega. \\
	\end{aligned}\right.
\end{equation*}
The corresponding homogenized LLG equation is
\vspace{-2mm}
\begin{equation*}
    \left\{\begin{aligned}
        &\partial_t \mathbf{m}_0-\alpha \mathbf{m}_0 \times \partial_t \mathbf{m}_0=-\left(1+\alpha^2\right) \mathbf{m}_0 \times \bdiv\left(\ba^0 \nabla \mathbf{m}_0\right) \quad \text { in } \Omega, \\
        &\mathbf{m}_{0}(\boldsymbol{x}, 0)=\mathbf{m}_{\mathrm{init}}^{0}(\boldsymbol{x}), \quad\left|\mathbf{m}_{\mathrm{init}}^{0}(x)\right|=1 \quad \text { in } \Omega,\\
        &\boldsymbol{\nu} \cdot \ba^{0} \nabla \mathbf{m}_0(\boldsymbol{x}, t) = 0, \quad \text { on } \partial \Omega \times[0, T], \\
        &\boldsymbol{\nu} \cdot \ba^{0} \nabla \mathbf{m}_{\mathrm{init}}^{0}(\boldsymbol{x})=0, \quad \text { on } \partial \Omega. \\
    \end{aligned}\right.
\end{equation*}
%Comparing with the Periodic problem, the corresponding error estimate requires the Neumann correction function $\boldsymbol{\Phi}^\epsilon$ defined in \eqref{neumann correct}. 
Here, we employ the expansion method to obtain the initial data of the multiscale LLG equation
\begin{equation*}
	\m_{\mathrm{init}}^\epsilon(\bx)
	= \m_{\mathrm{init}}^0(\bx) + (\boldsymbol{\Phi}^\epsilon-\boldsymbol{x})\nabla\mathbf{m}_{\mathrm{init}}^0(\bx),
\end{equation*}
where $\boldsymbol{\Phi}^\epsilon$ is the Neumann corrector defined in \eqref{neumann correct}. For the 2D Neumann problem only with the exchange field, the convergence results are given as follows
\begin{equation*}
	\begin{gathered}
		\left\|\boldsymbol{m}^{\varepsilon}(\bx,t)-\boldsymbol{m}_0(\bx,t)\right\|_{L^2(\Omega)} \leq C \varepsilon^{1},\\
         \left\|\boldsymbol{m}^{\varepsilon}(\bx, t)-\boldsymbol{m}_0(\bx, t)-(\boldsymbol{\Phi}^\epsilon-\boldsymbol{x})\nabla\mathbf{m}_0(\bx, t)\right\|_{H^1(\Omega)} \leq C \varepsilon^{1}.
	\end{gathered}
\end{equation*}

%The configuration of the multiscale exchange coefficient $\ba^\varepsilon$ is consistent with \cref{2D-P}. 

%Leveraging the notation in \eqref{e0e1} along with additional notation: 
%\begin{equation}\label{e2}
%    e_2=\left\|\boldsymbol{m}^{\varepsilon}(\bx, t)-\boldsymbol{m}_0(\bx, t)-(\boldsymbol{\Phi}^\epsilon-\boldsymbol{x})\nabla\mathbf{m}_0(\bx, t)\right\|_{H^1(\Omega)},\quad \tilde{e}_2=e_2/\left\|\boldsymbol{m}^{\varepsilon}(\bx,t)\right\|_{H^1(\Omega)},
%\end{equation}
The error results of the Neumann problem are shown in \cref{tab: Neumann}. %Similarly, the error convergence orders are illustrated by plotting the slope of the least squares line. 
As depicted in \cref{fig: Neumann}, the obtained convergence orders align well with theoretical results. The similar trends of convergence orders are observed for the Periodic problem and the Neumann problem. 

\begin{table}[h!]
    \centering
    \resizebox{\linewidth}{!}
    {\begin{tabular}{|c|c|c|c|c|c|c|}
        \hline
        \multirow{2}*{$n$} & \multicolumn{2}{c|}{j=10}  &\multicolumn{2}{c|}{j=$10^2$} &\multicolumn{2}{c|}{j=$10^3$}\\ \cline{2-7}  
                        &  $e_0$  &  $\tilde{e}_0$  
                        &  $e_0$  &  $\tilde{e}_0$ &  $e_0$  &  $\tilde{e}_0$ \\   
        \hline
        $2$   & $1.13\times 10^{-1}$ & $1.13\times 10^{-1}$ & $1.13\times 10^{-1}$ & $1.13\times 10^{-1}$  & $1.05\times 10^{-1}$ & $1.05\times 10^{-1}$\\
        \hline
        $3$   & $5.80\times 10^{-2}$ & $5.80\times 10^{-2}$ & $5.72\times 10^{-2}$ & $5.72\times 10^{-2}$ & $5.30\times 10^{-2}$ & $5.30\times 10^{-2}$\\
        \hline
        $4$   & $4.01\times 10^{-2}$ & $4.01\times 10^{-2}$ & $4.00\times 10^{-2}$ & $4.00\times 10^{-2}$ & $4.14\times 10^{-2}$ & $4.14\times 10^{-2}$\\
        \hline
        $5$   & $3.42\times 10^{-2}$ & $3.42\times 10^{-2}$ & $3.41\times 10^{-2}$ & $3.41\times 10^{-2}$ & $3.63\times 10^{-2}$ & $3.63\times 10^{-2}$\\
        \hline
        $6$   & $2.96\times 10^{-2}$ & $2.96\times 10^{-2}$ & $2.93\times 10^{-2}$ & $2.93\times 10^{-2}$ & $3.03\times 10^{-2}$ & $3.03\times 10^{-2}$\\
        \hline
		\quad &  $e_2$  &  $\tilde{e}_2$ &  $e_2$  &  $\tilde{e}_2$ &  $e_2$  &  $\tilde{e}_2$ \\   
		\hline
        $2$   & $2.02$ & $3.14\times 10^{-1}$ & $1.93$ & $3.04\times 10^{-1}$  & $1.44$ & $2.42\times 10^{-1}$\\
        \hline
        $3$   & $1.03$ & $1.41\times 10^{-1}$ & $1.01$ & $1.39\times 10^{-1}$ & $8.71\times 10^{-1}$ & $1.32\times 10^{-1}$\\
        \hline
        $4$   & $7.18\times 10^{-1}$ & $1.07\times 10^{-1}$ & $7.02\times 10^{-1}$ & $1.06\times 10^{-1}$ & $6.57\times 10^{-1}$ & $1.06\times 10^{-1}$\\
        \hline
        $5$   & $6.07\times 10^{-1}$ & $8.97\times 10^{-2}$ & $5.94\times 10^{-1}$ & $8.87\times 10^{-2}$ & $5.97\times 10^{-1}$ & $9.56\times 10^{-2}$\\
        \hline
        $6$   & $5.27\times 10^{-1}$ & $7.57\times 10^{-2}$ & $5.18\times 10^{-1}$ & $7.52\times 10^{-2}$ & $5.35\times 10^{-1}$ & $8.41\times 10^{-2}$\\
        \hline
	\end{tabular}}
    \caption{Error of the 2D Neumann problem under various numbers of periods $n=1/\varepsilon$ at time $t=j\Delta t$.}
    \vspace{-8mm}
    \label{tab: Neumann}
\end{table}

\begin{figure}[h!]
    \centering
    \includegraphics[width=1.0\textwidth]{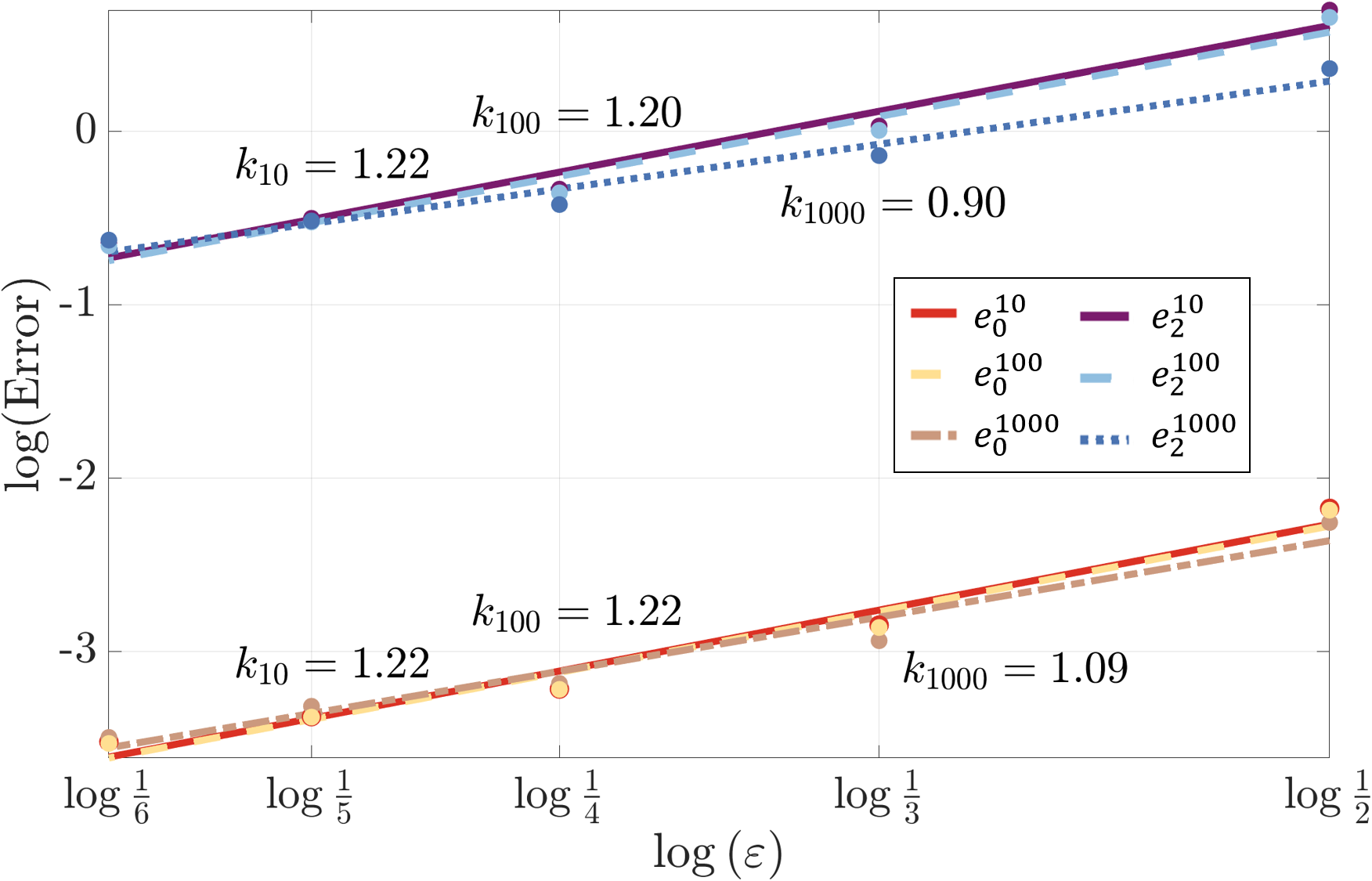}
    \caption{Variation of error $e_0^j$ and $e_2^j$ relative to the cell size $\varepsilon$ across different time steps $j$ in the 2D Neumann problem.}	\label{fig: Neumann}
    \vspace{-6mm}
\end{figure}

\subsection{2D Periodic problem with degenerated stray field}
The dominant exchange field is considered in previous two numerical experiments. However, the stray field 
%in the first-order field term 
also plays an important role in practical engineering, and  numerical simulation of its corresponding potential function is challenging.

%In the preceding two numerical experiments, our emphasis was on highlighting the predominant influence of the exchange field within the effective field. However, in practical research, the stray fields originating from all three first-order field terms also play a crucial role, and simulating their effects poses its own challenges.
%specifically focusing on a thin film material for simplicity. When simulating the multi model equation on the thin film materials, the stray field degenerates into a linear term. This degeneration significantly reduces the complexity of the simulation. 

In this numerical experiment, we consider the Periodic problem involving the exchange field and the stray field. For simplicity, the problem is restricted to the thin film materials. In this case, the stray field degenerates into a linear field, which can significantly decrease the complexity of the simulation.
%Disregarding the multiscale coefficients, the effective field under these conditions can be expressed as follows:
%\begin{equation*}
%    h_{\mathrm{eff}}=\Delta \mathbf{m}+\left(\mathbf{m}\cdot \mathbf{e}_3 \right)\mathbf{e}_3
%\end{equation*}
The multiscale LLG equation is given by
\begin{equation*}
	\left\{\begin{aligned}
		&\partial_t \mathbf{m}^{\varepsilon} - \alpha \mathbf{m}^{\varepsilon} \times \partial_t \mathbf{m}^{\varepsilon} = -\left(1+\alpha^2\right) \mathbf{m}^{\varepsilon} \times \left[\bdiv\left(\ba^{\varepsilon} \nabla \mathbf{m}^{\varepsilon}\right) + \mu^{\varepsilon}\left(\mathbf{m}^{\varepsilon}\cdot \mathbf{e}_3\right)\mathbf{e}_3\right]
		\quad \text {in } \Omega, \\
		&\mathbf{m}^{\varepsilon}(\boldsymbol{x}, 0) =\mathbf{m}_{\mathrm{init}}^{\varepsilon}(\boldsymbol{x}), \quad\left|\mathbf{m}_{\mathrm{init}}^{\varepsilon}(\boldsymbol{x})\right|=1 \quad \text {in } \Omega,\\
	&\mathbf{m}^{\varepsilon}(\boldsymbol{x}, t),\, \mathbf{m}_{\mathrm{init}}^\varepsilon(\boldsymbol{x})  \quad \text {are periodic on } \partial \Omega .
	\end{aligned}\right.
\end{equation*}
The corresponding homogenized LLG equation can be expressed as
\begin{equation*}
	\left\{\begin{aligned}
		&\partial_t \mathbf{m}_0 - \alpha \mathbf{m}_0 \times \partial_t \mathbf{m}_0 = -\left(1+\alpha^2\right) \mathbf{m}_0 \times \left[\bdiv\left(\ba^0 \nabla \mathbf{m}_0\right) + \mu^0\left(\mathbf{m}_0\cdot \mathbf{e}_3\right)\mathbf{e}_3\right] \quad \text {in } \Omega, \\
		&\mathbf{m}_0(\boldsymbol{x}, 0) =\mathbf{m}_{\mathrm{init}}^0(\boldsymbol{x}), \quad\left|\mathbf{m}_{\mathrm{init}}^0(\boldsymbol{x})\right|=1 \quad \text {in } \Omega,\\
	&\mathbf{m}_0(\boldsymbol{x}, t),\, \mathbf{m}_{\mathrm{init}}^0(\boldsymbol{x})  \quad \text {are periodic on } \partial \Omega .
	\end{aligned}\right.
\end{equation*}
Here, we also obtain the initial data of the multiscale LLG equation by the expansion method \eqref{expansion method}. For the Periodic problem involving the exchange field and the degenerated stray field, the theoretical results remain consistent with the case considering only the exchange field. This is because only a linear term is introduced into the effective field. The corresponding convergence results are given as follows
\begin{equation*}
	\begin{gathered}
		\left\|\boldsymbol{m}^{\varepsilon}(\bx,t)-\boldsymbol{m}_0(\bx,t)\right\|_{L^2(\Omega)} \leq C \varepsilon^{1},\\
		\left\|\boldsymbol{m}^{\varepsilon}(\bx,t)-\boldsymbol{m}_0(\bx,t)-\varepsilon\boldsymbol{\chi}\left(\frac{\bx}{\epsilon}\right)\nabla\mathbf{m}_0(\bx,t)\right\|_{H^1(\Omega)} \leq C \varepsilon^{1}.
	\end{gathered}
\end{equation*}

%\vspace{-2mm}
In this case, the positive multiscale coefficient $\mu^\varepsilon$ is defined by 
%\vspace{-1mm}
\begin{equation*}
	\mu^\varepsilon(\by) = \big(1.1 + 0.25\sin{(2\pi y_1)}\big)\big(1.1 + 0.25\sin{(2\pi y_2)}\big). 
\end{equation*}
Moreover, the exchange coefficient $\ba^\varepsilon$ is consistent with \cref{2D-P} and \cref{2D-N}. Then, \cref{tab: periodicstray} and \cref{fig: periodicstray} present the error results. Considering the effect of stray field, we still obtain the computational results, which closely align with the theoretical results.

\begin{table}[h!]
    \centering
    \resizebox{\linewidth}{!}
    {\begin{tabular}{|c|c|c|c|c|c|c|}
        \hline
        \multirow{2}*{$n$} & \multicolumn{2}{c|}{j=10}  &\multicolumn{2}{c|}{j=$10^2$} &\multicolumn{2}{c|}{j=$10^3$}\\ \cline{2-7}  
                        &  $e_0$  &  $\tilde{e}_0$  
                        &  $e_0$  &  $\tilde{e}_0$ &  $e_0$  &  $\tilde{e}_0$ \\   
        \hline
        $2$   & $1.18\times 10^{-1}$ & $1.18\times 10^{-1}$ & $1.17\times 10^{-1}$ & $1.17\times 10^{-1}$  & $1.09\times 10^{-1}$ & $1.09\times 10^{-1}$\\
        \hline
        $3$   & $6.34\times 10^{-2}$ & $6.34\times 10^{-2}$ & $6.26\times 10^{-2}$ & $6.26\times 10^{-2}$ & $5.81\times 10^{-2}$ & $5.81\times 10^{-2}$\\
        \hline
        $4$   & $4.46\times 10^{-2}$ & $4.46\times 10^{-2}$ & $4.45\times 10^{-2}$ & $4.45\times 10^{-2}$ & $4.52\times 10^{-2}$ & $4.52\times 10^{-2}$\\
        \hline
        $5$   & $3.77\times 10^{-2}$ & $3.78\times 10^{-2}$ & $3.76\times 10^{-2}$ & $3.76\times 10^{-2}$ & $3.91\times 10^{-2}$ & $3.91\times 10^{-2}$\\
        \hline
        $6$   & $3.25\times 10^{-2}$ & $3.25\times 10^{-2}$ & $3.21\times 10^{-2}$ & $3.21\times 10^{-2}$ & $3.26\times 10^{-2}$ & $3.27\times 10^{-2}$\\
        \hline
		\quad &  $e_1$  &  $\tilde{e}_1$ &  $e_1$  &  $\tilde{e}_1$ &  $e_1$  &  $\tilde{e}_1$ \\   
		\hline
        $2$   & $2.14$ & $3.33\times 10^{-1}$ & $2.06$ & $3.23\times 10^{-1}$  & $1.54$ & $2.58\times 10^{-1}$\\
        \hline
        $3$   & $1.13$ & $1.55\times 10^{-1}$ & $1.10$ & $1.52\times 10^{-1}$ & $9.37\times 10^{-1}$ & $1.42\times 10^{-1}$\\
        \hline
        $4$   & $7.94\times 10^{-1}$ & $1.18\times 10^{-1}$ & $7.76\times 10^{-1}$ & $1.17\times 10^{-1}$ & $7.05\times 10^{-1}$ & $1.13\times 10^{-1}$\\
        \hline
        $5$   & $6.72\times 10^{-1}$ & $9.94\times 10^{-2}$ & $6.57\times 10^{-1}$ & $9.80\times 10^{-2}$ & $6.34\times 10^{-1}$ & $1.01\times 10^{-1}$\\
        \hline
        $6$   & $5.81\times 10^{-1}$ & $8.35\times 10^{-2}$ & $5.70\times 10^{-1}$ & $8.28\times 10^{-2}$ & $5.64\times 10^{-1}$ & $8.87\times 10^{-2}$\\
        \hline
	\end{tabular}}
    \caption{Error of the 2D Periodic problem with exchange field and degenerated stray field under various numbers of periods $n=1/\varepsilon$ at time $t=j\Delta t$.}
    \label{tab: periodicstray}
    \vspace{-4mm}
\end{table}

\begin{figure}[h!]
    \centering
    \includegraphics[width=1.0\textwidth]{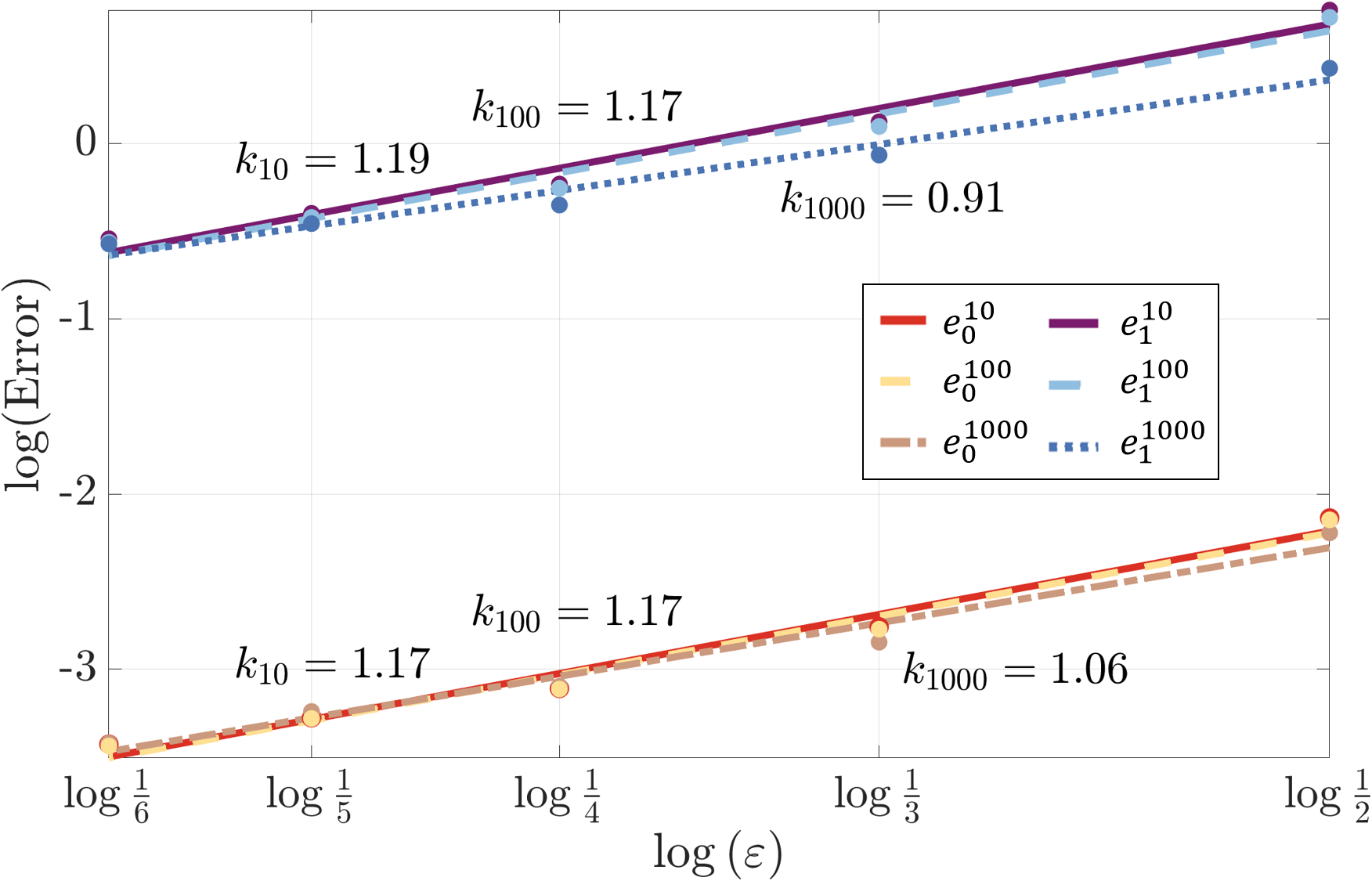}
    \caption{Variation of error $e_0^j$ and $e_1^j$ relative to the cell size $\varepsilon$ across different time steps $j$ in the 2D Periodic problem with exchange field and degenerated stray field.}
	\label{fig: periodicstray}
    \vspace{-6mm}
\end{figure}

\subsection{3D Periodic problem}
For the 3D case, the Periodic problem with the exchange field is cosidered in domain $\Omega=[0,1]^3$. In this case, the multiscale and homogenized LLG equation are consistent with \cref{2D-P}.
The convergence results can be expressed as
\vspace{-2mm}
\begin{equation*}
	\begin{gathered}
		\left\|\boldsymbol{m}^{\varepsilon}(\bx,t)-\boldsymbol{m}_0(\bx,t)\right\|_{L^2(\Omega)} \leq C \varepsilon^{1},\\
		\left\|\boldsymbol{m}^{\varepsilon}(\bx,t)-\boldsymbol{m}_0(\bx,t)-\varepsilon\boldsymbol{\chi}\left(\frac{\bx}{\epsilon}\right)\nabla\mathbf{m}_0(\bx,t)\right\|_{H^1(\Omega)} \leq C \varepsilon^{1}.
	\end{gathered}
\end{equation*}

%As for the parameters, 
The spatial grid sizes of the reference and the homogenized solutions are $h=1/30$ and $h=1/24$, and the same time step size $\Delta t=5\times 10^{-5}$ is used. Moreover, the exchange coefficient is given by
\begin{equation*}
	\begin{aligned}
    	\ba(\by)=&\Big(1.1+0.25\cos{\big(2\pi(y_1-0.5)\big)}\Big)\Big(1.1+0.25\cos{\big(2\pi(y_2-0.5)\big)}\Big)\\
		&\cdot\Big(1.1+0.25\cos{\big(2\pi(y_3-0.5)\big)}\Big)\times I_3,		    
	\end{aligned}
\end{equation*}
where $I_3$ is the $3\times 3$ identity matrix. 

%From the figure, it can be observed that, constrained by the grid size, the convergence order of the $L^2$ error closely approximates the theoretical values, while the $H^1$ error exhibits a slight deviation.

%From the figure, it can be observed that, constrained by the grid size, the convergence order of the $L^2$ error closely approximates the theoretical values, while the $H^1$ error exhibits a slight deviation.

\cref{tab: periodic_3D} presents the error results at different time $t=j\Delta t$ with various numbers of periods $n$, where the corresponding error convergence orders are depicted in \cref{fig: periodic_3D}. 
%\cref{tab: periodic_3D} and  \cref{fig: periodic_3D} display the error results. 
From the figure, it can be observed that the convergence orders of the $L^2$ error closely align with the theoretical results. Due to limitation of the mesh size for the reference solution, the convergence orders of $H^1$ error exhibit a slight deviation. Specifically, the 
deviation at the initial time 
%decrease in convergence orders at the initial time 
is caused by the approximation of the initial data
for multiscale systems.
%approximate initial data of multiscale systems. 
However, the convergence orders approach the theoretical values over time.

\begin{table}[h!]
    \centering
    \begin{tabular}{|c|c|c|c|c|}
        \hline
        \multirow{2}*{$n$} & \multicolumn{2}{c|}{j=10}  &\multicolumn{2}{c|}{j=$10^2$}\\ \cline{2-5}  
                        &  $e_0$  &  $\tilde{e}_0$  
                        &  $e_0$  &  $\tilde{e}_0$\\   
        \hline
        $2$   & $9.51\times 10^{-2}$ & $9.55\times 10^{-2}$ & $9.88\times 10^{-2}$ & $9.92\times 10^{-2}$\\
        \hline
        $3$   & $6.48\times 10^{-2}$ & $6.51\times 10^{-2}$ & $6.52\times 10^{-2}$ & $6.55\times 10^{-2}$\\
        \hline
        $5$   & $3.72\times 10^{-2}$ & $3.74\times 10^{-2}$ & $3.78\times 10^{-2}$ & $3.80\times 10^{-2}$\\
        \hline
		\quad &  $e_1$  &  $\tilde{e}_1$ &  $e_1$  &  $\tilde{e}_1$\\   
		\hline
        $2$   & $6.85\times 10^{-1}$ & $1.03\times 10^{-1}$ & $7.65\times 10^{-1}$ & $1.13\times 10^{-1}$\\
        \hline
        $3$   & $6.30\times 10^{-1}$ & $9.47\times 10^{-2}$ & $6.65\times 10^{-1}$ & $9.86\times 10^{-2}$\\
        \hline
        $5$   & $5.44\times 10^{-1}$ & $8.22\times 10^{-2}$ & $5.30\times 10^{-1}$ & $7.93\times 10^{-2}$\\
        \hline
	\end{tabular}
    \caption{Error of the 3D Periodic problem under various numbers of periods $n=1/\varepsilon$ at time $t=j\Delta t$.}
    \vspace{-6mm}
    \label{tab: periodic_3D}
\end{table}

\begin{figure}[h!]
    \centering
    \includegraphics[width=1.0\textwidth]{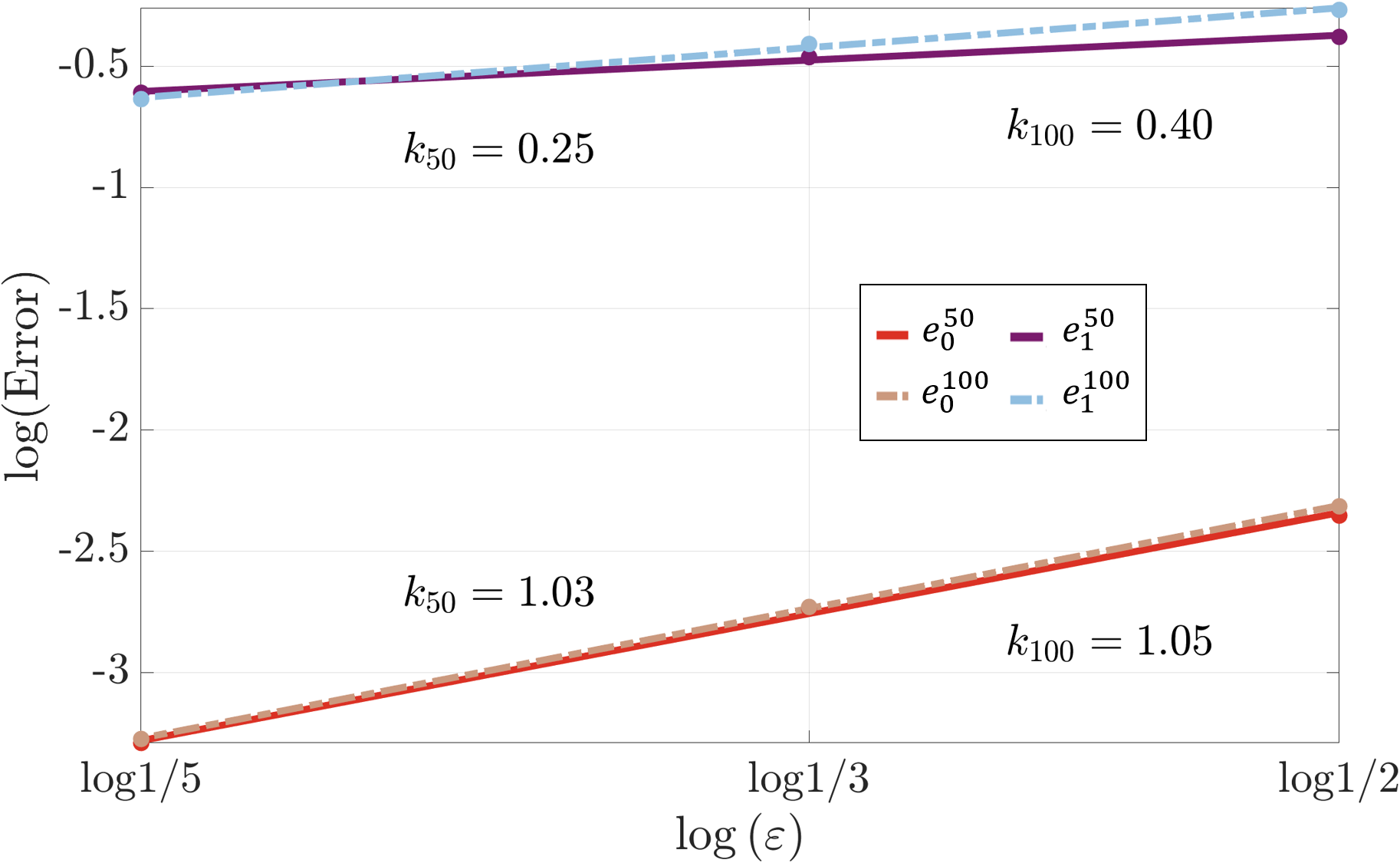}
    \caption{Variation of error $e_0^j$ and $e_1^j$ relative to the cell size $\varepsilon$ across different time steps $j$ in the 3D Periodic problem.}
	\label{fig: periodic_3D}
    \vspace{-6mm}
\end{figure}

\section{Conclusions}\label{sec5}
%Based on the two-scale method, this paper presents novel theoretical results and the corresponding numerical framework for solving the multiscale LLG equation.
In this paper, we present novel theoretical results and the corresponding numerical framework for solving the multiscale LLG equation based on the two-scale method.
%In this paper, we consider the multiscale LLG equation with locally periodic coefficients, which describes the dynamics of ferromagnetic composite materials.
First, we focus on the more realistic and complex multiscale LLG model, which considers the contribution of the exchange, anisotropy, stray, and external magnetic fields. With the proper second-order corrector, we derive the approximation error of \eqref{two scale approximate} in the $H^1$ norm and its uniform $W^{1,6}$ estimate.
%derive the rigorous convergence order for a more realistic and complex multiscale LLG model, incorporating exchange, anisotropy, stray, and external magnetic fields. 
%This complex model is not considered in the work \cite{CLS:apa:2022,LR:MMS:2022}
%\textcolor{red}{we derive the approximation error of \eqref{two scale approximate} in the $H^1$ norm.}
%Moreover, with the proper second-order corrector, we deduce a uniform $W^{1,6}$ estimate of the multiscale solution. 
Second, a robust numerical framework is proposed, which is then employed to effectively validate the theoretical convergence order for both the Periodic and Neumann problems.
%, demonstrating the effectiveness and robustness of the proposed method. 
Third, to overcome the computational prohibitive induced by $\epsilon$, we design an improved implicit scheme for temporal discretization, which significantly reduces the required number of iterations and relaxes the constraints on the time step size. Specifically, the projection and expansion methods are proposed to overcome the inherent non-consistency in the initial data between the multiscale problem and its homogenized problem.

%Third, the numerical framework is employed to validate the theoretical convergence order for the Periodic problem and the Neumann problem involving only the exchange field, as well as the Periodic problem considering the coupling of the exchange field with the degenerate stray field. To examine the asymptotic behavior of the solution from the multiscale system to the homogenized system, we propose both the projection method and the expansion method to obtain the initial data.

Moreover, magnetostriction induced by the variations in magnetization arouses the interest and finds widespread applications in practical engineering, such as magnetic displacement sensors. This physical quantity can be characterized by the system, which couples the multiscale LLG equation with the elastic wave equation \cite{BPPR:IJNA:2014}. In our forthcoming work, the issue will be discussed.

\appendix
\section{Proof of Lemma \ref{thm: Consistency estimate of two-scale approximation}}\label{first lemma}
This proof refer to the estimate of $\boldsymbol{\Theta}^\varepsilon_\mathrm{sf}$ in \cite{CLS:apa:2022}.
%We refer to the work \cite{CLS:apa:2022} for the proof of estimate of $\boldsymbol{\Theta}^\varepsilon_\mathrm{sf}$. 
%Now, let us derive the estimate of error $\boldsymbol{\Theta}^\varepsilon_\mathrm{two}$ defined in \eqref{eqn:equivalent system of m epsilon}. 
By utilizing the two-scale expansion, $\boldsymbol{\Theta}^\varepsilon_\mathrm{ts}$ defined in \eqref{eqn:equivalent system of m epsilon}
 can be rewritten in the following form
\begin{equation*}\label{expansion of consistance error}
	\begin{aligned}
		\boldsymbol{\Theta}^\varepsilon_\mathrm{ts} = 
		&\ \varepsilon^{-2}
		\boldsymbol{\Theta}^{(-2)}_\mathrm{ts}\left(\boldsymbol{x},\frac{\boldsymbol{x}}{\varepsilon}\right)
		+ \varepsilon^{-1}\boldsymbol{\Theta}^{(-1)}_\mathrm{ts}\left(\boldsymbol{x},\frac{\boldsymbol{x}}{\varepsilon}\right)\\ 
		&+ \boldsymbol{\Theta}^{(0)}_\mathrm{ts}\left(\boldsymbol{x},\frac{\boldsymbol{x}}{\varepsilon}\right) 
		+ \varepsilon \boldsymbol{\Theta}^{(1)}_\mathrm{ts}\left(\boldsymbol{x},\frac{\boldsymbol{x}}{\varepsilon}\right) 
		+ \varepsilon^{2}\boldsymbol{\Theta}^{(2)}_\mathrm{ts}\left(\boldsymbol{x},\frac{\boldsymbol{x}}{\varepsilon}\right).
	\end{aligned}
\end{equation*}
%Using the notations from \eqref{twoscale-expansion} and 
%Based on the definition of 
For $\widetilde{\mathbf{m}}^\varepsilon$ and $\widetilde{U}^\varepsilon$ defined in \eqref{def two scale approximate sln}, 
it can be easily checked that $\boldsymbol{\Theta}^{(-2)}_\mathrm{ts} = 0$ and $\boldsymbol{\Theta}^{(-1)}_\mathrm{ts} = 0$. 
In the following, $\boldsymbol{\Theta}^{(0)}_\mathrm{ts} = 0$ will be proved.
%We assert that  $\boldsymbol{\Theta}^{(0)}_\mathrm{two} = 0$. 
In fact, the zero-order term $\boldsymbol{\Theta}^{(0)}_\mathrm{ts}$ yields
\begin{equation}\label{eqn:value of f0}
	\begin{aligned}
		\boldsymbol{\Theta}^{(0)}_\mathrm{ts}(\boldsymbol{x},\boldsymbol{y})
		=&\ \partial_t \mathbf{m}_0 - \alpha \big\{ \mathcal{A}_0 \mathbf{m}_2 + \boldsymbol{h}(\boldsymbol{x},\boldsymbol{y}) \big\} \\
		&+\mathbf{m}_0 \times \big\{ \mathcal{A}_0 \mathbf{m}_2 + \boldsymbol{h}(\boldsymbol{x},\boldsymbol{y}) \big\}  
		- \alpha g(\boldsymbol{x},\boldsymbol{y})\mathbf{m}_0.
	\end{aligned}
\end{equation}
where $\bh(\bx,\by)$ and $g(\bx,\by)$ are defined as
\begin{equation}\label{define h and g}
	\hspace{-1mm}
        \left\{\begin{aligned}
		\bh(\bx,\by) &= \mathcal{A}_1 \m_1 +  \mathcal{A}_2 \m_0 + \mu\nabla_{\bx} U_0 + \mu\nabla_{\by} U_1 - K\left(\mathbf{m}_0 \cdot \mathbf{u}\right) \mathbf{u} + M_s \mathbf{h}_{\mathrm{a}},\\
		g(\boldsymbol{x},\boldsymbol{y}) &= \ba|\nabla_{\boldsymbol{x}} \mathbf{m}_0 + \nabla_{\boldsymbol{y}} \mathbf{m}_1|^2 
		-\mu \mathbf{m}_0 \cdot  (\nabla_{\boldsymbol{x}} U_0 + \nabla_{\boldsymbol{y}} U_1) + K\left(\mathbf{m}_0 \cdot \mathbf{u}\right)^2 - M_s \m_0\cdot\mathbf{h}_{\mathrm{a}}.
	\end{aligned}\right.
\end{equation}
%Then the zero-order term $\boldsymbol{\Theta}^{(0)}_\mathrm{two}$ yields
%On the hand, notice that 
The classical solution $\mathbf{m}_0$ of \eqref{homogenized model periodic} satisfies
\begin{gather}\label{eqn:equivalent system of m0}
	\partial_t\mathbf{m}_0 - \alpha \bh_{\mathrm{eff}}^0
	+ \mathbf{m}_0 \times \bh_{\mathrm{eff}}^0
	+ \alpha
	(\m_0\cdot \bh_{\mathrm{eff}}^0) \mathbf{m}_0
	= 0.
\end{gather}
%where the energy density $g^0[\cdot]$ is given by
%\begin{equation*}
%	\begin{aligned}
%		g^0[\mathbf{m}_0] :=& - \m_0\cdot \mathcal{H}^0_e(\mathbf{m}_0) \\
%  =& \ba^0 \vert \nabla \mathbf{m}_0\vert^2 
%		-
%		K^0 \left( \mathbf{m}_0\cdot \boldsymbol{u} \right)^2
%		+
%		\mu_0 \nabla U_0\cdot \mathbf{m}_0
%		+
%		\mathbf{m}_0\cdot \mathbf{H}^0_{\mathrm{d}} \cdot mathbf{m}_0
%		+
%		M^0  \mathbf{h}_a\cdot \mathbf{m}_0 .
%	\end{aligned}
%\end{equation*}
By substituting \eqref{eqn:equivalent system of m0} into \eqref{eqn:value of f0}, we can derive that
\begin{equation}\label{eqn:value of f0 rewrite}
	\begin{aligned}
		\boldsymbol{\Theta}^{(0)}_\mathrm{ts}(\boldsymbol{x},\boldsymbol{y})
		=& - \alpha \big\{ \mathcal{A}_0 \mathbf{m}_2 + \boldsymbol{h}(\boldsymbol{x},\boldsymbol{y}) - \bh_{\mathrm{eff}}^0 \big\} \\
		&+ \mathbf{m}_0 \times \big\{ \mathcal{A}_0 \mathbf{m}_2 + \boldsymbol{h}(\boldsymbol{x},\boldsymbol{y}) - \bh_{\mathrm{eff}}^0 \big\}  
		- \alpha \big\{g(\boldsymbol{x},\boldsymbol{y}) + (\m_0\cdot \bh_{\mathrm{eff}}^0)\big\}\mathbf{m}_0.
	\end{aligned}
\end{equation}
%Now, let us compute the term $g(\boldsymbol{x},\boldsymbol{y})$. 
Applying $\mathrm{div}_{\boldsymbol{y}} (\ba\nabla_{\boldsymbol{x}}) + \mathrm{div}_{\boldsymbol{x}} (\ba\nabla_{\boldsymbol{y}})$ to the both sides of geometric constraint $\mathbf{m}_0 \cdot\mathbf{m}_1 = 0$, we have
\begin{equation*}\label{eqn:value of f0 form3}
	\begin{aligned} 
		\mathrm{div}_{\boldsymbol{y}} (\ba\nabla_{\boldsymbol{x}} \mathbf{m}_0 ) \cdot \mathbf{m}_1
		&+
		2\ba \nabla_{\boldsymbol{x}}\mathbf{m}_0 \cdot \nabla_{\boldsymbol{y}}\mathbf{m}_1\\
        &+
         \mathbf{m}_0 \cdot \mathrm{div}_{\boldsymbol{x}} (\ba\nabla_{\boldsymbol{y}}\mathbf{m}_1)
		+
		\mathbf{m}_0 \cdot \mathrm{div}_{\boldsymbol{y}} (\ba\nabla_{\boldsymbol{x}} \mathbf{m}_1 ) =0.
	\end{aligned}
\end{equation*}
%On the other hand, 
Moreover, based on the definition in \eqref{first-order corrector}-\eqref{first-order cell problem}, it can be proved that
\begin{equation*}
	\mathbf{m}_1 \cdot \mathrm{div}_{\boldsymbol{y}} (\ba\nabla_{\boldsymbol{y}} \mathbf{m}_1 )
	+
	\mathbf{m}_1 \cdot  \mathrm{div}_{\boldsymbol{y}} (\ba\nabla_{\boldsymbol{x}} \mathbf{m}_0 )=0.
\end{equation*}
With the above results, it yields
\begin{equation*}
	\begin{aligned} 
 2\ba \nabla_{\boldsymbol{x}}\mathbf{m}_0 \cdot \nabla_{\boldsymbol{y}}\mathbf{m}_1
 = &
 \mathbf{m}_1 \cdot \mathrm{div}_{\boldsymbol{y}} (\ba\nabla_{\boldsymbol{y}} \mathbf{m}_1 )\\
		&-
         \mathbf{m}_0 \cdot \mathrm{div}_{\boldsymbol{x}} (\ba\nabla_{\boldsymbol{y}}\mathbf{m}_1)
		-
		\mathbf{m}_0 \cdot \mathrm{div}_{\boldsymbol{y}} (\ba\nabla_{\boldsymbol{x}} \mathbf{m}_1 )\\
  =&
  \m_1\cdot \mathcal{A}_0 \m_1 - \m_0\cdot \mathcal{A}_1 \m_1.
	\end{aligned}
\end{equation*}
Utilizing $\ba|\nabla_{\boldsymbol{x}} \mathbf{m}_0|^2 = -\m_0\cdot \mathcal{A}_2 \m_0$, we have
\begin{equation}\label{formula from constrate}
	\begin{aligned} 
 \ba|\nabla_{\boldsymbol{x}} \mathbf{m}_0 + \nabla_{\boldsymbol{y}} \mathbf{m}_1|^2 
  =&
  - \m_0\cdot (\mathcal{A}_2 \m_0 + \mathcal{A}_1 \m_1)
+   \m_1\cdot \mathcal{A}_0 \m_1 + \ba|\nabla_{\boldsymbol{y}} \mathbf{m}_1|^2 .
	\end{aligned}
\end{equation}
Combining \eqref{formula from constrate} with %$g(\boldsymbol{x},\boldsymbol{y})$ defined in 
\eqref{define h and g}
, it can be derived that 
\begin{equation}\label{A5 g}
    \begin{aligned}
        g(\boldsymbol{x},\boldsymbol{y}) =&  - \m_0\cdot (\mathcal{A}_2 \m_0 + \mathcal{A}_1 \m_1)
+   \m_1\cdot \mathcal{A}_0 \m_1 + \ba|\nabla_{\boldsymbol{y}} \mathbf{m}_1|^2\\
		&+\mu \mathbf{m}_0 \cdot  (\nabla_{\boldsymbol{x}} U_0 + \nabla_{\boldsymbol{y}} U_1) - K\left(\mathbf{m}_0 \cdot \mathbf{u}\right)^2 + M_s \m_0\cdot\mathbf{h}_{\mathrm{a}}\\
  =&- \m_0 \cdot \bh(\bx,\by)
  +
  \m_1\cdot \mathcal{A}_0 \m_1 + \ba|\nabla_{\boldsymbol{y}} \mathbf{m}_1|^2.
    \end{aligned}
\end{equation}
Substituting \eqref{A5 g} into \eqref{eqn:value of f0 rewrite}, we have
%Plugging$g(\boldsymbol{x},\boldsymbol{y})$ into \eqref{eqn:value of f0 rewrite}, it leads to
\begin{equation}\label{theta simp}
	\begin{aligned}
		\boldsymbol{\Theta}^{(0)}_\mathrm{ts}(\boldsymbol{x},\boldsymbol{y})
		=& - \alpha \big\{ \mathcal{A}_0 \mathbf{m}_2 + \boldsymbol{h}(\boldsymbol{x},\boldsymbol{y}) - \bh_{\mathrm{eff}}^0 \big\} 
		+ \mathbf{m}_0 \times \big\{ \mathcal{A}_0 \mathbf{m}_2 + \boldsymbol{h}(\boldsymbol{x},\boldsymbol{y}) - \bh_{\mathrm{eff}}^0 \big\} \\ 
		&+ \alpha \m_0 \cdot \big\{\boldsymbol{h}(\boldsymbol{x},\boldsymbol{y}) - \bh_{\mathrm{eff}}^0\big\}\mathbf{m}_0
  -
  \alpha(\m_1\cdot \mathcal{A}_0 \m_1 + \ba|\nabla_{\boldsymbol{y}} \mathbf{m}_1|^2) \mathbf{m}_0.
	\end{aligned}
\end{equation}
Combing \eqref{eqn:system m2 form3} with \eqref{theta simp}, 
it can derive that $\boldsymbol{\Theta}^{(0)}_\mathrm{ts} = 0$. With the above results, we have 
%Then, Lemma \ref{thm: Consistency estimate of two-scale approximation} can by derived by the following equation
\begin{equation}\label{theta_ts remain}
	\boldsymbol{\Theta}^{\varepsilon}_\mathrm{ts}\left(\boldsymbol{x},\frac{\boldsymbol{x}}{\varepsilon}\right)
	=
	\varepsilon \boldsymbol{\Theta}^{(1)}_\mathrm{ts}\left(\boldsymbol{x},\frac{\boldsymbol{x}}{\varepsilon}\right)
	+
	\varepsilon^2 \boldsymbol{\Theta}^{(2)}_\mathrm{ts}\left(\boldsymbol{x},\frac{\boldsymbol{x}}{\varepsilon}\right).
\end{equation}
Furthermore, it can be checked that the remainder terms $\boldsymbol{\Theta}^{(1)}_\mathrm{ts}$ and $\boldsymbol{\Theta}^{(2)}_\mathrm{ts}$ satisfy
\begin{equation}\label{theta1 theta2 est}
	\left\Vert \boldsymbol{\Theta}^{(1)}_\mathrm{ts}\left(\boldsymbol{x},\frac{\boldsymbol{x}}{\varepsilon}\right) \right\Vert_{L^2(\Omega)} 
	+
	\left\Vert \boldsymbol{\Theta}^{(2)}_\mathrm{ts}\left(\boldsymbol{x},\frac{\boldsymbol{x}}{\varepsilon}\right) \right\Vert_{L^2(\Omega)} 
	\le C,
\end{equation}
where the constant $C$ depends on $\Vert \mathbf{m}_0 \Vert_{W^{4,\infty}(\Omega)}$ and $C_{\mathrm{coe}}$. Then, \cref{thm: Consistency estimate of two-scale approximation} can be derived by \eqref{theta_ts remain} and \eqref{theta1 theta2 est}. The proof is completed.

\section{Proof of Lemma \ref{theorem:stability}}\label{second lemma}
%Here we prove the Lemma \ref{theorem:stability} for the case $n=3$, one can derive the result for case $n=2$ by the same argument. 
%Let $\dot{W}^{1,p}$ denote the Beppo-levi space. 
The following proposition about the stray field has been proved in \cite{D:ZFAUIA:2004}.
\begin{proposition}\label{pro: stray field}
	Let $\dot{W}^{1,p}$ denote the Beppo-levi space $(1< p< +\infty)$. For any vector function $\mathbf{f}\in [L^p(\mathbb{R}^3)]^3$, there exists unique solution $U \in \dot{W}^{1,p}(\mathbb{R}^3)$ such that
	\begin{equation*}
		\Delta U = \bdiv\, \mathbf{f},\quad \text{in $D'(\mathbb{R}^3)$},
	\end{equation*}
	and the solution satisfies the estimate
	\begin{equation*}
		\Vert \nabla U \Vert_{L^p(\mathbb{R}^3)}\le C \Vert \mathbf{f} \Vert_{L^p(\mathbb{R}^3)}.
	\end{equation*}
\end{proposition}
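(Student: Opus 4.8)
The plan is to construct $U$ explicitly through the Newtonian potential and then recognize $\nabla U$ as the image of $\mathbf{f}$ under a vector of Calder\'on--Zygmund singular integral operators, whose $L^p$-boundedness for $1<p<\infty$ delivers the stated estimate. Let $\Phi(\bx) = -\frac{1}{4\pi|\bx|}$ be the fundamental solution of the Laplacian on $\mathbb{R}^3$, so that $\Delta\Phi = \delta_0$ in $D'(\mathbb{R}^3)$. First I would set $U := \Phi * (\bdiv\,\mathbf{f})$, which formally solves $\Delta U = \bdiv\,\mathbf{f}$; to justify each manipulation I would carry out the argument for $\mathbf{f}$ in the dense subspace $[C_c^\infty(\mathbb{R}^3)]^3$ and then recover the general case $\mathbf{f}\in[L^p(\mathbb{R}^3)]^3$ by density once the a priori bound is in hand.

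For such smooth, compactly supported data, integrating by parts moves both derivatives onto the kernel, giving $\partial_i U = \sum_{j} (\partial_i\partial_j\Phi) * f_j$. The key step is to verify that each $K_{ij}(\bx) := \partial_i\partial_j\Phi(\bx) = \frac{1}{4\pi}\big(|\bx|^{-3}\delta_{ij} - 3 x_i x_j |\bx|^{-5}\big)$, defined for $\bx\neq 0$, is a genuine Calder\'on--Zygmund kernel: it is smooth away from the origin, homogeneous of degree $-3$, and satisfies the cancellation condition $\int_{|\omega|=1} K_{ij}(\omega)\,\d\sigma(\omega)=0$, which follows from $\int_{S^2}\omega_i\omega_j\,\d\sigma = \tfrac{4\pi}{3}\delta_{ij}$. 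The distributional second derivative of $\Phi$ then splits into a principal-value singular integral plus a bounded multiple of $\delta_0$, the latter contributing only an $L^p$-bounded pointwise multiplier; these operators are precisely the second-order Riesz-transform compositions. The classical Calder\'on--Zygmund theorem thus yields $\Vert \partial_i U\Vert_{L^p(\mathbb{R}^3)}\le C\Vert \mathbf{f}\Vert_{L^p(\mathbb{R}^3)}$ for every $1<p<\infty$, and summing over $i$ gives the estimate.

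Finally, for uniqueness in the Beppo--Levi space $\dot{W}^{1,p}(\mathbb{R}^3)$, I would take the difference $W$ of two solutions, which is harmonic and satisfies $\nabla W\in[L^p(\mathbb{R}^3)]^3$. Each component of $\nabla W$ is itself harmonic and lies in $L^p(\mathbb{R}^3)$, and by the mean-value property together with H\"older's inequality a globally $L^p$-integrable harmonic function must vanish identically; hence $\nabla W\equiv 0$ and the solution is unique up to an additive constant, that is, unique in $\dot{W}^{1,p}$. I expect the main obstacle to be the rigorous passage from the smooth compactly supported case to general $\mathbf{f}\in L^p$: the convolution $\Phi*(\bdiv\,\mathbf{f})$ is not absolutely convergent for merely $L^p$ data, so $U$ must be defined through the density-extended singular integral representation of $\nabla U$ rather than through the potential directly, and one must then check that the resulting $U$ still satisfies $\Delta U=\bdiv\,\mathbf{f}$ in $D'(\mathbb{R}^3)$. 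The boundedness of the singular integrals is the analytic core, but it is entirely classical, which is why the result may instead simply be cited from \cite{D:ZFAUIA:2004}.
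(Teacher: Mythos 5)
The paper does not actually prove this proposition: it is quoted as a known result and attributed to \cite{D:ZFAUIA:2004}, so there is no internal argument of the paper to compare yours against. Your proposal is the classical proof of that cited fact, and it is correct in substance: the representation $\partial_i U=\sum_j(\partial_i\partial_j\Phi)*f_j$ for smooth compactly supported $\mathbf{f}$, the identification of the distributional derivative $\partial_i\partial_j\Phi$ as a principal-value Calder\'on--Zygmund kernel with vanishing spherical mean plus the term $\tfrac{1}{3}\delta_{ij}\delta_0$ (the trace of which correctly reproduces $\Delta\Phi=\delta_0$), the $L^p$ bound from the Calder\'on--Zygmund theorem for $1<p<\infty$, and the Liouville-type uniqueness obtained by applying the mean-value property and H\"older's inequality to the harmonic components of $\nabla W$ are all standard and sound. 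The two points you flag as delicate are indeed the ones that must be written out to make this a complete proof: (i) for general $\mathbf{f}\in[L^p(\mathbb{R}^3)]^3$ one defines $\nabla U$ as the $L^p$-limit of $\nabla U_k$ along a smooth approximating sequence and then invokes the closedness of the set of gradients of $\dot{W}^{1,p}$-functions in $[L^p(\mathbb{R}^3)]^3$ (equivalently, completeness of the Beppo--Levi space) to recover a potential $U$, after which $\Delta U=\bdiv\,\mathbf{f}$ in $D'(\mathbb{R}^3)$ follows by passing to the limit in the distributional pairing; and (ii) uniqueness holds only up to additive constants, which is exactly uniqueness in $\dot{W}^{1,p}$ since its elements are identified modulo constants. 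With those two routine completions your argument is a self-contained proof of the statement the paper merely cites.
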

Based on \cref{pro: stray field}, for $1<p<+\infty$, one can derive that
\begin{equation}\label{estimate of Phi}
	\begin{aligned}
		&\Vert \nabla \Psi^\varepsilon\Vert_{L^p(\Omega)}
		\le C_p
		\Vert \mathbf{e}^\varepsilon\Vert_{L^p(\Omega)},\quad
		\Vert \nabla U^\varepsilon\Vert_{L^p(\Omega)}
		\le C_p
		\Vert \m^\varepsilon\Vert_{L^p(\Omega)},\\
		&\Vert \nabla \Gamma^\varepsilon\Vert_{L^p(\Omega)}
		\le C_p
		\Vert \widetilde{\m}^\varepsilon\Vert_{L^p(\Omega)}.
	\end{aligned}
\end{equation}
By the $W^{1,p}$ estimate of the oscillatory elliptic problem, the following inequalities are also introduced.
%We also introduce the following inequalities motivated by $W^{1,p}$ estimate for the oscillatory elliptic problem.
\begin{proposition}\label{lemma: regularity for epslon}
	Given $u \in H^{2}(\Omega)$,
 %and $\boldsymbol{\nu} \cdot \ba^\epsilon\nabla u = g$ on $\partial \Omega$, with $g\in B^{-1/2,2}(\partial\Omega)$,
	then it holds that
	\begin{equation*}
		\Vert \nabla u \Vert_{L^{6}(\Omega)}
		\le C
		\Vert \mathcal{A}_\epsilon u \Vert_{L^{2}(\Omega)}
		+ C\Vert \nabla u \Vert_{L^{2}(\Omega)}, 
	\end{equation*}
	%	moreover, if $g = 0$, then one has
	%	\begin{equation*}
		%		\Vert \nabla u \Vert_{L^{6}(\Omega)}
		%		\le C
		%		\Vert \mathcal{A}_\epsilon u \Vert_{L^{2}(\Omega)}.
		%	\end{equation*}
	where the constant $C$ is independent of $\epsilon$.
\end{proposition}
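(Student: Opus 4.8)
The plan is to reduce the oscillatory second-order operator to divergence-form data and then invoke the uniform-in-$\epsilon$ interior $W^{1,p}$ estimate of Avellaneda--Lin type from periodic homogenization theory \cite{S:SIP:2018}. Throughout, I set $f := \mathcal{A}_\epsilon u = \bdiv(\ba^\epsilon \nabla u) \in L^2(\Omega)$, and recall that we work on the torus $\Omega = [0,1]^3$, so that $f$ has zero mean by periodicity.

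First I would rewrite the $L^2$ right-hand side in divergence form. Solving the periodic Poisson problem $\Delta \phi = f$ with $\int_\Omega \phi = 0$ and setting $G := \nabla \phi$, one has $\bdiv\, G = f$. A Poincar\'e inequality together with the standard $H^2$ (Calder\'on--Zygmund) estimate on the torus gives $\Vert \phi \Vert_{H^2(\Omega)} \le C \Vert f \Vert_{L^2(\Omega)}$, and since $n=3$ the Sobolev embedding $H^1(\Omega)\hookrightarrow L^6(\Omega)$ yields
\begin{equation*}
	\Vert G \Vert_{L^6(\Omega)} = \Vert \nabla \phi \Vert_{L^6(\Omega)} \le C \Vert \nabla \phi \Vert_{H^1(\Omega)} \le C \Vert f \Vert_{L^2(\Omega)},
\end{equation*}
with $C$ depending only on $\Omega$, in particular independent of $\epsilon$. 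Consequently $u$ solves the divergence-form equation $\bdiv(\ba^\epsilon \nabla u) = \bdiv\, G$ with $G \in L^6(\Omega)$.

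Next I would apply the uniform interior $W^{1,6}$ estimate for the operator $\bdiv(\ba^\epsilon\nabla\,\cdot\,)$. Under Assumption \eqref{assumption: coefficient} the coefficient $\ba$ is periodic, uniformly elliptic ($a_{\min}\le \ba \le a_{\max}$) and $C^2$, hence H\"older continuous, so the Avellaneda--Lin large-scale regularity theory applies and bounds, on each interior ball, the $L^6$ average of $\nabla u$ by the $L^2$ average of $\nabla u$ plus the $L^6$ average of $G$, with a constant independent of $\epsilon$. Covering the compact torus by finitely many such balls and summing, I obtain
\begin{equation*}
	\Vert \nabla u \Vert_{L^6(\Omega)} \le C\big( \Vert \nabla u \Vert_{L^2(\Omega)} + \Vert G \Vert_{L^6(\Omega)} \big),
\end{equation*}
where $C$ is again independent of $\epsilon$. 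Combining the two displays with $f = \mathcal{A}_\epsilon u$ gives exactly $\Vert \nabla u \Vert_{L^6(\Omega)} \le C(\Vert \mathcal{A}_\epsilon u \Vert_{L^2(\Omega)} + \Vert \nabla u \Vert_{L^2(\Omega)})$, which is the claim.

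The main obstacle is the $\epsilon$-independence in the second step: a naive elliptic $H^2$ bound would produce a constant blowing up like $\epsilon^{-1}$ because of the oscillation in $\ba^\epsilon$. The uniform bound instead relies on the compactness method of Avellaneda--Lin, which transfers the $C^{1,\alpha}$ regularity of the homogenized operator to the heterogeneous one at scales larger than $\epsilon$; the $C^2$ (hence H\"older) regularity and uniform ellipticity in Assumption \eqref{assumption: coefficient} are precisely the hypotheses these theorems require, and the compactness of the torus is what allows a finite family of interior estimates to be patched into the global bound without any boundary-layer correction.
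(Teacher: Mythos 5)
Your argument is correct where it applies, but it follows a genuinely different route from the paper's. The paper works directly on a bounded domain: it sets $g=\boldsymbol{\nu}\cdot\ba^{\varepsilon}\nabla u$ on $\partial\Omega$, invokes the uniform $W^{1,p}$ estimate for the Neumann problem for $\mathcal{A}_\varepsilon$ (Theorem 6.3.2 of \cite{S:SIP:2018}) to get $\Vert\nabla u\Vert_{L^6(\Omega)}\le C\Vert\mathcal{A}_\varepsilon u\Vert_{L^2(\Omega)}+C\Vert g\Vert_{B^{-1/2,2}(\partial\Omega)}$, and then absorbs the boundary term via the $H(\mathrm{div})$-type trace inequality $\Vert g\Vert_{B^{-1/2,2}(\partial\Omega)}\le C\Vert\nabla u\Vert_{L^2(\Omega)}+C\Vert\mathcal{A}_\varepsilon u\Vert_{L^2(\Omega)}$. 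You instead lift the $L^2$ right-hand side to divergence form through an auxiliary periodic Poisson problem and patch the uniform interior (Avellaneda--Lin) $W^{1,6}$ estimates over the torus; both routes draw on the same uniform homogenization regularity theory, and your constants are indeed $\varepsilon$-independent under Assumption (\ref{assumption: coefficient}), so your proof is sound for the periodic problem --- which is the setting in which \cref{thm: convergence result of periodic case} actually uses the proposition, and which matches the phrase ``inner elliptic estimate and the compactness of the torus'' in the introduction. What your version gives up is generality: the statement is asserted for arbitrary $u\in H^2(\Omega)$ on a domain with boundary, and there your covering argument fails near $\partial\Omega$ (interior estimates alone do not reach the boundary, and no periodic extension is available), whereas the paper's boundary $W^{1,p}$ estimate plus trace theorem also covers the Neumann situation invoked in \cref{lemma:estimate of H in L3}. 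Note also that your zero-mean observation for $f=\mathcal{A}_\varepsilon u$, needed to solve the auxiliary Poisson problem, uses the periodicity of $u$ and not merely $u\in H^2(\Omega)$, so it should be stated as an added hypothesis rather than a consequence of the domain being a torus.
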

\begin{proof}
Denote $\boldsymbol{\nu} \cdot \ba^\epsilon\nabla u = g$  on $\partial \Omega$ and refer to Theorem 6.3.2 in \cite{S:SIP:2018},
	we have
 \begin{equation*}
     \Vert \nabla u \Vert_{L^{6}(\Omega)}
		\le C
		\Vert \mathcal{A}_\epsilon u \Vert_{L^{2}(\Omega)}
		+ C\Vert g \Vert_{B^{-1/2,2}(\partial\Omega)}.
 \end{equation*}
 Moreover, one can apply the following trace theorem
 \begin{equation*}
     \Vert g \Vert_{B^{-1/2,2}(\partial\Omega)}
     \le C \Vert \nabla u \Vert_{L^{2}(\Omega)}
     + C \Vert \mathcal{A}_\epsilon u \Vert_{L^{2}(\Omega)}.
 \end{equation*}
 Then the Proposition can be proved.
\end{proof}
With the above results, \cref{theorem:stability} can be proved as follows.\\
%Now let us prove Lemma \ref{theorem:stability}.\\
\noindent\textbf{Step 1: $L^2(\Omega)$ estimate.}
Taking the $L^2(\Omega)$ inner product of \eqref{system of e} and $\mathbf{e}^\varepsilon$, it can be derived that
\begin{equation}
	\begin{aligned}\label{eqn:product with e}
		&\frac{1}{2} \frac{\mathrm{d}}{\mathrm{d} t} \int_{\Omega} \vert \mathbf{e}^\varepsilon \vert^2 \mathrm{d} \boldsymbol{x}
		-
		\alpha  \int_{\Omega} \widetilde{\mathcal{H}}^\varepsilon_e(\mathbf{e}^\varepsilon, \Psi^\varepsilon) \cdot \mathbf{e}^\varepsilon \mathrm{d} \boldsymbol{x}\\
		= & 
		- \int_{\Omega}
		\mathbf{D}_1(\mathbf{e}^\varepsilon, \Psi^\varepsilon)  \cdot \mathbf{e}^\varepsilon \mathrm{d} \boldsymbol{x} 
		-
		\int_{\Omega}
		\mathbf{D}_2(\mathbf{e}^\varepsilon, \Psi^\varepsilon) \cdot \mathbf{e}^\varepsilon\mathrm{d} \boldsymbol{x}
		+
		\int_{\Omega} \boldsymbol{F} \cdot \mathbf{e}^\varepsilon \mathrm{d} \boldsymbol{x},
	\end{aligned}
\end{equation}
where $\boldsymbol{F} =
- \big(\boldsymbol{\Theta}^\varepsilon_\mathrm{ts}
+
\boldsymbol{\Theta}^\varepsilon_\mathrm{sf}\big)$. %Now, let us give the estimates to \eqref{eqn:product with e} term by term. 
Using estimate \eqref{estimate of Phi} and
integration by parts for the second term on the left-hand side of \eqref{eqn:product with e}, it follows that
\begin{equation}\label{inter part of lh}
	- \int_{\Omega} \widetilde{\mathcal{H}}^\varepsilon_e(\mathbf{e}^\varepsilon, \Psi^\varepsilon) \cdot \mathbf{e}^\varepsilon \mathrm{d} \boldsymbol{x}
	\ge
	\sum_{i, j= 1}^{n}
	\int_{\Omega} a^\varepsilon_{ij} \frac{\partial\mathbf{e}^\varepsilon}{\partial x_i} \cdot
	\frac{\partial\mathbf{e}^\varepsilon}{\partial x_j} \mathrm{d} \boldsymbol{x}
	- 
	C \int_{\Omega} \vert \mathbf{e}^\varepsilon \vert^2 \mathrm{d} \boldsymbol{x}.
\end{equation}
Based on the same idea of \eqref{eqn:product with e} and \eqref{inter part of lh} with Young's inequality,
%By the same argument and applying Young's inequality, 
the first term on the right-hand side of \eqref{eqn:product with e} can be estimated by
\begin{equation*}\label{bounded of D_1 e cdot e}
	\begin{aligned}
		- \int_{\Omega}
		\mathbf{D}_1(\mathbf{e}^\varepsilon, \Psi^\varepsilon)  \cdot \mathbf{e}^\varepsilon \mathrm{d} \boldsymbol{x} 
		\le
		& C \int_{\Omega} \vert \mathbf{e}^\varepsilon \vert^2 \mathrm{d} \boldsymbol{x}
		+
		\delta C \int_{\Omega} \vert \nabla \mathbf{e}^\varepsilon \vert^2 \mathrm{d} \boldsymbol{x}.
	\end{aligned}
\end{equation*}
For the second term on the right-hand side of \eqref{eqn:product with e}, the following estimates are used
\begin{equation*}
	\left\{	\begin{aligned}
		&\int_{\Omega}
		\big( \nabla\e^\epsilon \cdot \ba^\epsilon\nabla\m^\epsilon 
		\big)\m^\epsilon \cdot \e^\epsilon\d \bx
		\le  C
		\Vert \nabla \m^\epsilon \Vert_{L^{6}(\Omega)}
		\Vert \e^\epsilon\Vert_{L^{3}(\Omega)}
		\Vert \nabla \e^\epsilon\Vert_{L^{2}(\Omega)}, \\
		&\int_{\Omega}
		\big( \mu^\epsilon\e^\epsilon \cdot \nabla U^\epsilon 
		\big)\m^\epsilon \cdot \e^\epsilon\d \bx
		\le  C
		\Vert \nabla U^\epsilon \Vert_{L^{6}(\Omega)}
		\Vert \e^\epsilon\Vert_{L^{3}(\Omega)}
		\Vert \e^\epsilon\Vert_{L^{2}(\Omega)},\\
		&\int_{\Omega}
		\big( \mu^\epsilon \widetilde{\m}^\epsilon \cdot \nabla \Psi^\epsilon 
		\big)\m^\epsilon \cdot \e^\epsilon\d \bx
		\le C
		\Vert \nabla \Psi^\epsilon  \Vert_{L^{6}(\Omega)}
		\Vert \e^\epsilon\Vert_{L^{3}(\Omega)}
		\Vert \e^\epsilon\Vert_{L^{2}(\Omega)},
	\end{aligned}\right.
\end{equation*}
%and similar estimates to the other terms. 
Moreover, applying \eqref{estimate of Phi} and Young's inequality, together with the embedding theorem of the $H^1(\Omega)$ space into $L^p(\Omega)$ space for $1\le p\le 6$, we have 
\begin{equation*}\label{bounded of D_2 e cdot e}
	\begin{aligned}
		- \int_{\Omega}
		\mathbf{D}_2(\e^\epsilon, \Psi^\epsilon )  \cdot \e^\epsilon\d \bx 
		\le 
		C \Vert \e^\epsilon\Vert_{L^{2}(\Omega)}^2
		+
		\delta C \Vert \e^\epsilon\Vert_{H^{1}(\Omega)}^2,
	\end{aligned}
\end{equation*}
where $C$ depends on $\Vert \nabla \m^\epsilon  \Vert_{L^{6}(\Omega)}$, $\Vert \nabla \widetilde{\m}^\epsilon  \Vert_{L^{6}(\Omega)}$,  $\Vert \nabla U^\epsilon  \Vert_{L^{6}(\Omega)}$,  $\Vert \nabla \Gamma^\epsilon  \Vert_{L^{6}(\Omega)}$, $\Vert \m^\epsilon \Vert_{L^{\infty}(\Omega)}$ and  $\Vert \widetilde{\m}^\epsilon \Vert_{L^{\infty}(\Omega)}$.
For the last term in \eqref{eqn:product with e}, the following estimate can be deduced by Young's inequality and Sobolev inequality 
\begin{equation*}
	\begin{aligned}
		\int_{\Omega} \boldsymbol{F} \cdot \mathbf{e}^\epsilon \mathrm{d} \boldsymbol{x}
		\le& C
		\Vert \boldsymbol{\Theta}^\varepsilon_\mathrm{ts} \Vert_{L^2(\Omega)}^2
		+ C
		\Vert \mathbf{e}^\epsilon \Vert_{L^2(\Omega)}
		+ C
		\Vert \boldsymbol{\Theta}^\varepsilon_\mathrm{sf} \Vert_{L^{6/5}(\Omega)}^2
		+ \delta C
		\Vert \mathbf{e}^\epsilon \Vert_{H^1(\Omega)},
	\end{aligned}
\end{equation*}
with any small $\delta>0$. 
Utilizing the above estimates, it can be obtained from \eqref{eqn:product with e} that
\begin{equation}\label{increasing inequality}
	\begin{aligned}
		&\frac{1}{2}\frac{\d}{\d t} \Vert \e^\epsilon\Vert_{L^{2}(\Omega)}^2
		+
		(\alpha a_{\mathrm{min}} - C \delta) \Vert \nabla \e^\epsilon\Vert_{L^{2}(\Omega)}^2\\
		\le 
		&C \Vert \e^\epsilon\Vert_{L^{2}(\Omega)}^2
		+ 
		C
		\Vert \boldsymbol{\Theta}^\varepsilon_\mathrm{ts} \Vert_{L^2(\Omega)}^2
		+ C
		\Vert \boldsymbol{\Theta}^\varepsilon_\mathrm{sf} \Vert_{L^{6/5}(\Omega)}^2.
	\end{aligned}
\end{equation}
Let $\delta\to 0$, the estimate \eqref{ineq of e^eps_b n=3} in \cref{theorem:stability} can be directly deduced by Gr\"{o}nwall's inequality.

\noindent\textbf{Step 2: $H^1(\Omega)$ estimate.}
Taking the $L^2(\Omega)$ inner product of \eqref{system of e} and $\widetilde{\mathcal{H}}^\epsilon_e(\e^\epsilon, \Psi^\epsilon )$, we obtain 
\begin{equation}\label{eqn:product with H}
	\begin{aligned}
		& - \int_{\Omega} \partial_t \mathbf{e}^\epsilon \cdot \widetilde{\mathcal{H}}^\epsilon_e(\e^\epsilon, \Psi^\epsilon ) \mathrm{d} \boldsymbol{x}
		+
		\alpha  \int_{\Omega} \widetilde{\mathcal{H}}^\epsilon_e(\e^\epsilon, \Psi^\epsilon ) \cdot \widetilde{\mathcal{H}}^\epsilon_e(\e^\epsilon, \Psi^\epsilon ) \mathrm{d} \boldsymbol{x}\\
		= &
		\int_{\Omega}
		\mathbf{D}_1 (\e^\epsilon, \Psi^\epsilon )  \cdot \widetilde{\mathcal{H}}^\epsilon_e(\e^\epsilon, \Psi^\epsilon ) \mathrm{d} \boldsymbol{x} 
		+
		\int_{\Omega}
		\mathbf{D}_2(\e^\epsilon, \Psi^\epsilon ) \cdot \widetilde{\mathcal{H}}^\epsilon_e(\e^\epsilon, \Psi^\epsilon ) \mathrm{d} \boldsymbol{x}\\
		&-
		\int_{\Omega}
		\boldsymbol{F} \cdot \widetilde{\mathcal{H}}^\epsilon_e(\e^\epsilon, \Psi^\epsilon ) \mathrm{d} \boldsymbol{x}.
	\end{aligned}
\end{equation}
%In the following, we give the estimates to \eqref{eqn:product with H} term by term. 
Subsequently, the term-by-term estimates for \eqref{eqn:product with H} are provided.
The integration by parts on the first term of \eqref{eqn:product with H} yields
\begin{align*}
	- \int_{\Omega} \partial_t \mathbf{e}^\epsilon \cdot \widetilde{\mathcal{H}}^\epsilon_e(\e^\epsilon, \Psi^\epsilon ) \mathrm{d} \boldsymbol{x}
	= &
	\frac{\mathrm{d}}{\mathrm{d} t} \mathcal{G}_{\mathcal{L}}^\epsilon[\mathbf{e}^\epsilon, \Psi^\epsilon].
\end{align*}
Here, $\mathcal{G}_{\mathcal{L}}^\epsilon[\mathbf{e}^\epsilon,\Psi^\epsilon] = \int_{\Omega} g_l^\epsilon (\mathbf{e}^\epsilon,\Psi^\epsilon) \mathrm{d} \boldsymbol{x}$ is the total energy, with the energy density $g_l^\epsilon$ defined in \eqref{energy density of epsilon}.
Using $\mathbf{m}^\epsilon\times \widetilde{\mathcal{H}}^\epsilon_e(\e^\epsilon, \Psi^\epsilon ) \cdot\widetilde{\mathcal{H}}^\epsilon_e(\e^\epsilon, \Psi^\epsilon )=0$, the first term on the right-hand side of \eqref{eqn:product with H} can be estimated by Sobolev inequality
\begin{align*}
	\int_{\Omega}
	\mathbf{D}_1 (\e^\epsilon, \Psi^\epsilon ) \cdot \widetilde{\mathcal{H}}^\epsilon_e(\e^\epsilon, \Psi^\epsilon ) \mathrm{d} \boldsymbol{x} 
	&\le 
	C \Vert \mathcal{H}^\epsilon_e (\widetilde{\mathbf{m}}^\epsilon, \Gamma^\epsilon) \Vert_{L^{3}(\Omega)}
	\Vert \mathbf{e}^\epsilon \Vert_{L^{6}(\Omega)}
	\Vert \widetilde{\mathcal{H}}^\epsilon_e(\e^\epsilon, \Psi^\epsilon ) \Vert_{L^{2}(\Omega)} \nonumber\\
	&\le
	C \Vert \mathbf{e}^\epsilon \Vert_{H^{1}(\Omega)}^2
	+
	\delta C\Vert \widetilde{\mathcal{H}}^\epsilon_e(\e^\epsilon, \Psi^\epsilon ) \Vert_{L^{2}(\Omega)}^2, \label{bounded of D_1 e dot H e}
\end{align*}
where $C = C^0\big(1 + \Vert \mathcal{H}^\epsilon_e (\widetilde{\mathbf{m}}^\epsilon, \Gamma^\epsilon) \Vert_{L^{3}(\Omega)}^2 \big) $. 
For the second term on the right-hand side of \eqref{eqn:product with H}, the following estimates are used
\begin{equation*}
        \hspace{-3mm}
	\left\{	\begin{aligned}
		&\int_{\Omega}
		\big( \nabla\e^\epsilon \cdot \ba^\epsilon\nabla\m^\epsilon 
		\big)\m^\epsilon \cdot \widetilde{\mathcal{H}}^\epsilon_e(\e^\epsilon, \Psi^\epsilon )\d \bx
		\le  C
		\Vert \nabla \m^\epsilon \Vert_{L^{6}(\Omega)}
		\Vert\nabla\e^\epsilon \Vert_{L^{3}(\Omega)}
		\Vert \widetilde{\mathcal{H}}^\epsilon_e(\e^\epsilon, \Psi^\epsilon )\Vert_{L^{2}(\Omega)}, \\
		&\int_{\Omega}
		\big( \mu^\epsilon\e^\epsilon \cdot \nabla U^\epsilon 
		\big)\m^\epsilon \cdot \widetilde{\mathcal{H}}^\epsilon_e(\e^\epsilon, \Psi^\epsilon )\d \bx
		\le  C
		\Vert \nabla U^\epsilon \Vert_{L^{6}(\Omega)}
		\Vert \e^\epsilon\Vert_{L^{3}(\Omega)}
		\Vert \widetilde{\mathcal{H}}^\epsilon_e(\e^\epsilon, \Psi^\epsilon )\Vert_{L^{2}(\Omega)},\\
		&\int_{\Omega}
		\big( \mu^\epsilon \widetilde{\m}^\epsilon \cdot \nabla \Psi^\epsilon 
		\big)\m^\epsilon \cdot \widetilde{\mathcal{H}}^\epsilon_e(\e^\epsilon, \Psi^\epsilon )\d \bx
		\le 
		C \Vert \widetilde{\m}^\epsilon \Vert_{L^{\infty}(\Omega)}
		\Vert \nabla \Psi^\epsilon \Vert_{L^{2}(\Omega)}
		\Vert \widetilde{\mathcal{H}}^\epsilon_e(\e^\epsilon, \Psi^\epsilon )\Vert_{L^{2}(\Omega)}.
	\end{aligned}\right.
\end{equation*}
Moreover, employing interpolation inequality and Proposition \ref{lemma: regularity for epslon}, it can be derived
\begin{equation*}
	\begin{aligned}
		\Vert \nabla \e^\epsilon\Vert_{L^{3}(\Omega)}
		\le &
		\Vert \nabla \e^\epsilon\Vert_{L^{2}(\Omega)}^{1/2} 
		\Vert \nabla \e^\epsilon\Vert_{L^{6}(\Omega)}^{1/2} \\
		\le & C
		\Vert \nabla \e^\epsilon\Vert_{L^{2}(\Omega)}^{1/2} 
		(\Vert\mathcal{A}_\epsilon\e^\epsilon \Vert_{L^{2}(\Omega)} 
		+ 
		\Vert \nabla \e^\epsilon\Vert_{L^{2}(\Omega)})^{1/2}\\
		\le & C
		\Vert \nabla \e^\epsilon\Vert_{L^{2}(\Omega)}^{1/2} 
		(1 + \Vert\widetilde{\mathcal{H}}^\epsilon_e(\e^\epsilon, \Psi^\epsilon ) \Vert_{L^{2}(\Omega)} 
		+ 
		\Vert \nabla \e^\epsilon\Vert_{L^{2}(\Omega)})^{1/2}.
	\end{aligned}
\end{equation*}
Combining the above estimates with \eqref{estimate of Phi}, it yields
\begin{equation*}\label{bounded of D_2 e dot H e}
	\int_{\Omega}
	\mathbf{D}_2 (\e^\epsilon, \Psi^\epsilon )  \cdot \widetilde{\mathcal{H}}^\epsilon_e(\e^\epsilon, \Psi^\epsilon )  \d \bx 
	\le C +
	C \Vert \e^\epsilon\Vert_{H^{1}(\Omega)}^2
	+
	\delta \Vert \widetilde{\mathcal{H}}^\epsilon_e(\e^\epsilon, \Psi^\epsilon ) \Vert_{L^{2}(\Omega)}^2,
\end{equation*}
where the constant $C$ depends on 
$\Vert \m^\epsilon  \Vert_{W^{1,\infty}(\Omega)}$, 
$\Vert \widetilde{\m}^\epsilon  \Vert_{W^{1,\infty}(\Omega)}$,  $\Vert \nabla U^\epsilon  \Vert_{L^{6}(\Omega)}$, and $\Vert \nabla \Gamma^\epsilon  \Vert_{L^{6}(\Omega)}$.
For the last term in \eqref{eqn:product with H}, we have
\begin{equation*}
	\begin{aligned}
		\int_{\Omega} \boldsymbol{F} \cdot \widetilde{\mathcal{H}}^\epsilon_e(\e^\epsilon, \Psi^\epsilon ) \d \bx
		\le& C
		\Vert \boldsymbol{F} \Vert_{L^2(\Omega)}^2
		+ \delta
		\Vert \widetilde{\mathcal{H}}^\epsilon_e(\e^\epsilon, \Psi^\epsilon )\Vert_{L^2(\Omega)}^2.
	\end{aligned}
\end{equation*}
Substituting the above results into \eqref{eqn:product with H}, 
it can be derived that
\begin{align*}
	\frac{\d}{\d t} \mathcal{G}_{\mathcal{L}}^\epsilon[\mathbf{e}^\epsilon,\Psi^\epsilon]
	+ &
	(\alpha - C \delta)\Vert \widetilde{\mathcal{H}}^\epsilon_e(\e^\epsilon, \Psi^\epsilon ) \Vert_{L^{2}(\Omega)}^2
	\le 
	C \big(\Vert \e^\epsilon\Vert_{H^1(\Omega)}^2 
	+
	\Vert \boldsymbol{F} \Vert_{L^{2}(\Omega)}^2 \big).
\end{align*}
Integrating the above inequality over $[0,t]$ for $0<t< T$ and using the following equation
\begin{align*}
	\mathcal{G}_{\mathcal{L}}^\epsilon[\mathbf{e}^\epsilon,\Psi^\epsilon]
	\ge &
	\frac{a_{\mathrm{min}}}{2}\Vert \nabla \mathbf{e}^\epsilon \Vert_{L^{2}(\Omega)}^2 - C,
\end{align*}
one can finally derive
\begin{multline*}
	\frac{a_{\mathrm{min}}}{2}\Vert \nabla \mathbf{e}^\epsilon(t) \Vert_{L^{2}(\Omega)}^2
	+ 
	(\alpha - C \delta)\int_0^t
	\Vert \widetilde{\mathcal{H}}^\epsilon_e(\e^\epsilon, \Psi^\epsilon ) \Vert_{L^2(\Omega)}^2 \mathrm{d} \tau \\
	\le 
	C \int_0^t \big(\Vert \mathbf{e}^\epsilon \Vert_{L^2(\Omega)}^2 
	+
	\Vert \nabla\mathbf{e}^\epsilon \Vert_{L^2(\Omega)}^2 
	+
	\Vert \boldsymbol{F} \Vert_{L^{2}(\Omega)}^2 
	\big) \mathrm{d} \tau.
\end{multline*}
%\eqref{stability result in H^1} is then derived after taking $\delta$ small enough and the application of Gr\"{o}nwall's inequality.
Let $\delta\to 0$, the estimate \eqref{stability result in H^1} in \cref{theorem:stability} can be directly deduced by Gr\"{o}nwall's inequality. The proof is completed.

\section{Proof of Lemma \ref{regularety 3}}\label{third lemma}
The following estimates will be used:
\begin{gather}\label{eqn:nabla m dot nabla m}
	a_{\mathrm{max}}^{-1}
	\Vert \m^\epsilon \cdot\mathcal{A}_\epsilon \m^\epsilon \Vert_{L^{3}(\Omega)}^3
	\le
	\Vert \nabla \m^\epsilon \Vert_{L^{6}(\Omega)}^6
	\le 
	a_{\mathrm{min}}^{-1}
	\Vert \mathcal{A}_\epsilon \m^\epsilon \Vert_{L^{3}(\Omega)}^3,\\
	\label{relation between A and H}
	\Vert \mathcal{A}_\epsilon \m^\epsilon \Vert_{L^{p}(\Omega)} - C_p
	\le 
	\Vert \mathcal{H}^\epsilon_e(\m^\epsilon,U^\epsilon) \Vert_{L^{p}(\Omega)}
	\le 
	\Vert \mathcal{A}_\epsilon \m^\epsilon \Vert_{L^{p}(\Omega)} + C_p,
\end{gather}
with $1<p<+\infty$. Here, \eqref{eqn:nabla m dot nabla m} can be derived by $- \ba^\epsilon \abs{\nabla \m^\epsilon}^2 = \m^\epsilon \cdot\mathcal{A}_\epsilon \m^\epsilon$ with  $\abs{\m^\epsilon} = 1$ and the symmetry of $\ba^\epsilon$ in Assumption \eqref{assumption: coefficient}. \eqref{relation between A and H} can be derived by the estimate \eqref{estimate of Phi}. For any vector $\mathbf{v}$, 
an orthogonal decomposition is given by
\begin{equation}\label{eqn: orthogonality decomposition}
	\mathbf{v}
	=
	(\m^\epsilon \cdot \mathbf{v}) \m^\epsilon
	-
	\m^\epsilon \times (\m^\epsilon \times \mathbf{v}).
\end{equation}
In the following, we introduce an interpolation inequality of the effective field $\mathcal{H}^\epsilon_e$ defined in \eqref{mathcal H}.%, where $\m^\epsilon$ is the $S^2$-valued function.

\begin{lemma}\label{lemma:estimate of H in L3}
	Given $\m^\epsilon\in H^3(\Omega)$, $U^\epsilon$ that satisfies \eqref{eqn:LLG system form 3}, $\abs{\m^\epsilon} = 1$ and the Neumann boundary condition $\boldsymbol{\nu}\cdot \ba^\epsilon\nabla \m^\epsilon = 0$. Then it holds for any $0<\delta<1$
	\begin{equation}\label{term of high order}
		\Vert \mathcal{H}^\epsilon_e(\m^\epsilon,U^\epsilon) \Vert_{L^{3}(\Omega)}^{3}
		\le 
		C_\delta + C_\delta\Vert \mathcal{H}^\epsilon_e (\m^\epsilon,U^\epsilon) \Vert_{L^{2}(\Omega)}^{6}
		+ \delta
		\Vert \m^\epsilon \times \nabla \mathcal{H}^\epsilon_e(\m^\epsilon,U^\epsilon) \Vert_{L^{2}(\Omega)}^{2},
	\end{equation}
	where the constant $C_\delta$ depends on $\delta$, but is independent of $\epsilon$.
\end{lemma}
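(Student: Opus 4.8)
The plan is to reduce the estimate to a three–dimensional Gagliardo--Nirenberg interpolation and then remove the component of $\nabla\mathcal{H}^\epsilon_e$ parallel to $\m^\epsilon$ by exploiting the sphere constraint. First I would apply the Gagliardo--Nirenberg inequality to $\mathcal{H}^\epsilon_e(\m^\epsilon,U^\epsilon)$, namely $\|\mathcal{H}^\epsilon_e\|_{L^3(\Omega)}\le C\|\nabla\mathcal{H}^\epsilon_e\|_{L^2(\Omega)}^{1/2}\|\mathcal{H}^\epsilon_e\|_{L^2(\Omega)}^{1/2}+C\|\mathcal{H}^\epsilon_e\|_{L^2(\Omega)}$, and cube it, so that the task becomes bounding $\|\nabla\mathcal{H}^\epsilon_e\|_{L^2(\Omega)}^{3/2}\|\mathcal{H}^\epsilon_e\|_{L^2(\Omega)}^{3/2}$ in the prescribed form.

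The key step is to replace the full gradient $\nabla\mathcal{H}^\epsilon_e$ by its tangential part $\m^\epsilon\times\nabla\mathcal{H}^\epsilon_e$. I would apply the orthogonal decomposition \eqref{eqn: orthogonality decomposition} to each $\partial_i\mathcal{H}^\epsilon_e$, writing $\partial_i\mathcal{H}^\epsilon_e=(\m^\epsilon\cdot\partial_i\mathcal{H}^\epsilon_e)\m^\epsilon-\m^\epsilon\times(\m^\epsilon\times\partial_i\mathcal{H}^\epsilon_e)$. Since $|\m^\epsilon|=1$, the perpendicular piece has magnitude $|\m^\epsilon\times\partial_i\mathcal{H}^\epsilon_e|$, so its contribution to $\|\nabla\mathcal{H}^\epsilon_e\|_{L^2}$ is exactly $\|\m^\epsilon\times\nabla\mathcal{H}^\epsilon_e\|_{L^2}$. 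For the parallel piece I would use the identity obtained by differentiating $|\m^\epsilon|^2=1$, namely $\m^\epsilon\cdot\operatorname{div}(\ba^\epsilon\nabla\m^\epsilon)=-\ba^\epsilon|\nabla\m^\epsilon|^2$, which together with \eqref{mathcal H} shows that $\m^\epsilon\cdot\mathcal{H}^\epsilon_e$ agrees, up to bounded coefficients, with the energy density in \eqref{energy density of epsilon}. Writing $\m^\epsilon\cdot\partial_i\mathcal{H}^\epsilon_e=\partial_i(\m^\epsilon\cdot\mathcal{H}^\epsilon_e)-\partial_i\m^\epsilon\cdot\mathcal{H}^\epsilon_e$ and using $\partial_i\m^\epsilon\cdot\m^\epsilon=0$ to discard the leading contribution of the last term, the parallel piece reduces to products of $\nabla\m^\epsilon$ with $\mathcal{H}^\epsilon_e$ and its energy-density counterpart, which I would estimate in $L^2$ by H\"older through $\|\nabla\m^\epsilon\|_{L^6}\|\mathcal{H}^\epsilon_e\|_{L^3}$; here $\|\nabla\m^\epsilon\|_{L^6}$ is itself controlled by $\|\mathcal{H}^\epsilon_e\|_{L^3}$ via \eqref{eqn:nabla m dot nabla m}--\eqref{relation between A and H}. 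The Neumann condition $\boldsymbol{\nu}\cdot\ba^\epsilon\nabla\m^\epsilon=0$ and Proposition \ref{lemma: regularity for epslon} are available to handle the boundary terms and the second-order quantities produced along the way.

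Finally I would collect these into $\|\nabla\mathcal{H}^\epsilon_e\|_{L^2}\le C\|\m^\epsilon\times\nabla\mathcal{H}^\epsilon_e\|_{L^2}+(\text{lower order})$, insert it into the cubed Gagliardo--Nirenberg bound, and apply Young's inequality with conjugate exponents $4/3$ and $4$ to split $\|\m^\epsilon\times\nabla\mathcal{H}^\epsilon_e\|_{L^2}^{3/2}\|\mathcal{H}^\epsilon_e\|_{L^2}^{3/2}\le\delta\|\m^\epsilon\times\nabla\mathcal{H}^\epsilon_e\|_{L^2}^2+C_\delta\|\mathcal{H}^\epsilon_e\|_{L^2}^6$. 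The remaining lower-order products, being controlled by $\|\mathcal{H}^\epsilon_e\|_{L^3}$ to a power strictly below $3$, are absorbed into the left-hand side of \eqref{term of high order} and into $C_\delta+C_\delta\|\mathcal{H}^\epsilon_e\|_{L^2}^6$, which closes the argument.

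I expect the main obstacle to be the control of the parallel component $\m^\epsilon\cdot\nabla\mathcal{H}^\epsilon_e$: a naive estimate would differentiate the energy density directly and generate third-order and formally $\epsilon^{-1}$-singular terms that cannot be bounded uniformly in $\epsilon$. The whole point is that the constraint $|\m^\epsilon|=1$ enforces cancellations so that only $\epsilon$-independent, absorbable quantities survive; carrying out this reduction carefully, and verifying that every surviving term is either of strictly subcritical order in $\|\mathcal{H}^\epsilon_e\|_{L^3}$ or of the form $\|\mathcal{H}^\epsilon_e\|_{L^2}^6$, is the delicate part of the proof.
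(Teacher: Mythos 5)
Your overall strategy---interpolate $L^3$ between $L^2$ and $H^1$ and retain only the tangential derivative $\m^\epsilon\times\nabla\mathcal{H}^\epsilon_e$---is in the right spirit, but the order in which you perform the two reductions opens a gap that you acknowledge and do not close. By applying Gagliardo--Nirenberg to the full field $\mathcal{H}^\epsilon_e$, you are forced to control the full gradient $\Vert\nabla\mathcal{H}^\epsilon_e\Vert_{L^2}$, and in particular its normal component $\m^\epsilon\cdot\partial_i\mathcal{H}^\epsilon_e=\partial_i(\m^\epsilon\cdot\mathcal{H}^\epsilon_e)-\partial_i\m^\epsilon\cdot\mathcal{H}^\epsilon_e$. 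Since $\m^\epsilon\cdot\mathcal{H}^\epsilon_e=-\ba^\epsilon|\nabla\m^\epsilon|^2-K^\epsilon(\m^\epsilon\cdot\bu)^2+\cdots$, the term $\partial_i(\m^\epsilon\cdot\mathcal{H}^\epsilon_e)$ produces $(\partial_i\ba^\epsilon)|\nabla\m^\epsilon|^2=O(\epsilon^{-1})|\nabla\m^\epsilon|^2$ as well as $2\ba^\epsilon\nabla\m^\epsilon\cdot\partial_i\nabla\m^\epsilon$, i.e.\ genuinely $\epsilon^{-1}$-singular and third-order quantities. Because the decomposition is orthogonal, nothing coming from the tangential piece can cancel them, and uniform-in-$\epsilon$ elliptic theory supplies only $W^{1,p}$ control (Proposition \ref{lemma: regularity for epslon}), not the $W^{2,3}$ control these terms would require. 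Consequently the inequality $\Vert\nabla\mathcal{H}^\epsilon_e\Vert_{L^2}\le C\Vert\m^\epsilon\times\nabla\mathcal{H}^\epsilon_e\Vert_{L^2}+(\text{absorbable})$ on which your argument rests is not available with $\epsilon$-independent constants; the ``cancellations'' you invoke at the end are asserted rather than exhibited.

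The paper's proof avoids this entirely by reversing the order of the two steps: it decomposes $\mathcal{H}^\epsilon_e$ itself (not its gradient) via \eqref{eqn: orthogonality decomposition} into a normal part $(\m^\epsilon\cdot\mathcal{H}^\epsilon_e)\m^\epsilon$ and a tangential part $\m^\epsilon\times(\m^\epsilon\times\mathcal{H}^\epsilon_e)$ \emph{before} any interpolation. The normal part is then estimated pointwise, with no differentiation: $|\m^\epsilon\cdot\mathcal{H}^\epsilon_e|^3\le C+C|\nabla\m^\epsilon|^6$, and $\Vert\nabla\m^\epsilon\Vert_{L^6}^6\le C+C\Vert\mathcal{H}^\epsilon_e\Vert_{L^2}^6$ by Proposition \ref{lemma: regularity for epslon} together with \eqref{relation between A and H}. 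Only the tangential part $\m^\epsilon\times\mathcal{H}^\epsilon_e$ receives the Sobolev--Young treatment, and its gradient splits as $\nabla\m^\epsilon\times\mathcal{H}^\epsilon_e+\m^\epsilon\times\nabla\mathcal{H}^\epsilon_e$: the second summand is exactly the $\delta$-term of \eqref{term of high order}, while the first is absorbed into the left-hand side via Young's inequality and \eqref{eqn:nabla m dot nabla m}--\eqref{relation between A and H}. If you restructure your argument in this order, the problematic normal derivative never appears.
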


\begin{proof}
	Applying decomposition \eqref{eqn: orthogonality decomposition} with $\mathbf{v} = \mathcal{H}^\epsilon_e(\m^\epsilon,U^\epsilon)$, we have
	\begin{equation}\label{case of n le 2}
		\begin{aligned}
			\Vert \mathcal{H}^\epsilon_e(\m^\epsilon,U^\epsilon) \Vert_{L^{3}(\Omega)}^{3}
			\le &
			\int_{\Omega}   
			\vert \m^\epsilon \cdot\mathcal{H}^\epsilon_e(\m^\epsilon,U^\epsilon) \vert^3  \d \bx\\
			& +
			\int_{\Omega}
			\vert \m^\epsilon \times \mathcal{H}^\epsilon_e(\m^\epsilon,U^\epsilon) \vert^3  \d \bx
			=: \mathcal{I}_1 + \mathcal{I}_2.
		\end{aligned}
	\end{equation}
	%Now, we estimate the right-hand side of \eqref{case of n le 2} separately.
    
    Then, the right-hand side of \eqref{case of n le 2} is estimated as follows.
    For $\mathcal{I}_1$, it can be derived by applying \eqref{eqn:nabla m dot nabla m}-\eqref{relation between A and H} and Proposition \ref{lemma: regularity for epslon}
	\begin{equation}\label{I1}
		\begin{aligned}
			\mathcal{I}_1
			\le &
			C + C
			\Vert \nabla \m^\epsilon \Vert_{L^{6}(\Omega)}^6  
			\le 
			C + C\Vert \mathcal{H}^\epsilon_e (\m^\epsilon,U^\epsilon) \Vert_{L^{2}(\Omega)}^{6}.
		\end{aligned}
	\end{equation}
	As for $\mathcal{I}_2$, with Sobolev inequality for $n\le 3$, we have
    \begin{equation}\label{I2}
	%\begin{align*}
		 \mathcal{I}_2
		%		\le 
		%		\Vert \m^\epsilon \times \mathcal{H}^\epsilon_e(\m^\epsilon) \Vert_{L^{2}(\Omega)}^{(6-n)/2}
		%		\Vert \m^\epsilon \times \mathcal{H}^\epsilon_e(\m^\epsilon) \Vert_{H^{1}(\Omega)}^{n/2} \nn \\
		\leq C +C\Vert \m^\epsilon \times \mathcal{H}^\epsilon_e(\m^\epsilon,U^\epsilon) \Vert_{L^{2}(\Omega)}^{6}
		+\delta^*\Vert \m^\epsilon \times \mathcal{H}^\epsilon_e(\m^\epsilon,U^\epsilon) \Vert_{H^{1}(\Omega)}^{2}. %\label{eqn:process in regurality 2}
	%\end{align*}
    \end{equation}
	For the last term of \eqref{I2}, by applying \eqref{eqn:nabla m dot nabla m}-\eqref{relation between A and H}, it can be proved that
    \begin{equation*}\label{eqn:process in regurality}
		\begin{aligned}
			\delta^* \Vert \nabla \m^\epsilon \times \mathcal{H}^\epsilon_e(\m^\epsilon) \Vert_{L^{2}(\Omega)}^{2}
			\le &
			\delta^* \Vert \nabla \m^\epsilon \Vert_{L^{6}(\Omega)}^{6}
			+
			\delta^* \Vert \mathcal{H}^\epsilon_e(\m^\epsilon,U^\epsilon) \Vert_{L^{3}(\Omega)}^{3}\\
			\le &
			C + C\delta^* \Vert \mathcal{H}^\epsilon_e(\m^\epsilon,U^\epsilon) \Vert_{L^{3}(\Omega)}^{3}.
		\end{aligned}
	\end{equation*}
	Combining \eqref{I1}, \eqref{I2} with \eqref{case of n le 2}, one can finally derive
	\begin{equation*}
		\begin{aligned}
			(1 - C \delta^*)\Vert \mathcal{H}^\epsilon_e(\m^\epsilon,U^\epsilon) \Vert_{L^{3}(\Omega)}^{3}
			\le &
			C + C\Vert \mathcal{H}^\epsilon_e (\m^\epsilon,U^\epsilon) \Vert_{L^{2}(\Omega)}^{6}\\
			&+ \delta^*
			\Vert \m^\epsilon \times \nabla \mathcal{H}^\epsilon_e(\m^\epsilon,U^\epsilon) \Vert_{L^{2}(\Omega)}^{2}.
		\end{aligned}
	\end{equation*}
	Let $\delta^* < \frac{1}{2C}$, \eqref{term of high order} can be derived by $\delta = \delta^*/(1 - C \delta^*)< 1$.
\end{proof}

%\subsection{Regularity of the multi model equation}\label{regularty}
%Now, let us recall some energy property of the LLG equation, and give the uniform regularity result.
%Based on the energy property of the LLG equation, 
In the following, we present the results about uniform boundedness.
%Using notation $\mathcal{H}^\epsilon_e(\m^\epsilon,U^\epsilon)$ and the formula of vector outer production \eqref{eqn:vec product},
The multiscale LLG equation \eqref{eqn:LLG system form 3} can be rewritten into a degenerate form by using the vector triple product formula \eqref{eqn:vec product}
\begin{equation}\label{eqn:LLG system form 2}
	\partial_t\m^\epsilon 
	+ \alpha \m^\epsilon \times 
	\big(\m^\epsilon \times \mathcal{H}^\epsilon_e(\m^\epsilon,U^\epsilon) \big) 
	+ \m^\epsilon \times \mathcal{H}^\epsilon_e(\m^\epsilon,U^\epsilon) 
	=0.
\end{equation}
Multiplying \eqref{eqn:LLG system form 2} by $\mathcal{H}^\epsilon_e(\m^\epsilon,U^\epsilon)$ and integrating over $(0,t)$, the energy dissipation identity can be given by
\begin{equation}\label{energy decay}
	\begin{aligned}
		\mathcal{G}_{\mathcal{L}}^\epsilon[\m^\epsilon(t),U^\epsilon(t)] + 
		\alpha \int_0^t \Vert \m^\epsilon \times \mathcal{H}^\epsilon_e(\m^\epsilon) \Vert_{L^2(\Omega)}^2 \d \tau 
		= 
		\mathcal{G}_{\mathcal{L}}^\epsilon[\m^\epsilon(0),U^\epsilon(0)],
	\end{aligned}
\end{equation}
The integrability of kinetic energy $\Vert \partial_t \m^\epsilon \Vert_{L^2(\Omega)}^2$ can by induced by \eqref{eqn:LLG system form 2} and \eqref{energy decay}
\begin{align}\label{integrable of kinetic energy}
	%\frac{\alpha}{1+\alpha^2}
	\int_0^t \Vert \partial_t \m^\epsilon \Vert_{L^2(\Omega)}^2 \d \tau &
	\le 
	C.
\end{align}
The energy identity \eqref{energy decay} implies the uniform boundedness of $\Vert \m^\epsilon \times \mathcal{A}_\epsilon\m^\epsilon \Vert_{L^2(\Omega)}^2$. However, due to the degeneracy caused by the outer product, it is not enough to prove the boundedness of $\Vert \mathcal{A}_\epsilon\m^\epsilon \Vert_{L^2(\Omega)}^2$. To handle this problem, the following lemma is introduced.
%
%We intend to derive a structure-preserve energy estimate of $\Vert \mathcal{A}_\epsilon\m^\epsilon \Vert_{L^2(\Omega)}^2$ based on the following two Lemmas. The first Lemma implies that $a^\epsilon_{ij} \partial_j \mathcal{H}^\epsilon_e(\m^\epsilon)$ on the left-hand side of \eqref{eqn:transform of precession term} can be replaced by only its projection to the normal plane of $\m^\epsilon$, i.e., $\m^\epsilon \times 
%(\m^\epsilon \times a^\epsilon_{ij} \partial_j \mathcal{H}^\epsilon_e(\m^\epsilon))$, with the low-order term bounded by $\Vert \mathcal{H}^\epsilon_e(\m^\epsilon) \Vert_{L^{3}(\Omega)}^{3}$. Furthermore, in the second Lemma, we control $\Vert \mathcal{H}^\epsilon_e(\m^\epsilon) \Vert_{L^{3}(\Omega)}^{3}$ by the degenerate term $\Vert \m^\epsilon \times \nabla \mathcal{H}^\epsilon_e(\m^\epsilon)\Vert_{L^{2}(\Omega)}^{2}$.

%Notice that Lemma \ref{lemma:estimate of the highest term} implies the high-order term in \eqref{eqn:transform of precession term} can be replaced by its projection, and the low-order terms are bounded from above by $\Vert \mathcal{H}^\epsilon_e(\m^\epsilon) \Vert_{L^{3}(\Omega)}^{3}$. 
\begin{lemma}\label{regularety}
	Let $\m^\epsilon\in L^2([0,T];H^3(\Omega))$ be a solution to \eqref{eqn: Periodic problem}, then there exists $T^*\in(0, T]$ independent of $\epsilon$, such that for $0\le t\le T^*$,
	\begin{equation*}
		\Vert \mathcal{A}_\epsilon \m^\epsilon  (t) \Vert_{L^{2}(\Omega)}^2 
		+
		\int_0^{t} \big\Vert \m^\epsilon \times \nabla \mathcal{H}^\epsilon_e(\m^\epsilon,U^\epsilon)  
		(\tau) \big\Vert_{L^{2}(\Omega)}^2 \d \tau  \le C.
	\end{equation*}
	Therefore, by Proposition \ref{lemma: regularity for epslon}, it holds
	\begin{equation*}
		\Vert \nabla \m^\epsilon  (\cdot, t)\Vert_{L^{6}(\Omega)}^2
		\le
		C,
	\end{equation*}
	where $C$ is a constant independent of $\epsilon$ and $t$.
	
\end{lemma}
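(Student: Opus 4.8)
We aim to prove a local-in-time, $\epsilon$-independent bound on the second-order energy $y(t) := \Vert \mathcal{H}^\epsilon_e(\m^\epsilon, U^\epsilon)(t)\Vert_{L^2(\Omega)}^2$. By \eqref{relation between A and H} this quantity is equivalent, up to $\epsilon$-independent constants, to $\Vert \mathcal{A}_\epsilon \m^\epsilon\Vert_{L^2(\Omega)}^2$, and once $y$ is bounded the desired estimate $\Vert \nabla\m^\epsilon\Vert_{L^6(\Omega)}^2 \le C$ follows immediately from Proposition \ref{lemma: regularity for epslon}. Thus the lemma reduces to producing a differential inequality for $y$ whose coefficients do not depend on $\epsilon$. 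The initial value $y(0)$ is itself $\epsilon$-independent: evaluating \eqref{cross product} at $t=0$ expresses $\mathcal{A}_\epsilon\m_{\mathrm{init}}^\epsilon$ through $\mathcal{A}^0\m_{\mathrm{init}}^0$ and $|\nabla\m_{\mathrm{init}}^\epsilon|^2$, both bounded by $C_{\mathrm{coe}}$ under Assumption \eqref{assumption: initial data}.

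To obtain the evolution of $y$, we differentiate $\Vert \mathcal{H}^\epsilon_e\Vert_{L^2}^2$ in time (rigorously justified by a Galerkin or difference-quotient approximation, which is where the assumed $H^3$ regularity enters). Writing $\mathbf{H} := \mathcal{H}^\epsilon_e(\m^\epsilon, U^\epsilon) = \mathcal{A}_\epsilon\m^\epsilon + \text{(lower-order terms)}$ and using the self-adjointness of $\mathcal{A}_\epsilon$ under the periodic boundary condition, the leading contribution is $-(\ba^\epsilon\nabla\mathbf{H},\, \nabla\partial_t\m^\epsilon)$. Substituting $\partial_t\m^\epsilon$ from the degenerate form \eqref{eqn:LLG system form 2} and expanding $\nabla\partial_t\m^\epsilon$, the top-order part of the damping term $-\alpha\,\m^\epsilon\times(\m^\epsilon\times\mathbf{H})$ produces, via the pointwise identity $\mathbf{w}\cdot\big(\m^\epsilon\times(\m^\epsilon\times\mathbf{w})\big) = -|\m^\epsilon\times\mathbf{w}|^2$ (with $|\m^\epsilon|=1$) and the coercivity $\ba^\epsilon\ge a_{\min}$, the coercive dissipation $+\alpha a_{\min}\Vert \m^\epsilon\times\nabla\mathbf{H}\Vert_{L^2}^2$ on the left-hand side, exactly the term in the lemma. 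The precession term $-\m^\epsilon\times\mathbf{H}$ yields a skew contribution that vanishes (for scalar $\ba^\epsilon$) or is lower order, while the remaining pieces carry factors of $\nabla\m^\epsilon$, controlled by $\Vert \nabla\m^\epsilon\Vert_{L^6}$ and hence by $y$ through Proposition \ref{lemma: regularity for epslon}, paired with $\mathbf{H}$ in $L^3$ or $L^2$. The lower-order parts of $\mathbf{H}$ (anisotropy, external field, and the stray field $\mu^\epsilon\nabla U^\epsilon$) are bounded by \eqref{estimate of Phi} and the $C^2$ hypotheses of Assumption \eqref{assumption: coefficient}, contributing only $\epsilon$-independent constants.

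The crux is to absorb the resulting supercritical terms, which we estimate by the interpolation Lemma \ref{lemma:estimate of H in L3}: it bounds $\Vert \mathbf{H}\Vert_{L^3}^3$ by $C_\delta(1 + \Vert \mathbf{H}\Vert_{L^2}^6) + \delta\Vert \m^\epsilon\times\nabla\mathbf{H}\Vert_{L^2}^2$, and choosing $\delta$ small enough to be swallowed by the coercive dissipation yields a Riccati-type inequality
\[
	\frac{\d}{\d t} y(t) + c\,\Vert \m^\epsilon\times\nabla\mathbf{H}(t)\Vert_{L^2(\Omega)}^2 \le C\big(1 + y(t)^3\big),
\]
with $c, C$ independent of $\epsilon$. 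Comparing with the scalar ODE $z' = C(1+z^3)$, $z(0)=y(0)$, which blows up only after a finite time determined by $y(0)$ and $C$, gives an $\epsilon$-independent $T^*\in(0,T]$ on which $y$ stays bounded; integrating over $[0,T^*]$ then also controls $\int_0^{T^*}\Vert \m^\epsilon\times\nabla\mathbf{H}\Vert_{L^2}^2\,\d\tau$. A standard continuation argument makes this precise, and Proposition \ref{lemma: regularity for epslon} finally converts the bound on $y$ into the stated $\Vert \nabla\m^\epsilon\Vert_{L^6}^2\le C$.

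The main obstacle is the degeneracy intrinsic to LLG: the energy identity \eqref{energy decay} controls only the transverse component $\m^\epsilon\times\mathbf{H}$, together with $\int_0^t\Vert \partial_t\m^\epsilon\Vert_{L^2}^2$ from \eqref{integrable of kinetic energy}, but not the full field $\mathbf{H}$, whose longitudinal part $\m^\epsilon\cdot\mathbf{H} = -\ba^\epsilon|\nabla\m^\epsilon|^2$ is genuinely quartic in $\nabla\m^\epsilon$ (cf.\ \eqref{eqn:nabla m dot nabla m}). Closing the estimate therefore hinges entirely on Lemma \ref{lemma:estimate of H in L3}, and the price paid is the cubic nonlinearity $y^3$, which is why the conclusion is only local in time. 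A second, more technical difficulty is to verify that every constant produced in the repeated integration by parts against the oscillatory operator $\mathcal{A}_\epsilon$, and in the elliptic bound of Proposition \ref{lemma: regularity for epslon}, is genuinely independent of $\epsilon$; this $\epsilon$-uniformity is precisely what guarantees that $T^*$ does not shrink as $\epsilon\to 0$.
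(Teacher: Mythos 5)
Your proposal is correct and follows essentially the same route as the paper: an energy estimate on the second-order quantity (your $y=\Vert\mathcal{H}^\epsilon_e\Vert_{L^2}^2$ is equivalent to the paper's $\Vert\mathcal{A}_\epsilon\m^\epsilon\Vert_{L^2}^2$ via \eqref{relation between A and H}), extraction of the coercive dissipation $\alpha a_{\min}\Vert\m^\epsilon\times\nabla\mathcal{H}^\epsilon_e\Vert_{L^2}^2$ from the damping term, absorption of the supercritical terms through Lemma \ref{lemma:estimate of H in L3} with small $\delta$, and closure by comparison with the ODE $z'=C(1+z^3)$ to obtain an $\epsilon$-independent $T^*$. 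The only substantive detail you gloss over that the paper handles explicitly is the time-derivative remainder $\frac{\d}{\d t}\int_\Omega\mathcal{A}_\epsilon\m^\epsilon\cdot\Gamma^\epsilon\,\d\bx$ arising from the lower-order part of $\mathcal{H}^\epsilon_e$, which is integrated in time and absorbed using \eqref{integrable of kinetic energy}.
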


\begin{proof}
	Applying $\nabla$ to \eqref{eqn:LLG system form 2} and multiplying it by $ \ba^\epsilon \nabla \mathcal{H}^\epsilon_e(\m^\epsilon,U^\epsilon)$, it leads to
	\begin{equation}\label{eqn:regu esti in 3d}
		\begin{aligned}
			& -
			\int_{\Omega} \nabla (\partial_t \m^\epsilon) \cdot \ba^\epsilon \nabla \mathcal{H}^\epsilon_e(\m^\epsilon,U^\epsilon) \d \bx \\
			= & 
			\ \alpha \int_{\Omega}  \nabla 
			\Big(\m^\epsilon\times\big(\m^\epsilon\times \mathcal{H}^\epsilon_e(\m^\epsilon,U^\epsilon)\big) \Big)
			\cdot \ba^\epsilon \nabla \mathcal{H}^\epsilon_e(\m^\epsilon,U^\epsilon)  \d \bx\\
			& +
			\sum_{i,j=1}^{n}
			\int_{\Omega} \frac{\partial}{\partial x_i} \m^\epsilon \times \mathcal{H}^\epsilon_e(\m^\epsilon,U^\epsilon) 
			\cdot a^\epsilon_{ij} \frac{\partial}{\partial x_j} \mathcal{H}^\epsilon_e(\m^\epsilon,U^\epsilon) \d \bx  
			=: 
			\mathcal{J}_1 + \mathcal{J}_2.
		\end{aligned}
	\end{equation}
	%Denote $\Gamma^\epsilon(\m^\epsilon) = \mathcal{H}^\epsilon_e(\m^\epsilon)  - \mathcal{A}_\epsilon\m^\epsilon$. 
	Denote $\Gamma^\epsilon(\m^\epsilon,U^\epsilon) = \mathcal{H}^\epsilon_e(\m^\epsilon,U^\epsilon)  - \mathcal{A}_\epsilon\m^\epsilon$. With integration by parts, the left-hand side of \eqref{eqn:regu esti in 3d} becomes
	\begin{equation*}
		\begin{aligned}
			& -
			\int_{\Omega} \nabla (\partial_t \m^\epsilon) \cdot \ba^\epsilon \nabla \mathcal{H}^\epsilon_e(\m^\epsilon,U^\epsilon) \d \bx 
			= 
			\int_{\Omega} \mathcal{A}_\epsilon (\partial_t \m^\epsilon) \cdot \big( \mathcal{A}_\epsilon\m^\epsilon + \Gamma^\epsilon(\m^\epsilon,U^\epsilon) \big) \d \bx,
		\end{aligned}
	\end{equation*}
	where the right-hand side can be rewritten as 
	%the last term in \eqref{eqn:regu esti in 3d 2} is bounded from below by
	\begin{equation*}\label{eqn:regu esti in 3d 2}
		\begin{aligned}
			\frac{1}{2}\frac{\d }{\d t}
			\int_{\Omega} \vert \mathcal{A}_\epsilon \m^\epsilon \vert^2 \d \bx 
			+
			\frac{\d }{\d t}
			\int_{\Omega} \mathcal{A}_\epsilon  \m^\epsilon \cdot \Gamma^\epsilon(\m^\epsilon,U^\epsilon) \d \bx  
			-
			\int_{\Omega} \mathcal{A}_\epsilon \m^\epsilon \cdot  \Gamma^\epsilon(\partial_t\m^\epsilon,\partial_tU^\epsilon) \d \bx.
		\end{aligned}
	\end{equation*}
	Then, the right-hand side of \eqref{eqn:regu esti in 3d} are considered. For $\mathcal{J}_1$, it can be  derived by swapping the order of mixed product
	\begin{equation*}\label{eqn:regu esti in 3d 3}
		\begin{aligned}
			\mathcal{J}_1
			=  -
			\alpha \sum_{i, j= 1}^n\int_{\Omega}   
			\Big(\m^\epsilon\times \frac{\partial}{\partial x_i} \mathcal{H}^\epsilon_e(\m^\epsilon,U^\epsilon)\Big) 
			\cdot 	a^\epsilon_{ij} \Big(\m^\epsilon\times 
			\frac{\partial}{\partial x_j}\mathcal{H}^\epsilon_e(\m^\epsilon,U^\epsilon)\Big)  \d \bx 
			+ F_1 ,
		\end{aligned}
	\end{equation*}
	where $F_1$ is the low-order term, and the first term on right-hand side is sign-preserved due to the uniform coerciveness of $\ba^\epsilon$ in Assumption \eqref{assumption: coefficient}.
	As for $\mathcal{J}_2$, applying \eqref{eqn: orthogonality decomposition} with $\mathbf{v} = a^\epsilon_{ij} \partial_j \mathcal{H}^\epsilon_e(\m^\epsilon)$, it leads to
	\begin{equation}\label{F_2}
		\begin{aligned}
			\mathcal{J}_2 = & \sum_{i,j=1}^{n}
			\int_{\Omega} \m^\epsilon \times\Big( \frac{\partial}{\partial x_i} \m^\epsilon \times \mathcal{H}^\epsilon_e(\m^\epsilon,U^\epsilon) \Big) 
			\cdot
			\Big( \m^\epsilon \times  
			a^\epsilon_{ij} \frac{\partial}{\partial x_j} \mathcal{H}^\epsilon_e(\m^\epsilon,U^\epsilon) \Big) \d \bx\\
			& - \sum_{i,j=1}^{n}
			\int_{\Omega} \Big( \m^\epsilon \times \mathcal{H}^\epsilon_e(\m^\epsilon,U^\epsilon) \cdot \frac{\partial}{\partial x_i} \m^\epsilon  \Big)
			\m^\epsilon  
			\cdot a^\epsilon_{ij} \frac{\partial}{\partial x_j} \mathcal{H}^\epsilon_e(\m^\epsilon,U^\epsilon) \d \bx.
		\end{aligned}
	\end{equation}
	Applying the vector triple product formula \eqref{eqn:vec product} to the first term, and performing integration by parts on the second term, \eqref{F_2} becomes
	\begin{equation*}\label{eqn:transform with F_1}
		\begin{aligned}
			\mathcal{J}_2 =& 2 \sum_{i,j=1}^{n}
			\int_{\Omega} \big( \m^\epsilon \cdot \mathcal{H}^\epsilon_e(\m^\epsilon,U^\epsilon) \big)
			\Big( \m^\epsilon \times  
			a^\epsilon_{ij} \frac{\partial}{\partial x_j} \mathcal{H}^\epsilon_e(\m^\epsilon,U^\epsilon)
			\cdot \frac{\partial}{\partial x_i} \m^\epsilon \Big) \d \bx + F_2\\
			\le & C
			\Vert \nabla \m^\epsilon \Vert_{L^{6}(\Omega)}^6
			+
			C
			\Vert \mathcal{H}^\epsilon_e(\m^\epsilon,U^\epsilon) \Vert_{L^{3}(\Omega)}^3
			+
			\delta \Vert \m^\epsilon \times \nabla \mathcal{H}^\epsilon_e(\m^\epsilon,U^\epsilon) \Vert_{L^{2}(\Omega)}^{2} + F_2,
		\end{aligned}
	\end{equation*}
	%	the terms in $F_2$ can be estimated by the Young's inequality
	%	\begin{equation}
		%	\begin{aligned}
			%	F_2
			%	\le &
			%	C
			%	\Vert \nabla \m^\epsilon \Vert_{L^{6}(\Omega)}^6
			%	+
			%	C
			%	\Vert \mathcal{H}^\epsilon_e(\m^\epsilon) \Vert_{L^{3}(\Omega)}^3
			%	+ \delta
			%	\Vert \m^\epsilon \times \nabla \mathcal{H}^\epsilon_e(\m^\epsilon) \Vert_{L^{2}(\Omega)}^{2}.
			%	\end{aligned}
		%	\end{equation}
	where $F_2$ is the low-order term. It can be proved that $F_i\ (i=1,2)$ satisfy the following estimate by utilizing \eqref{relation between A and H} and H\"{o}lder's inequality 
    %$F_i$, $i=1,2$ satisfies by \eqref{relation between A and H} and H\"{o}lder's inequality
	\begin{equation*}\label{bound of F_1 process result}
		\begin{aligned}
			F_i
			\le & C + C
			\Vert \nabla \m^\epsilon \Vert_{L^{6}(\Omega)}^6
			+
			C
			\Vert \mathcal{H}^\epsilon_e(\m^\epsilon,U^\epsilon) \Vert_{L^{3}(\Omega)}^3.
		\end{aligned}
	\end{equation*}
	Substituting the above results into \eqref{eqn:regu esti in 3d}, applying estimate \eqref{eqn:nabla m dot nabla m} and Lemma \ref{lemma:estimate of H in L3}, we can finally derive
 
	\begin{equation}\label{esimate of regular in 3d}
		\begin{aligned}
			& \frac{1}{2}\frac{\d }{\d t}
			\Vert \mathcal{A}_\epsilon  \m^\epsilon \Vert_{L^2(\Omega)}^2 
			+
			(\alpha a_{\mathrm{min}} - C\delta) \Vert \m^\epsilon \times \nabla \mathcal{H}^\epsilon_e(\m^\epsilon,U^\epsilon) \Vert_{L^{2}(\Omega)}^{2}  \\
			\le & C +
			C \Vert \mathcal{A}_\epsilon  \m^\epsilon \Vert_{L^2(\Omega)}^6 
			+ C
			\Vert \partial_t \m^\epsilon \Vert_{L^2(\Omega)}^2
			-
			\frac{\d }{\d t}
			\int_{\Omega} \mathcal{A}_\epsilon  \m^\epsilon \cdot \Gamma^\epsilon(\m^\epsilon,U^\epsilon) \d \bx.
		\end{aligned}
	\end{equation}
	%	\begin{equation}\label{eqn:regu esti in 3d 3}
		%	\begin{aligned}
			%	& \sum_{i,j=1}^{n}
			%	\int_{\Omega} \partial_i \m^\epsilon \times \mathcal{H}^\epsilon_e(\m^\epsilon) 
			%	\cdot a^\epsilon_{ij} \partial_j \mathcal{H}^\epsilon_e(\m^\epsilon) \d \bx\\
			%	\le &
			%	C
			%	\Vert \nabla \m^\epsilon \Vert_{L^{6}(\Omega)}^6
			%	+
			%	C
			%	\Vert \mathcal{H}^\epsilon_e(\m^\epsilon) \Vert_{L^{3}(\Omega)}^3
			%	+ \delta
			%	\Vert \m^\epsilon \times \nabla \mathcal{H}^\epsilon_e(\m^\epsilon) \Vert_{L^{2}(\Omega)}^{2}.
			%	\end{aligned}
		%	\end{equation} 
	Integrating \eqref{esimate of regular in 3d} over $[0,t]$ and utilizing \eqref{integrable of kinetic energy} along with the following inequality
	\begin{align*}
		\int_{\Omega} \mathcal{A}_\epsilon  \m^\epsilon \cdot \Gamma^\epsilon(\m^\epsilon,U^\epsilon) \d \bx & 
		\le
		C\Vert \Gamma^\epsilon(\m^\epsilon)  \Vert_{L^{2}(\Omega)}^{2}
		+ \frac{1}{4}\Vert \mathcal{A}_\epsilon  \m^\epsilon \Vert_{L^{2}(\Omega)}^{2},
	\end{align*}
	it can be proved that for any $t\in (0,T]$
	\begin{equation}\label{esimate of regular by integrating}
		\begin{aligned}
			\frac{1}{4} \Vert \mathcal{A}_\epsilon  \m^\epsilon (t) \Vert_{L^2(\Omega)}^2 
			\le & C +
			C \int_0^t \Vert \mathcal{A}_\epsilon  \m^\epsilon (\tau)\Vert_{L^2(\Omega)}^6 \d \tau,
		\end{aligned}
	\end{equation}
	where $C$ depends on $\Vert\mathcal{A}_\epsilon \m^\epsilon(0)\Vert_{L^{2}(\Omega)}$, $\mathcal{G}_{\mathcal{L}}^\epsilon[\m^\epsilon(0),U^\epsilon(0)]$. With \cref{lemma: regularity for epslon} and Assumption \eqref{assumption: initial data}, it can be proved that $C$ is independent of $\epsilon$ and $t$.
    %by Assumption \eqref{assumption: initial data} and Proposition \ref{lemma: regularity for epslon}.
    Let $F(t)$ denote the right-hand side of \eqref{esimate of regular by integrating}, it follows that
    %Denote the right-hand side of \eqref{esimate of regular by integrating} by $F(t)$ and write
	\begin{equation*}
		\frac{\d}{\d t} F(t) \le C F^3(t).
	\end{equation*}
	By the Cauchy-Lipshitz-Picard Theorem \cite{BB:S:2011} and the comparison principle, there exists $T^*\in (0,T]$ independent of $\epsilon$, such that $F(t)$ is uniformly bounded on $[0,T^*]$. Thus, $\Vert \mathcal{A}_\epsilon  \m^\epsilon (t) \Vert_{L^2(\Omega)}^2 $ is uniformly bounded by \eqref{esimate of regular by integrating}.
\end{proof}

Finally, \cref{regularety 3} can be derived by \cref{regularety}. The proof is completed.

%\section*{Acknowledgments}
%We would like to acknowledge the assistance of volunteers in putting together this example manuscript and supplement.

\bibliographystyle{siamplain}
%\bibliography{references}
\bibliography{ref}
\end{sloppypar}
\end{document}